\DeclareMathAlphabet{\pazocal}{OMS}{zplm}{m}{n}
\numberwithin{equation}{section}
\theoremstyle{plain}
\newtheorem{theorem}{Theorem}[section]
\newtheorem{lemma}[theorem]{Lemma}
\newtheorem{proposition}[theorem]{Proposition}
\theoremstyle{definition}
\newtheorem{definition}[theorem]{Definition}
\newtheorem{example}[theorem]{Example}
\theoremstyle{remark}
\newtheorem{remark}[theorem]{Remark}
\newcommand{\Deltauno}{\Delta_1}
\newcommand{\Deltados}{\Delta_2}
\title[Derivations in Dialgebras]{Derivations and Biderivations in Dialgebras}
\author{Gabriel Gustavo Restrepo-S\'{a}nchez}
\author{Jos\'e Gregorio Rodr\'{i}guez-Nieto}
\author{Olga Patricia Salazar-D\'{i}az}
\author{Andr\'{e}s Sarrazola-Alzate}
\author{Ra\'{u}l Vel\'asquez}
\date{\today}
\subjclass[2020]{Primary 17A32
; Secondary 17A36
}
\keywords{dialgebras, derivations, diderivations, biderivations, Leibniz-type structures}
\begin{document}

\begin{abstract}
The concepts of derivations and right derivations for Leibniz algebras and $K$-B quasi-Jordan algebras naturally arise from the inner derivations determined by their algebraic structures. In this paper we introduce the corresponding analogues for dialgebras, which we call diderivations, and examine their properties in relation to antiderivations and right derivations. Our approach is based on the study of multiplicative operators and on the construction of the Leibniz algebra generated by biderivations, thereby providing a systematic framework that unifies several types of derivation-like operators. In addition to the general theory, we present a complete classification of the spaces of diderivations for dialgebras of dimensions two and three, obtained through explicit computations. These low-dimensional results not only exemplify the general constructions but also reveal structural patterns that inform possible extensions to higher dimensions and more intricate algebraic contexts.
\end{abstract}

\maketitle

\section*{Introduction}

The study of derivations has long played a central role in algebra, not only because they describe intrinsic symmetries of a structure, but also because they govern its deformations and extensions. In the framework of Leibniz algebras, two distinct notions naturally arise: classical derivations and antiderivations. Their interaction leads to the construction of the Leibniz algebra of biderivations, which itself contains the subalgebra of inner biderivations as a cornerstone. A similar picture appears in the category of $K$-$B$ quasi-Jordan algebras, where biderivations have also been investigated \cite{Velasquez2009}. In that setting, the construction involves classical derivations together with right derivations, and in fact, it has been established that these two notions coincide - a phenomenon that echoes the situation observed in Leibniz algebras. In both cases, inner derivations act as the fundamental building blocks for the algebra of biderivations, underscoring their structural importance.  

Dialgebras, introduced by J.~L.~Loday in the early 1990s, offer a natural environment to broaden these ideas. Defined by two bilinear operations tied together by compatibility relations, dialgebras stand at a crossroads between associative and Leibniz structures: the commutator of their operations recovers precisely a Leibniz algebra. This fact situates dialgebras as a universal enveloping context in which Leibniz algebras emerge, much in the same way as associative algebras underpin Lie theory. As a result, dialgebras provide fertile ground for investigating derivations, cohomological invariants, and homological phenomena. The role of derivations within dialgebras is particularly crucial, since they capture symmetries and encode the mechanisms by which such algebras deform and interact with one another.  

Yet, in contrast with Leibniz algebras, the operator theory of dialgebras remains largely undeveloped. To this day, only the classical definition of derivations with respect to both products has been explored, as originally formulated by Loday. The work of Lin and Zhang \cite{Lin2010} represents a first step, focusing on derivations of the polynomial dialgebra $K[x,y]$, but no systematic effort has been devoted to developing analogues of antiderivations or right derivations in this context. This gap in the literature naturally raises the question: can dialgebras support operator structures as rich as those known for Leibniz algebras?  

The present article aims to take a step in this direction. On the one hand, we introduce and study a special type of derivations in dialgebras, namely those defined by multiplicative operators - the inner derivations - and extend the framework to incorporate a new operator class, which we call diderivations. On the other hand, we go beyond their mere definition to examine how derivations and diderivations act on some of the most intrinsic invariants of dialgebra theory: the center, the annihilator, and the bar-unit set. This dual perspective provides not only a broader understanding of how operator spaces shape the internal life of a dialgebra, but also a way of situating these results within the wider landscape of non-associative algebra.  

In this sense, our study highlights the place of dialgebras as a meeting point where familiar algebraic ideas - derivations, inner structures, operator algebras - are reinterpreted and extended. At the same time, the introduction of diderivations opens up new paths of exploration, bridging known constructions in Leibniz algebras with novel phenomena in the dialgebraic world. We hope that these contributions will not only enrich the theory of dialgebras, but also foster further dialogue between algebraic frameworks where symmetries and derivations are at the core.  

\subsection*{Organization of the paper}

This paper is organized as follows. Section~\ref{Sec:preliminaries} is devoted to recalling the basic background on Leibniz algebras and their close relationship with dialgebras. Special emphasis is placed on the role of derivations, antiderivations, and biderivations, as these notions will be fundamental in the constructions developed later. We also revisit the seminal work of Loday~\cite{Loday1993}, which established the connection between Leibniz algebras and dialgebras, thereby providing the conceptual framework in which our results are situated.  

In Section~\ref{Sec:Der_&_Dider}, we introduce the notion of \emph{diderivations}, understood as compatible structures obtained by combining antiderivations and right derivations on a dialgebra. Within this section, we establish two structural results of central importance: first, that the space of derivations $\pazocal{D}er(\mathcal{D})$ naturally forms a Lie subalgebra of $\mathfrak{gl}(\mathcal{D})$ (Lemma \ref{Lem:Der_subliealg}); and second, that the space of inner derivations $\mathcal{I}nn(\mathcal{D})$ constitutes an ideal inside $\pazocal{D}er(\mathcal{D})$ (Theorem \ref{thm:Inn_ideal_Der}). These results provide a solid algebraic foundation for the subsequent classification of diderivations.  

Section~\ref{Sec:Cal_Low} is concerned with the \emph{explicit classification of the vector spaces of diderivations in low-dimensional cases}. More precisely, we present a complete classification for dialgebras of dimensions two and three. This extends and refines the computational approaches proposed in~\cite{Abubakar2014, Rikhsiboev2014}, where algorithms for the determination of derivations were tested in the same range of dimensions. Our results therefore contribute to completing the picture of structural operators in small dialgebras, providing the first systematic description of diderivations in these cases.  

In Section~\ref{Sec:bider}, we turn to the study of \emph{biderivations}. We construct the algebra $\pazocal{B}ider(\mathcal{D})$ and prove that it inherits a nontrivial Leibniz structure. More precisely, we show that for any dialgebra $\mathcal{D}$, the pair
\[
\bigl(\pazocal{B}ider(\mathcal{D}), \langle \bullet, \bullet \rangle \bigr)
\]
is canonically endowed with the structure of a Leibniz algebra (Theorem \ref{thm:Bider_Leib}). This result highlights the robustness of the interaction between dialgebraic operators and Leibniz structures, and situates biderivations as natural algebraic objects in their own right. 

In Section~\ref{Sec:example}, we present a concrete example of a dialgebra and provide the classification of both its derivations and diderivations (Theorem \ref{thm:dider_identity} and Lemma \ref{lem:inner_dider}). More precisely, we analyze the dialgebra introduced by L.~Lin and Y.~Zhang in~\cite{Lin2010}, which is generated by polynomials in two commuting variables $K[x,y]$.  

Finally, in Section~\ref{Sec:conclusions}, we summarize the main classification results, underline the implications of the Leibniz structure on biderivations, and indicate several directions for further research. Among these, we stress the potential extension of our classification methods to higher-dimensional dialgebras, as well as the exploration of homological and cohomological aspects of the algebras $\pazocal{D}er(\mathcal{D})$, $\mathcal{I}nn(\mathcal{D})$, and $\pazocal{B}ider(\mathcal{D})$.

\section{Preliminaries}\label{Sec:preliminaries}
 
 In this section, we present a brief review of Leibniz algebras, focusing on the concepts of derivations, antiderivations, and biderivations. We also revisit the concept of dialgebras and their well-known connection to Leibniz algebras. For more details, see \cite{Loday1993}.

Throughout the remainder of this paper, we work over fields of characteristic  zero.
\begin{definition}
    A \textbf{left Leibniz algebra} $\mathcal{L}$ consists of a $K$-vector space endowed with a bilinear product $[\cdot , \cdot]:\mathcal{L}\times \mathcal{L}\rightarrow \mathcal{L}$, called the \textbf{Leibniz bracket}, satisfying the so-called left Leibniz identity:
 
        \begin{equation*}\label{leib}
            [x,[y,z]]=[[x,y],z]-[[x,z],y],
        \end{equation*} 
    for all $x,y,z \in \mathcal{L}$. Similarly, we define a \textbf{right Leibniz algebra}.
\end{definition}
 
  In this paper, we use the term Leibniz algebra to mean a left Leibniz algebra, and the term Leibniz identity to refer to the corresponding left Leibniz identity. We remark for the reader that if the Leibniz bracket is skew-symmetric then $\mathcal{L}$ is a Lie algebra, because the Jacobi identity becomes the Leibniz identity. In other words, every Lie algebra is, in particular, a Leibniz algebra, which is therefore a skew-symmetric version of the Lie algebra.

\begin{definition}\label{bider}
    Let $\mathcal{L}$ be a Leibniz algebra. A
    {\bf{derivation}} over $\mathcal{L}$ is a linear operator $d:\mathcal{L}\to \mathcal{L}$ that satisfies the identity
        \begin{equation*}
            d([x,y])=[dx,y]+[x,dy],
        \end{equation*}
    for all $x,y\in\mathcal{L}$. Besides, an {\bf{antiderivation}} over $\mathcal{L}$ is a linear operator $D:\mathcal{L}\to \mathcal{L}$ that satisfies the identity
        \begin{equation*}
            D([x,y])=[Dx,y]-[Dy,x]
        \end{equation*}
    for all $x,y\in\mathcal{L}$.
\end{definition} 
 
    If $\mathcal{L}$ is a Lie algebra, then any derivation is an antiderivation and reciprocally.

\begin{definition}
    A \textbf{biderivation} on a Leibniz algebra is a pair $(d,D)$ consisting of a derivation $d$ and an antiderivation $D$, which satisfies the identity:
        \begin{equation*}
            [x,dy]=-[x,Dy]
        \end{equation*}
    for all $x,y\in \mathcal{L}$.
\end{definition}
 
    Let us now consider the adjoint operators and their relations with derivations over Leibniz algebras.

\begin{definition}\label{biderinner}
    Let $\mathcal{L}$ be a Leibniz algebra. For every $x\in \mathcal{L}$ we define linear operators, called \textbf{adjoint operators}, $ad_x : \mathcal{L}\rightarrow \mathcal{L}$ and $Ad_{x}: \mathcal{L}\rightarrow \mathcal{L}$ by
        \begin{equation*}
            ad_x(y)\coloneqq [y,x]
                \hspace{0.3 cm}
                \text{and}
                \hspace{0.3 cm}
            Ad_x(y)\coloneqq [x,y]
        \end{equation*}
    for all $y\in\mathcal{L}$.
\end{definition}

\begin{remark}
    From the definition of the Leibniz bracket, it follows that $ad_x$ is a derivation and $Ad_x$ is an antiderivation. Moreover, the pair $(ad_x,\; Ad_x)$ forms a biderivation of $\mathcal{L}$, which is called an \textbf{inner biderivation} over $\mathcal{L}$ associated with $x$.
\end{remark}
 
    Let $\pazocal{B}ider(\mathcal{L})$ denote the set of biderivations on $\mathcal{L}$. If we define the bracket:
        \begin{equation*}
            [(d,D),(d',D')]:=(dd'-d'd,Dd'-d'D),
        \end{equation*}
    for $(d,D),(d',D')\in \pazocal{B}ider(\mathcal{L})$, then $(\pazocal{B}ider(\mathcal{L}), [\cdot , \cdot])$ becomes a Leibniz algebra and the canonical map
        \begin{equation*}
            \begin{matrix}
                \mathcal{L} & \rightarrow & \pazocal{B}ider(\mathcal{L})\\
                \\
                x & \mapsto & (ad_x, Ad_y)
            \end{matrix}
        \end{equation*}
    is a homomorphism of Leibniz algebras. We will denote the set of inner derivations over $\mathcal{L}$ by $\mathcal{I}nn(\mathcal{L})$. It turns out that $\mathcal{I}nn(\mathcal{L})$ is a Leibniz subalgebra of the Leibniz algebra $\pazocal{B}ider(\mathcal{L})$.

\begin{definition}
    A \textbf{diassociative algebra}, or a \textbf{dialgebra} for short, is a $K$-vector space $\mathcal{D}$ endowed with two associative linear applications
        \begin{equation*}
            \vdash : \mathcal{D}\otimes \mathcal{D} \rightarrow \mathcal{D}
                \hspace{0.3 cm}
                \text{and}
                \hspace{0.3 cm}
            \dashv : \mathcal{D}\otimes \mathcal{D} \rightarrow \mathcal{D}
        \end{equation*}
 
    satisfying the following properties:
        
        \begin{equation}\label{D3}
            x\dashv (y \dashv z)=x\dashv (y \vdash z),
        \end{equation}
        \begin{equation}\label{D4}
            (x\dashv y )\vdash z=(x\vdash y) \vdash z,
        \end{equation}
        \begin{equation}\label{D5}
            x\vdash (y \dashv z)=(x\vdash y) \dashv z,
        \end{equation}
    for all $x,y,z\in \mathcal{D}$.
\end{definition}

\begin{remark}
    In general, the identity \( x \dashv (y \vdash z) = (x \dashv y) \vdash z \) does not hold, in contrast to the equality given in equation~\ref{D5} of the previous definition.

\end{remark}

\begin{definition}
    We say that $e\in \mathcal{D}$ is a bar unit if $e\vdash x = x\dashv e = x$, for all $x\in \mathcal{D}$.
\end{definition}
 
    A bar unit is not necessarily unique. The set of all bar units is called the \textbf{halo} and is denoted by $\mathcal{H}(\mathcal{D})$. A \textbf{unital dialgebra} is a dialgebra with a fixed bar unit $e$. Since from equation \ref{D5} if $e \dashv x=x$ or $x \vdash e=x$, for $x\in \mathcal{D}$, then $\vdash=\dashv$, and therefore $\mathcal{D}$ is an associative algebra, we will always consider units on the bar side.

\begin{example} (Dialgebras from differentiable algebras)
    Let $(\pazocal{A}, d)$ be a differential associative algebra, that is, an associative algebra $\pazocal{A}$ together with a linear map $d:\pazocal{A}\rightarrow \pazocal{A}$ such that $d(ab) = d(a)b+ad(b)$, for all  $a,b\in\pazocal{A}$, and $d^2=0$. Defining over $\pazocal{A}$ the products $x\dashv y = xd(y)$ and $x\vdash y = d(x)y$, then $(\pazocal{A},\vdash,\dashv)$ is a dialgebra. If the associative algebra $\pazocal{A}$ is unital, with unit $1$, then $d(1)=0$, which means that $1$ is not a bar unit of $(\pazocal{A},\vdash, \dashv)$. A bar unit in such dialgebra is an element $x\in \pazocal{A}$ such that $d(x) = 1$, but this element does not necessarily exist in $\pazocal{A}$.
\end{example}

\begin{example}
 ($\varphi$-dialgebras) Let $V$ be a $K$-vector space and $\varphi\in V^*\setminus\{0\}$ a  functional in the algebraic dual of $V$. There exists a dialgebra structure on $V$ induced by $\varphi$ (\cite[Lemma 2.1]{Ongay2007}). This is defined by:
		\begin{eqnarray*}
			\nu\vdash \omega\coloneqq \varphi(\nu)\omega
			\hspace{0.5 cm}
			\text{ and }
			\hspace{0.5 cm}
			\nu\vdash \omega\coloneqq \nu\varphi(\omega).
		\end{eqnarray*}
	Such a dialgebra is called a $\varphi$-dialgebra and it is denoted by $V_{\varphi}$.
\end{example}

\begin{example} (Dialgebras from group actions)
    Let $f(x)\in K[x]$. Suppose that $f(x)$ is not irreducible with a root $\alpha\in K$, and let $g(x)$ be an irreducible factor of $f(x)$. Let us consider $K_{f}$ and $K_g$ the splitting fields of $f$ and $g$, respectively, in such a way that we have a tower of extensions $K\subseteq K_g\subseteq K_f$. We assume $\mathcal{R}(g)$ is the set of roots of $g$ and $\mathcal{R}_{K_f}(g)$ the $K_f$-vector space generated by $\mathcal{R}(g)$. The Galois group $\mathcal{G}al(K_g/K)$ acts transitively on $\mathcal{R}(g)$ and by the isomorphism extension theorem, the group $\mathcal{G}al(K_f/K)$ does it as well.
	
	Let $V=Vect_{K_f}\langle \mathcal{R}(g),\alpha \rangle$. Then $\alpha$ is a fixed point and, by the previous paragraph,  $\mathcal{G}al(K_f/K)$ acts transitively on $V\setminus \{\alpha\}=\mathcal{R}_{K_f}(g)$. By \cite[Example 4.2]{Kinyon2007} $\mathcal{D}\coloneqq V\times\mathcal{G}al(K_f/K)$ carries the natural dialgebra structure:
        \begin{equation*}
			(\nu, g)\vdash (\omega,h)\coloneqq (g\omega, gh)
			\hspace{0.5 cm}
			\text{ and }
			\hspace{0.5 cm}
			(\nu, g)\vdash (\omega,h) \coloneqq (\nu, gh)
		\end{equation*}
    for all $\nu,\omega \in V$ and $h,g\in \mathcal{G}al(K_f/K)$.
\end{example}
 
    We now introduce some structural aspects of dialgebras, as well as the notions of ideals and subdialgebras, which are analogous to those in classical algebraic structures.
\begin{definition}
    A \textbf{two-sided ideal} $\pazocal{I}$ of a dialgebra $\mathcal{D}$ is a subspace of $\mathcal{D}$ such that $x\vdash y$ and $x\dashv y$ are in $\pazocal{I}$, wherever $x\in\pazocal{I}$ or $y\in\pazocal{I}$. It is clear that the quotient $\mathcal{D}/\pazocal{I}$ has a canonical dialgebra structure, and the kernel of any morphism of dialgebras is an ideal of the source dialgebra.
 
    We say that $\mathcal{B}$ is a \textbf{subdialgebra} if 
    $x\vdash y \in \mathcal{B}$ and $x\dashv y \in \mathcal{B}$, for all $x,y\in \mathcal{B}$.
\end{definition}
 
    Let us now review the definition of the annihilator of a dialgebra, which will play a fundamental roll in this work.

\begin{definition} \label{Dann}
    Let $\mathcal{D}$ be a dialgebra. Let us consider the vector subspace generated by all the elements of the form $a\dashv b - a\vdash b$, for all $a,b\in \mathcal{D}$. We denote this subspace by $\mathcal{D}^{ann}$, i.e.,
        \begin{equation*}
            \mathcal{D}^{ann}\coloneqq \langle a\dashv b - a\vdash b|\, a,b\in \mathcal{D} \rangle
        \end{equation*}
    and we call it the \textbf{annihilator} of $\mathcal{D}$.
\end{definition}
 
    In \cite{Loday2001} J. L. Loday shows that $\mathcal{D}^{ann}$ is an ideal of $\mathcal{D}$. Moreover, the quotient dialgebra $\mathcal{D}_{as}:=\mathcal{D}/\mathcal{D}^{ann}$ is actually an associative algebra, and $\mathcal{D}^{ann}$ is the smallest ideal that satisfies this property. On the other hand, if $\pazocal{A}$ is an associative algebra, then $\pazocal{A}^{ann} = (0)$ and therefore $(\pazocal{A}_{Di})_{as} = \pazocal{A}$, where $\pazocal{A}_{Di}$ is the canonical dialgebra induced by $\pazocal{A}$. Thus, we may conclude that the functor $Di: \textbf{As}\rightarrow \textbf{Dias}$, where $Di(\pazocal{A})=\pazocal{A}_{Di}$, admits a left adjoint functor $as:\textbf{Dias}\rightarrow \textbf{As}$. In fact, let $\mathcal{D}$ be a dialgebra and let $\pazocal{A}$ be an associative algebra. The dialgebra $\pazocal{A}_{Di}$ associated to $\pazocal{A}$, is defined by the following operators:
\[
x \dashv y = xy = x \vdash y \quad \text{for all } x,y \in \pazocal{A}.
\]

Let $\varphi: \mathcal{D} \rightarrow \pazocal{A}_{Di}$ be a homomorphism of dialgebras. Since $\pazocal{A}_{Di}$ satisfies $x \dashv y = x \vdash y$, the image of $\varphi$ must satisfy:
\[
\varphi(x \dashv y - x \vdash y) = \varphi(x) \dashv \varphi(y) - \varphi(x) \vdash \varphi(y) = 0.
\]
Thus, $\mathcal{D}^{ann} \subseteq \ker \varphi$. Then there exists a unique map $\hat{\varphi}: \mathcal{D}_{as} \rightarrow \pazocal{A}$ such that $\varphi = \iota \circ \hat{\varphi}$, where $\iota: \mathcal{D} \to \mathcal{D}_{as}$ is the canonical projection. In $\mathcal{D}_{as}$, the multiplication is given by $[x][y] := [x \dashv y] = [x \vdash y]$, for all $x,y\in \mathcal{D}$. Since $\varphi$ is a dialgebra morphism, we have that
\[
\hat{\varphi}([x][y]) = \hat{\varphi}([x \dashv y]) = \varphi(x \dashv y) = \varphi(x) \dashv \varphi(y) = \varphi(x) \cdot \varphi(y) = \hat{\varphi}([x]) \cdot \hat{\varphi}([y]),
\]
so $\hat{\varphi}$ is an algebra homomorphism. This construction yields the map
\[
\Phi: \operatorname{Hom}_{\mathbf{Dias}}(\mathcal{D}, \pazocal{A}_{Di}) \longrightarrow \operatorname{Hom}_{\mathbf{As}}(\mathcal{D}_{as}, \pazocal{A}), \quad \varphi \mapsto \hat{\varphi}.
\]

Reciprocally, if $\psi: \mathcal{D}_{as} \rightarrow \pazocal{A}$ is an algebra homomorphism, then $\tilde{\psi}: \mathcal{D} \rightarrow \pazocal{A}_{Di}$ defined by
\[
\tilde{\psi}(x) := \psi([x]).
\]
is a dialgebra homomorphism, this is because the operations in $\pazocal{A}_{Di}$ are defined identically as multiplications in $\pazocal{A}$. This produces the inverse map of $\Phi$, so we have a bijection
\[
\operatorname{Hom}_{\mathbf{Dias}}(\mathcal{D}, \pazocal{A}_{Di}) \cong \operatorname{Hom}_{\mathbf{As}}(\mathcal{D}_{as}, \pazocal{A}).
\]

This natural bijection proves that the functor
\[
as: \mathbf{Dias} \rightarrow \mathbf{As}, \quad \mathcal{D} \mapsto \mathcal{D} / \mathcal{D}^{ann}
\]
is left adjoint to the functor $Di: \mathbf{As} \rightarrow \mathbf{Dias}$. See \cite{Loday2001} for more details. 

    We now present an example of how dialgebras can be constructed using the concept of bimodules. Indeed, it can be shown that every dialgebra arises in this way. Thereby, dialgebras can be classified in terms of an associative algebra equipped with a bimodule structure. See \cite{Frabetti2001} for further details.

\begin{example}\label{dimodular}
    Let $\pazocal{A}$ be an associative algebra and $\pazocal{M}$ be a bimodule. If $f:\pazocal{M}\rightarrow\pazocal{A}$ is an $\pazocal{A}$-bimodular application, then we can define a dialgebra structure on $\pazocal{M}$ by declaring
        \begin{equation*}
            m\dashv n:=mf(n)
                \hspace{0.3 cm}
                \text{and}
                \hspace{0.3 cm}
            m\vdash n:=f(m)n
        \end{equation*}
    for all $m,n\in\pazocal{M}$.
\end{example} 
 
 In general, we have the following proposition.   

\begin{proposition}\label{Frabetti}
    Let $\mathcal{D}$ be a dialgebra and $\mathcal{D}_{as}$ be its canonical associative algebra. There exist a $\mathcal{D}_{as}$-bimodule structure over $\mathcal{D}$ and a morphism of $\mathcal{D}_{as}$-bimodules $\varphi:\mathcal{D}\rightarrow\mathcal{D}_{as}$ such that the dialgebra structure of $\mathcal{D}$ can be recovered by defining $a\dashv b \coloneqq a\cdot \varphi(b)$ and $a\vdash b = \varphi(a)\cdot b$.
\end{proposition}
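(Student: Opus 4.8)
The plan is to build everything out of the canonical projection $\mathcal{D}\to\mathcal{D}_{as}=\mathcal{D}/\mathcal{D}^{ann}$, and to show that the three compatibility identities \ref{D3}, \ref{D4}, \ref{D5} are exactly what is needed to make the construction coherent. Writing $[x]$ for the class of $x\in\mathcal{D}$ in $\mathcal{D}_{as}$, I would define a left and a right action of $\mathcal{D}_{as}$ on $\mathcal{D}$ by
\[
[x]\cdot a := x\vdash a,\qquad a\cdot[x] := a\dashv x,
\]
and take $\varphi:\mathcal{D}\to\mathcal{D}_{as}$ to be the quotient map $a\mapsto[a]$.

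The first and most delicate step is to check that these one-sided actions are well defined, that is, independent of the chosen representative of $[x]$. Since $\mathcal{D}^{ann}$ is spanned by the elements $u\dashv v-u\vdash v$, this reduces to showing that $(u\dashv v-u\vdash v)\vdash a=0$ and $a\dashv(u\dashv v-u\vdash v)=0$ for all $a,u,v\in\mathcal{D}$; but these are precisely identities \ref{D4} and \ref{D3}, respectively. I expect this to be the conceptual heart of the argument, since it is here that the axioms of a dialgebra are genuinely used; the remaining verifications are comparatively mechanical.

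Next I would verify the bimodule axioms. Associativity of the left action, $([x][y])\cdot a=[x]\cdot([y]\cdot a)$, follows on recalling that multiplication in $\mathcal{D}_{as}$ is $[x][y]=[x\dashv y]=[x\vdash y]$: the left-hand side is $(x\dashv y)\vdash a=(x\vdash y)\vdash a$ by \ref{D4}, and this equals $x\vdash(y\vdash a)$ by associativity of $\vdash$. Symmetrically, associativity of the right action is associativity of $\dashv$. The mixed associativity $([x]\cdot a)\cdot[y]=[x]\cdot(a\cdot[y])$ becomes $(x\vdash a)\dashv y=x\vdash(a\dashv y)$, which is exactly \ref{D5}. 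Hence $\mathcal{D}$ is a $\mathcal{D}_{as}$-bimodule.

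Finally I would check that $\varphi$ is a bimodule morphism and that the products are recovered. The morphism property is immediate: $\varphi([x]\cdot a)=[x\vdash a]=[x][a]=[x]\cdot\varphi(a)$, and likewise on the right, using the description of the product in $\mathcal{D}_{as}$. For the recovery of the structure, $a\cdot\varphi(b)=a\cdot[b]=a\dashv b$ and $\varphi(a)\cdot b=[a]\cdot b=a\vdash b$, which is the asserted description. Assembling these steps yields the proposition, with the only substantive input being the well-definedness of the actions, where identities \ref{D3} and \ref{D4} are indispensable.
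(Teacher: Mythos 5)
Your proof is correct and is the standard construction that the paper itself leaves implicit: the paper states Proposition~\ref{Frabetti} without proof, deferring to Frabetti's work, but your argument — well-definedness of the actions modulo $\mathcal{D}^{ann}$ via identities \eqref{D3} and \eqref{D4}, mixed associativity via \eqref{D5}, and $\varphi$ the quotient map — is exactly the inverse of the paper's Example~\ref{dimodular} and recovers the products as claimed. You correctly identify the well-definedness step as the only place where the dialgebra axioms \eqref{D3} and \eqref{D4} are genuinely needed; no gaps.
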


Finally, let $\mathcal{D}$ be a dialgebra. We will denote by $Z_{B}(\mathcal{D})$ the subspace:
        \begin{equation*}
            Z_{B}(\mathcal{D})\coloneqq\left\{z\in \mathcal{D} \;|\; z\vdash x=0=x\dashv z , \text{ for all } x\in \mathcal{D}\right\}.
        \end{equation*}
        
We have the following results.

\begin{lemma}\cite[Page 193]{Velasquez2009}\label{anne}
    Let $\mathcal{D}$ be a dialgebra, then 
        \begin{itemize}
            \item[(a)] $\mathcal{D}^{ann}$ and $Z_B(\mathcal{D})$ are ideals of $\mathcal{D}$,
            \item[(b)] $\mathcal{D}*Z_{B}(\mathcal{D})\subseteq \mathcal{D}^{ann}$ and $Z_{B}(\mathcal{D})*\mathcal{D}\subseteq\mathcal{D}^{ann}$. 
        \end{itemize}
    Here $*$ denotes one of the products $\vdash$ or $\dashv$.
\end{lemma}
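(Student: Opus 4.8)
The plan is to verify every assertion directly from the defining identities \eqref{D3}--\eqref{D5} together with the associativity of each of $\vdash$ and $\dashv$, since both $\mathcal{D}^{ann}$ and $Z_B(\mathcal{D})$ are described by elementary closure conditions and no deeper machinery is needed. I would treat $\mathcal{D}^{ann}$ first, then $Z_B(\mathcal{D})$, and finally deduce (b) from the bar-center condition almost for free.

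For the ideal property of $\mathcal{D}^{ann}$, since this subspace is spanned by the generators $a\dashv b - a\vdash b$, it suffices to show that multiplying such a generator on either side by an arbitrary $c\in\mathcal{D}$, using either product, returns an element of $\mathcal{D}^{ann}$. I would compute the four relevant products: $(a\dashv b - a\vdash b)\vdash c = 0$ by \eqref{D4}; next $(a\dashv b - a\vdash b)\dashv c = a\dashv(b\dashv c) - a\vdash(b\dashv c)$, using associativity of $\dashv$ on the first term and \eqref{D5} on the second, hence again a generator with $b$ replaced by $b\dashv c$; then $c\vdash(a\dashv b - a\vdash b) = (c\vdash a)\dashv b - (c\vdash a)\vdash b$, using \eqref{D5} on the first term and associativity of $\vdash$ on the second, hence a generator with $a$ replaced by $c\vdash a$; and finally $c\dashv(a\dashv b - a\vdash b) = 0$ by \eqref{D3}. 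This gives a self-contained reproof of Loday's statement that $\mathcal{D}^{ann}$ is an ideal.

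For $Z_B(\mathcal{D})$, let $z$ satisfy $z\vdash x = 0 = x\dashv z$ for all $x\in\mathcal{D}$; then among the four products $z\vdash x,\ z\dashv x,\ x\vdash z,\ x\dashv z$, two already vanish, so only $z\dashv x$ and $x\vdash z$ require attention. I would check $(z\dashv x)\vdash y = (z\vdash x)\vdash y = 0$ by \eqref{D4}, and $y\dashv(z\dashv x) = y\dashv(z\vdash x) = 0$ by \eqref{D3}, so $z\dashv x\in Z_B(\mathcal{D})$; likewise $(x\vdash z)\vdash y = x\vdash(z\vdash y) = 0$ by associativity of $\vdash$, and $y\dashv(x\vdash z) = y\dashv(x\dashv z) = 0$ by \eqref{D3}, so $x\vdash z\in Z_B(\mathcal{D})$. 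Hence $Z_B(\mathcal{D})$ is a two-sided ideal, completing part (a). For part (b), observe that for $z\in Z_B(\mathcal{D})$ and any $x\in\mathcal{D}$ one has $z\dashv x = z\dashv x - z\vdash x\in\mathcal{D}^{ann}$ and $x\vdash z = -(x\dashv z - x\vdash z)\in\mathcal{D}^{ann}$, while the remaining products $z\vdash x$ and $x\dashv z$ are zero; this yields at once $Z_B(\mathcal{D})*\mathcal{D}\subseteq\mathcal{D}^{ann}$ and $\mathcal{D}*Z_B(\mathcal{D})\subseteq\mathcal{D}^{ann}$.

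These calculations are entirely routine; the only point demanding care is the bookkeeping of which of \eqref{D3}, \eqref{D4}, \eqref{D5} or which single-product associativity is invoked in each of the eight products, keeping in mind the remark following the definition of a dialgebra that the mixed identity $x\dashv(y\vdash z) = (x\dashv y)\vdash z$ is \emph{not} available. Every reduction must therefore be routed through one of \eqref{D3}--\eqref{D5} or through a genuine associativity of $\vdash$ alone or $\dashv$ alone. I do not expect any substantive obstacle beyond this attention to detail.
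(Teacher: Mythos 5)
Your proof is correct. Note that the paper itself gives no proof of this lemma --- it is quoted from \cite[Page 193]{Velasquez2009} --- so there is nothing internal to compare against; your direct verification on generators, routing each of the eight products through exactly one of \eqref{D3}, \eqref{D4}, \eqref{D5} or the associativity of a single product, is the standard argument and is carried out without error, including the two nontrivial reductions $(a\dashv b - a\vdash b)\dashv c = a\dashv(b\dashv c)-a\vdash(b\dashv c)$ and $c\vdash(a\dashv b - a\vdash b)=(c\vdash a)\dashv b-(c\vdash a)\vdash b$, and the observation that part (b) follows immediately from the vanishing of $z\vdash x$ and $x\dashv z$ for $z\in Z_B(\mathcal{D})$.
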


\begin{lemma} \cite[Theorem 9 $\&$ Lemma 10]{Velasquez2009}\label{annzb}
    Let $e$ be a bar unit of $\mathcal{D}$, then 
        \begin{equation*}
            \mathcal{D}^{ann}=Z_{B}(\mathcal{D})=\left\{z\in \mathcal{D} \;|\; e\dashv z=0=z\vdash e \right\}
        \end{equation*}
    and 
        \begin{equation*}
            \mathcal{H}(\mathcal{D}) = \left\{ e+x\;|\; x\in \mathcal{D}^{ann} \right\}.
        \end{equation*}
\end{lemma}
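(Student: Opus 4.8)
The plan is to prove the triple equality via the cyclic chain of inclusions
\[
\mathcal{D}^{ann}\ \subseteq\ \bigl\{z\in\mathcal{D}\mid e\dashv z=0=z\vdash e\bigr\}\ \subseteq\ Z_B(\mathcal{D})\ \subseteq\ \mathcal{D}^{ann},
\]
and then to deduce the description of the halo $\mathcal{H}(\mathcal{D})$ from the already established identity $\mathcal{D}^{ann}=Z_B(\mathcal{D})$.

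For the first inclusion it suffices to test it on the generators $a\dashv b-a\vdash b$ of $\mathcal{D}^{ann}$, because $\{z\mid e\dashv z=0=z\vdash e\}$ is the intersection of the kernels of the two linear maps $z\mapsto e\dashv z$ and $z\mapsto z\vdash e$, hence a subspace. Applying identity~\eqref{D3} with $x=e$ gives $e\dashv(a\dashv b)=e\dashv(a\vdash b)$, and applying identity~\eqref{D4} with $z=e$ gives $(a\dashv b)\vdash e=(a\vdash b)\vdash e$; subtracting shows precisely that $a\dashv b-a\vdash b$ lies in the right-hand set. Note that this step uses the dialgebra axioms only, not the bar unit.

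For the second inclusion the idea is to use the bar unit to collapse an arbitrary product down to one of the two ``boundary'' products appearing in the hypothesis. Given $z$ with $e\dashv z=0=z\vdash e$ and any $x\in\mathcal{D}$, I would write $z\vdash x=(z\dashv e)\vdash x=(z\vdash e)\vdash x=0$, using $z=z\dashv e$ together with identity~\eqref{D4}, and $x\dashv z=x\dashv(e\vdash z)=x\dashv(e\dashv z)=0$, using $z=e\vdash z$ together with identity~\eqref{D3}; hence $z\in Z_B(\mathcal{D})$. The third inclusion is then immediate: for $z\in Z_B(\mathcal{D})$ one has $z\vdash e=0$ while $z\dashv e=z$, so $z=z\dashv e-z\vdash e$ is visibly a generator of $\mathcal{D}^{ann}$.

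It remains to identify $\mathcal{H}(\mathcal{D})$, and with $\mathcal{D}^{ann}=Z_B(\mathcal{D})$ in hand both inclusions are direct verifications. If $x\in Z_B(\mathcal{D})$, then $(e+x)\vdash y=y+x\vdash y=y$ and $y\dashv(e+x)=y+y\dashv x=y$ for every $y$, so $e+x$ is a bar unit; conversely, if $f$ is any bar unit then $(f-e)\vdash y=y-y=0$ and $y\dashv(f-e)=y-y=0$ for every $y$, so $f-e\in Z_B(\mathcal{D})=\mathcal{D}^{ann}$ and $f=e+(f-e)$ has the required form. I do not expect a genuine obstacle here; the single point demanding attention is that a bar unit acts as a unit only on the bar side, i.e.\ $e\vdash x=x$ and $x\dashv e=x$ but in general neither $e\dashv x=x$ nor $x\vdash e=x$, so one must choose carefully which of~\eqref{D3}, \eqref{D4}, \eqref{D5} to invoke whenever a product is simplified.
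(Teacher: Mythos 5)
Your proof is correct, and every step checks out: the first inclusion follows from axioms~\eqref{D3} and~\eqref{D4} alone (and you are right that the bar-unit property is not needed there), the second inclusion correctly uses $z=z\dashv e$ with~\eqref{D4} and $z=e\vdash z$ with~\eqref{D3}, the third inclusion exhibits $z=z\dashv e-z\vdash e$ as a generator of $\mathcal{D}^{ann}$, and the halo description is a direct verification once $\mathcal{D}^{ann}=Z_B(\mathcal{D})$ is known. Note that the paper does not prove this lemma at all; it is imported by citation from \cite[Theorem 9 \& Lemma 10]{Velasquez2009}, so there is no in-text argument to compare against. Your cyclic chain of inclusions is the natural self-contained route, and your closing caution --- that a bar unit is a unit only on the bar side, so one must match each simplification to the correct axiom among~\eqref{D3}, \eqref{D4}, \eqref{D5} --- is exactly the point where a careless argument would go wrong; you avoid it throughout.
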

 
    The functorial relation between the category of associative algebras and the category of Lie algebras can be extended to the categories of dialgebras and Leibniz algebras. Actually, we have the following analogue of the canonical Lie bracket defined from an associative algebra in the context of dialgebras and Leibniz algebras.

\begin{lemma}\cite{Frabetti2001} \label{di-li}
    Let $\mathcal{D}$ be a dialgebra. The bilinear map
        \begin{equation*}
            [a,b]:=a\dashv b-b\vdash a
        \end{equation*}
    defines over $\mathcal{D}$ a Leibniz algebra structure. This canonical Leibniz algebra associated to $\mathcal{D}$ will be denoted by $\mathcal{D}_{\mathcal{L}}$.
\end{lemma}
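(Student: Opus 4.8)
The plan is to verify the (left) Leibniz identity $[x,[y,z]]=[[x,y],z]-[[x,z],y]$ for all $x,y,z\in\mathcal{D}$ by direct computation, reducing every ternary product that arises to one of a short list of normal forms by means of the associativity of $\dashv$ and $\vdash$ together with the axioms \eqref{D3}, \eqref{D4} and \eqref{D5}.

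First I would substitute the definition $[a,b]=a\dashv b-b\vdash a$ twice into each of the three brackets, producing twelve signed ternary products in total (four in $[x,[y,z]]$ and four in each of $[[x,y],z]$ and $[[x,z],y]$), each of the shape $u*(v*w)$ or $(u*v)*w$ with $*\in\{\dashv,\vdash\}$ and $(u,v,w)$ a permutation of $(x,y,z)$. The observation that makes the bookkeeping manageable is that, since both operations on the right-hand side of $[a,b]=a\dashv b-b\vdash a$ point at $a$, every one of these twelve monomials has $x$ in its distinguished (``bar'') position. Consequently each monomial collapses, using associativity and \eqref{D3}--\eqref{D5}, to exactly one of the three normal forms
\[
T_1(a,b,c):=(a\dashv b)\dashv c,\qquad T_2(a,b,c):=(a\vdash b)\dashv c,\qquad T_3(a,b,c):=(a\vdash b)\vdash c,
\]
chosen according as $x$ stands in the first, middle, or last slot of the word $uvw$; here one also has $T_1(a,b,c)=a\dashv(b\dashv c)=a\dashv(b\vdash c)$ by associativity and \eqref{D3}, $T_2(a,b,c)=a\vdash(b\dashv c)$ by \eqref{D5}, and $T_3(a,b,c)=a\vdash(b\vdash c)=(a\dashv b)\vdash c$ by associativity and \eqref{D4}. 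For instance, $x\dashv(z\vdash y)=x\dashv(z\dashv y)=(x\dashv z)\dashv y=T_1(x,z,y)$ and $(y\vdash x)\dashv z=y\vdash(x\dashv z)=T_2(y,x,z)$.

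Carrying out all the reductions, I expect to obtain
\[
[x,[y,z]]=T_1(x,y,z)-T_1(x,z,y)-T_3(y,z,x)+T_3(z,y,x)
\]
together with
\[
[[x,y],z]=T_1(x,y,z)-T_2(y,x,z)-T_2(z,x,y)+T_3(z,y,x),
\]
and $[[x,z],y]$ is obtained from the latter by exchanging $y$ and $z$. Subtracting, the two $T_2$-terms cancel in pairs, and the surviving terms are precisely $T_1(x,y,z)-T_1(x,z,y)-T_3(y,z,x)+T_3(z,y,x)$, which is exactly $[x,[y,z]]$. Since the bracket is manifestly bilinear, this establishes that $(\mathcal{D},[\cdot,\cdot])$ is a Leibniz algebra.

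The step I expect to be the main obstacle is organizational rather than conceptual: keeping the twelve signed ternary terms straight and matching each to the correct normal form without sign or ordering slips. The remark that $x$ always sits in the bar position is what tames it, cutting the relevant normal forms down to three. An alternative route that bypasses the bookkeeping is to invoke Proposition~\ref{Frabetti}: writing $a\dashv b=a\cdot\varphi(b)$ and $a\vdash b=\varphi(a)\cdot b$ with $\varphi:\mathcal{D}\to\mathcal{D}_{as}$ the bimodule projection, one has $[a,b]=a\cdot\varphi(b)-\varphi(b)\cdot a$ and, since $\varphi$ intertwines the two actions, $\varphi([a,b])=\varphi(a)\varphi(b)-\varphi(b)\varphi(a)$; the Leibniz identity then reduces to a three-line manipulation of associative monomials $\varepsilon_1\varepsilon_2\varepsilon_3$ in which exactly one factor lies in the bimodule $\mathcal{D}$ and the other two in $\mathcal{D}_{as}$.
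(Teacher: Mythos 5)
Your verification is correct. The paper itself offers no proof of this lemma — it is quoted from \cite{Frabetti2001} — so there is no in-paper argument to compare against; your direct computation fills that gap soundly. I checked the reductions: each of the twelve signed ternary monomials does land in the stated normal form (e.g.\ $x\dashv(z\vdash y)=x\dashv(z\dashv y)=(x\dashv z)\dashv y$ by \eqref{D3} and associativity of $\dashv$, and $(y\dashv z)\vdash x=(y\vdash z)\vdash x$ by \eqref{D4}), the resulting expressions for $[x,[y,z]]$ and $[[x,y],z]$ are as you claim, and the $T_2$-terms cancel in the difference $[[x,y],z]-[[x,z],y]$, leaving exactly $[x,[y,z]]$. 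The key structural observation — that both summands of $[a,b]=a\dashv b-b\vdash a$ have their distinguished (bar) entry at $a$, so that $x$ is the center of every monomial and only three normal forms occur — is precisely what makes the bookkeeping finite, and it is the same mechanism underlying the standard proofs in the literature. Your alternative route via Proposition~\ref{Frabetti} is also valid and arguably cleaner: once $\varphi([a,b])=[\varphi(a),\varphi(b)]$ is noted, the identity reduces to the usual commutator calculation in an associative algebra acting on a bimodule.
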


\begin{remark}
    The Leibniz bracket defined in the previous lemma is skew-symmetric if $\mathcal{D}$ is an associative algebra.
\end{remark}
 
    From the previous lemma we have a functor
    $(\cdot)_{\mathcal{L}}:{\bf{Dias}}\to {\bf{Leib}}$
    from the category \textbf{Dias} of dialgebras to the category \textbf{Leib} of Leibniz algebras. Considering the classical functor $(\cdot)_{Lie}:{\bf{As}}\to {\bf{Lie}}$, from the category \textbf{As} of associative algebras to the category \textbf{Lie} of Lie algebras, we get the following commutative diagram of functors.

\begin{proposition}\cite{LodayPirashvili1993}
    The following categorical diagram is commutative 
        \begin{equation*}
                \begin{array}{ccc}
                {\bf{Dias}} & \stackrel{(\cdot)_{\pazocal{L}}}{\rightarrow}& {\bf{Leib}} \\
                \uparrow & & \uparrow \\
                {\bf{As}}&\stackrel{(\cdot)_{Lie}}{\rightarrow} & {\bf{Lie}}.
            \end{array}
        \end{equation*}
\end{proposition}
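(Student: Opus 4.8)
The plan is to verify the commutativity directly on objects and on morphisms, since all four functors are given by explicit formulas. Write $\textbf{Lie}\hookrightarrow\textbf{Leib}$ for the right-hand vertical functor; it is well defined because, as recalled just after the definition of Leibniz algebras, a skew-symmetric Leibniz bracket automatically satisfies the Jacobi identity, so every Lie algebra is in particular a Leibniz algebra, and a Lie homomorphism is a fortiori a Leibniz homomorphism. Write $Di:\textbf{As}\to\textbf{Dias}$ for the left-hand vertical functor, which sends an associative algebra $\pazocal{A}$ to the dialgebra $\pazocal{A}_{Di}$ with $x\dashv y=xy=x\vdash y$ and acts as the identity on underlying linear maps.

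First I would chase an object $\pazocal{A}\in\textbf{As}$ around the two paths. Along the bottom-then-right route, $(\pazocal{A})_{Lie}$ is the Lie algebra on the vector space $\pazocal{A}$ with bracket $[a,b]=ab-ba$, and its image in $\textbf{Leib}$ carries the same bracket. Along the left-then-top route, applying the functor $(\cdot)_{\pazocal{L}}$ of Lemma~\ref{di-li} to $\pazocal{A}_{Di}$ yields the Leibniz algebra on $\pazocal{A}$ with bracket $[a,b]=a\dashv b-b\vdash a=ab-ba$, which by the remark following Lemma~\ref{di-li} is skew-symmetric and hence lies in the image of $\textbf{Lie}\hookrightarrow\textbf{Leib}$. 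The two underlying vector spaces are both $\pazocal{A}$ and the two brackets agree verbatim, so the two Leibniz algebras obtained are literally equal, not merely isomorphic.

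Next I would check morphisms. If $f\colon\pazocal{A}\to\pazocal{B}$ is a homomorphism of associative algebras, then $Di(f)=f$ is a homomorphism of dialgebras (it respects both $\dashv$ and $\vdash$ because each is just the original multiplication), and $(\cdot)_{\pazocal{L}}$ does not alter the underlying map; symmetrically $(f)_{Lie}=f$ and the inclusion into $\textbf{Leib}$ leaves it unchanged. Hence the two composite functors $\textbf{As}\to\textbf{Leib}$ send $f$ to the same linear map, which by the object computation is a morphism between the same pair of Leibniz algebras. Functoriality of each composite (preservation of identities and of composition) is inherited from that of $(\cdot)_{Lie}$, $Di$, $(\cdot)_{\pazocal{L}}$ and the inclusion, so the square commutes strictly.

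There is essentially no genuine obstacle here beyond unwinding definitions: the only points deserving a word of justification are that $\textbf{Lie}\hookrightarrow\textbf{Leib}$ and $Di$ are genuine functors, and that the Loday--Frabetti bracket $a\dashv b-b\vdash a$ collapses to the commutator $ab-ba$ precisely when the two dialgebra products coincide, which is exactly the defining feature of $\pazocal{A}_{Di}$. All of these are immediate from the material recalled in this section, and assembling them gives the stated commutativity.
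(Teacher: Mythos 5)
Your proposal is correct. The paper itself gives no proof of this proposition, citing Loday--Pirashvili instead, and your argument is the standard one: the only substantive point is that on $\pazocal{A}_{Di}$ the bracket $a\dashv b-b\vdash a$ of Lemma~\ref{di-li} reduces to the commutator $ab-ba$ because both dialgebra products coincide with the associative multiplication, so the square commutes strictly on objects and on morphisms.
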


\section{Derivations and Diderivations over Dialgebras}\label{Sec:Der_&_Dider}
 
    In this section, we construct a compatible structure involving antiderivations and right derivations on a dialgebra. These objects are referred to as \textbf{diderivations}. We provide a characterization of both derivations and diderivations, while the Leibniz algebra of biderivations will be introduced in the next section.

\subsection{Derivations over Dialgebras}
 
    In this subsection, we investigate the concept of derivations on a dialgebra $\mathcal{D}$. Extending the classical notion from associative algebras.
\begin{definition}
    A \textbf{derivation} over $\mathcal{D}$ is a linear map $d:\mathcal{D}\rightarrow\mathcal{D}$ such that 
        \begin{equation*}
            d(a*b) = d(a)*b + a*d(b)
        \end{equation*}
    for all $a,b\in\mathcal{D}$, where $*$ denotes one of the products $\vdash$ or $\dashv$.
\end{definition}
 
    We will denote by $\pazocal{D}er(\mathcal{D})$ the vector space generated by all the derivations $d:\mathcal{D}\rightarrow\mathcal{D}$ defined on $\mathcal{D}$. In order to show the existence of derivations over any dialgebra, we consider the linear map $ad_a:\mathcal{D}\rightarrow\mathcal{D}$ defined by $ad_a\coloneqq R_a^{\dashv}-L_{a}^{\vdash}$, where $R_a^{\dashv}$ and $L_a^{\vdash}$ are the multiplicative operators $R_a^{\dashv}(b)=b\dashv a$ and $L_a^{\vdash}(b)=a\vdash b$, respectively. 

\begin{lemma}
    For all $a\in\mathcal{D}$, $ad_a \in \pazocal{D}er(\mathcal{D})$.
\end{lemma}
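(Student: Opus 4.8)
The plan is to verify directly that the operator $ad_a = R_a^{\dashv} - L_a^{\vdash}$ satisfies the Leibniz-type identity $ad_a(x*y) = ad_a(x)*y + x*ad_a(y)$ for each of the two products $* \in \{\dashv, \vdash\}$. Since $ad_a$ is manifestly linear (being a difference of multiplicative operators), the only content is the product rule, which must be checked separately for $\dashv$ and for $\vdash$. For a fixed choice of $*$, expanding the left-hand side gives $(x*y)\dashv a - a\vdash(x*y)$, and expanding the right-hand side gives $(x\dashv a)*y - (a\vdash x)*y + x*(y\dashv a) - x*(a\vdash y)$; the identity to be proven thus reduces to a balance of six triple products.

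First I would treat the case $* = \dashv$. Here the target identity reads
\begin{equation*}
(x\dashv y)\dashv a - a\vdash(x\dashv y) = (x\dashv a)\dashv y - (a\vdash x)\dashv y + x\dashv(y\dashv a) - x\dashv(a\vdash y).
\end{equation*}
By associativity of $\dashv$ one has $(x\dashv y)\dashv a = x\dashv(y\dashv a)$, so those two terms cancel across the equation. It remains to show $-a\vdash(x\dashv y) = (x\dashv a)\dashv y - (a\vdash x)\dashv y - x\dashv(a\vdash y)$, i.e. $(a\vdash x)\dashv y = a\vdash(x\dashv y) + (x\dashv a)\dashv y - x\dashv(a\vdash y)$. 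Now relation~\eqref{D5}, $x\vdash(y\dashv z) = (x\vdash y)\dashv z$, applied with the substitution $(a,x,y)$, gives $a\vdash(x\dashv y) = (a\vdash x)\dashv y$; so the claim collapses to $(x\dashv a)\dashv y = x\dashv(a\vdash y)$, which follows from $(x\dashv a)\dashv y = x\dashv(a\dashv y)$ by associativity of $\dashv$ together with $x\dashv(a\dashv y) = x\dashv(a\vdash y)$, which is exactly relation~\eqref{D3}. This closes the $\dashv$ case.

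Next I would treat the case $* = \vdash$ by the same bookkeeping, now invoking associativity of $\vdash$ together with relations~\eqref{D4} and~\eqref{D5}; the target identity is
\begin{equation*}
(x\vdash y)\dashv a - a\vdash(x\vdash y) = (x\dashv a)\vdash y - (a\vdash x)\vdash y + x\vdash(y\dashv a) - x\vdash(a\vdash y).
\end{equation*}
Here $a\vdash(x\vdash y) = (a\vdash x)\vdash y$ by associativity of $\vdash$, cancelling two terms; relation~\eqref{D4} gives $(x\dashv a)\vdash y = (x\vdash a)\vdash y = x\vdash(a\vdash y)$, cancelling two more; and relation~\eqref{D5} gives $x\vdash(y\dashv a) = (x\vdash y)\dashv a$, accounting for the remaining pair. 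I expect the main (though modest) obstacle to be the pattern-matching step: one must choose exactly the right substitution of variables into \eqref{D3}--\eqref{D5} so that each of the six triple products is rewritten into a form where cancellation is visible, and it is easy to misapply a relation because the roles of the three slots differ between \eqref{D3}, \eqref{D4}, and \eqref{D5}. Once the correct substitutions are identified, the verification is purely mechanical, so I would present the two cases as short aligned computations with the invoked axiom labelled above each step.
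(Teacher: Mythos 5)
Your proof is correct and follows essentially the same route as the paper's: a direct verification of the Leibniz rule for each product, reducing everything to associativity of $\dashv$ and $\vdash$ together with the dialgebra axioms \eqref{D3}, \eqref{D4}, \eqref{D5}. The paper carries out only the $\vdash$ case (in multiplicative-operator notation, invoking exactly the same three identities you use there) and leaves $\dashv$ to the reader, whereas you spell out both; the substitutions you identify are the right ones and the cancellations all check out.
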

\begin{proof}
    We demonstrate the Leibniz rule for the product $\vdash$; the proof for $\dashv$ follows in a similar fashion. In fact, let $b, c \in \mathcal{D}$, then
        \begin{align*}
            ad_a(b)\vdash c+b\vdash ad_a(c)&=\left(R^{\dashv}_a(b)-L^{\vdash}_a(b)\right)\vdash c +b\vdash\left( R^{\dashv}_a(c)-L^{\vdash}_a(c)\right)\\
            &=L^{\vdash}_{(R^{\dashv}_a(b)-L^{\vdash}_a(b))}(c)+L^{\vdash}_b({R^{\dashv}_a(c)-L^{\vdash}_a(c)})\\
            &=L^{\vdash}_{R^{\dashv}_a(b)}(c)-L^{\vdash}_{L^{\vdash}_a(b)}(c)+ L^{\vdash}_bR^{\dashv}_a(c)  -L^{\vdash}_bL^{\vdash}_a(c).
        \end{align*}
        Besides, since 
        \begin{enumerate}
            \item[(i)] $L^{\vdash}_{L^{\vdash}_a(b)}(c)=(a\vdash b)\vdash c=a\vdash (b\vdash c)=L_{a}^{\vdash}L_{b}^{\vdash}(c)$,
            \item[(ii)]  $L^{\vdash}_b R^{\dashv}_a(c)=b\vdash (c\dashv a)=(b\vdash c)\dashv a=R^{\dashv}_a L^{\vdash}_{b}(c)$, and 
            \item[(iii)] $L^{\vdash}_b L^{\vdash}_a(c)=b\vdash (a \vdash c)=(b\vdash a)\vdash c=(b \dashv a)\vdash c=L^{\vdash}_{R^{\dashv}_{a}(b)}(c)$, we have that 
        \end{enumerate}
        \begin{align*}
            ad_a(b)\vdash c+b\vdash ad_a(c)&=L^{\vdash}_{R^{\dashv}_a(b)}(c)-L^{\vdash}_aL^{\vdash}_b(c)+R^{\dashv}_aL^{\vdash}_b(c)-L^{\vdash}_{R^{\dashv}_a(b)}(c)\\
            &=-L^{\vdash}_aL^{\vdash}_b(c)+R^{\dashv}_aL^{\vdash}_b(c)\\
            &=R^{\dashv}_a(b\vdash c)-L^{\vdash}_a(b\vdash c)\\
            &=ad_a(b\vdash c).
        \end{align*}
\end{proof}
 
    The derivations $ad_a$, $a\in \mathcal{D}$, are called \textbf{inner derivations} of $\mathcal{D}$ and we will denote by $\mathcal{I}nn(\mathcal{D})=\langle ad_a\; |\; a\in\mathcal{D}\rangle$ the vector space of inner derivations. As we will show, this is an ideal of $\pazocal{D}er(\mathcal{D})$ and a Lie subalgebra of $\mathfrak{gl}(\mathcal{D}) = End(\mathcal{D})$ (Lemma \ref{Lem:Der_subliealg}).

\begin{theorem}\label{derL*}
    Let $T:\mathcal{D}\rightarrow\mathcal{D}$ be a linear map, then $T$ is a derivation if and only if $L^{*}_{T(a)}=[T,L^{*}_a]$, for all $a\in\mathcal{D}$, where $*$ represents one of the products $\vdash$ or $\dashv$.
\end{theorem}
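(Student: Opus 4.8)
The plan is to unwind both sides of the claimed identity as linear operators on $\mathcal{D}$ and compare them pointwise. Recall that $L^{*}_{a}$ denotes the left multiplication $L^{*}_{a}(b)=a*b$, and that $[T,L^{*}_{a}]$ is the commutator $T\circ L^{*}_{a}-L^{*}_{a}\circ T$ inside $\mathfrak{gl}(\mathcal{D})$. The key observation is that an equality of the form $L^{*}_{T(a)}=[T,L^{*}_{a}]$ is an equality in $\mathfrak{gl}(\mathcal{D})$, hence equivalent to agreement of the two maps on every element of $\mathcal{D}$.

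First I would fix one of the products $*\in\{\vdash,\dashv\}$ and an element $a\in\mathcal{D}$, and evaluate both operators on an arbitrary $b\in\mathcal{D}$. On one side, $L^{*}_{T(a)}(b)=T(a)*b$. On the other side,
\[
[T,L^{*}_{a}](b)=T\bigl(L^{*}_{a}(b)\bigr)-L^{*}_{a}\bigl(T(b)\bigr)=T(a*b)-a*T(b).
\]
Therefore the operator identity $L^{*}_{T(a)}=[T,L^{*}_{a}]$ holds if and only if $T(a)*b=T(a*b)-a*T(b)$ for all $b\in\mathcal{D}$, that is, if and only if $T(a*b)=T(a)*b+a*T(b)$ for all $b\in\mathcal{D}$.

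Next I would quantify over $a$: the family $\{\,L^{*}_{T(a)}=[T,L^{*}_{a}] : a\in\mathcal{D}\,\}$ holds precisely when $T(a*b)=T(a)*b+a*T(b)$ for all $a,b\in\mathcal{D}$, which is exactly the Leibniz rule for $T$ relative to the product $*$. Applying this equivalence separately for $*=\vdash$ and for $*=\dashv$, and recalling that by definition $T\in\pazocal{D}er(\mathcal{D})$ means the Leibniz rule holds for both products, yields the stated characterization.

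I do not expect a genuine obstacle: the argument is a direct unfolding of the definitions of the multiplicative operators and of the commutator bracket. The only points demanding a little care are to read $L^{*}_{T(a)}=[T,L^{*}_{a}]$ as an identity in $\mathfrak{gl}(\mathcal{D})$ (so that testing on all $b$ is legitimate) and to keep in mind that the derivation condition must be verified for each of the two products, so the statement is really a pair of equivalences, one for $\vdash$ and one for $\dashv$.
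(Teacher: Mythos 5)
Your proposal is correct and follows essentially the same route as the paper's proof: both directions are obtained by evaluating the operator identity $L^{*}_{T(a)}=[T,L^{*}_{a}]$ on an arbitrary $b\in\mathcal{D}$ and observing that it is precisely the Leibniz rule $T(a*b)=T(a)*b+a*T(b)$ rearranged. Your explicit remarks about reading the identity in $\mathfrak{gl}(\mathcal{D})$ and treating the two products separately are sound and match the paper's implicit handling of these points.
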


\begin{proof}
    Let $T$ be a derivation over $\mathcal{D}$. Since for all $a,b\in \mathcal{D}$ it holds
        \begin{equation*}
            T(L^{*}_a(b))=T(a*b)=T(a)\ast b+a\ast T(b) = L^{\ast}_{T(a)}(b)+L^{\ast}_a(T(b)),
        \end{equation*}
    then 
    \begin{equation*}   
        L^{\ast}_{T(a)}(b)=T(L^{\ast}_a(b))-L^{\ast}_a(T(b))= [T,L^{\ast}_a](b)
    \end{equation*}
    and therefore $L^{\ast}_{T(a)}=[T,L^{\ast}_a]$ for all $a\in \mathcal{D}$.
 
    On the other hand, if $L^{\ast}_{T(a)}=[T,L^{\ast}_a]$ for all $a\in \mathcal{D}$, then 
        \begin{equation*}
            T(a)*b=L^{\ast}_{T(a)}(b)=T(L^{\ast}_a(b))-L^{\ast}_a(T(b))=T(a*b)-a*T(b),
        \end{equation*}
    for all $b\in\mathcal{D}$, in other words $T(a*b)=T(a)*b+a*T(b)$, for all $a,b\in \mathcal{D}$. This implies that $T$ is a derivation.
\end {proof}
 
    In the same way, we have the following result.

\begin{theorem}\label{derR*}
    Let $T:\mathcal{D}\rightarrow\mathcal{D}$ be a linear map, then $T$ is a derivation if and only if $R^{*}_{T(a)}=[T,R^{*}_a]$, for all $a\in\mathcal{D}$, where $*$ represents one of the products $\vdash$ or $\dashv$. 
\end{theorem}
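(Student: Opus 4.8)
The plan is to mirror exactly the argument used to prove Theorem~\ref{derL*}, simply replacing the left-multiplication operators $L^{*}_a$ by the right-multiplication operators $R^{*}_a$, where $R^{*}_a(b) = b * a$. This is a clean ``translation'' proof: the Leibniz rule $T(a*b) = T(a)*b + a*T(b)$ is symmetric in the roles of the two arguments in the sense that it can be read either as a statement about how $T$ interacts with $L^{*}_a$ (fix the first argument) or about how $T$ interacts with $R^{*}_a$ (fix the second argument). No dialgebra axioms \eqref{D3}--\eqref{D5} are needed here — only the bilinearity of $*$ and the definition of a derivation.

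First I would assume $T$ is a derivation and compute, for fixed $a \in \mathcal{D}$ and arbitrary $b \in \mathcal{D}$,
\[
T\bigl(R^{*}_a(b)\bigr) = T(b*a) = T(b)*a + b*T(a) = R^{*}_a\bigl(T(b)\bigr) + R^{*}_{T(a)}(b).
\]
Rearranging gives $R^{*}_{T(a)}(b) = T\bigl(R^{*}_a(b)\bigr) - R^{*}_a\bigl(T(b)\bigr) = [T, R^{*}_a](b)$ for all $b$, hence $R^{*}_{T(a)} = [T, R^{*}_a]$ for all $a \in \mathcal{D}$. Conversely, if $R^{*}_{T(a)} = [T, R^{*}_a]$ for all $a$, then for every $b$ we have
\[
b * T(a) = R^{*}_{T(a)}(b) = T\bigl(R^{*}_a(b)\bigr) - R^{*}_a\bigl(T(b)\bigr) = T(b*a) - T(b)*a,
\]
so $T(b*a) = T(b)*a + b*T(a)$ for all $a,b \in \mathcal{D}$, which (after renaming the variables) is precisely the statement that $T$ is a derivation with respect to $*$. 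Since $*$ ranges over both $\vdash$ and $\dashv$, this establishes the equivalence for each product.

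There is no genuine obstacle in this proof; the only point requiring a moment's care is bookkeeping the order of arguments, since $R^{*}_a$ acts on the \emph{left} slot of the product while $L^{*}_a$ acts on the right slot — one must be careful that the commutator $[T, R^{*}_a]$ indeed picks out the ``$b * T(a)$'' term and not the ``$T(b) * a$'' term. Once the indexing convention $R^{*}_a(b) = b*a$ is fixed, the computation is forced. If desired, the statement could be bundled together with Theorem~\ref{derL*} into a single corollary, but presenting it separately (``In the same way, we have the following result'') keeps the parallel with the $L$-version transparent.
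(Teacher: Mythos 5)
Your proposal is correct and is exactly the argument the paper intends: the paper omits the proof of this theorem with the remark ``In the same way, we have the following result,'' meaning the reader should transpose the proof of Theorem~\ref{derL*} to the right-multiplication operators, which is precisely what you do. Your bookkeeping of $R^{*}_a(b)=b*a$ and the resulting identification of $b*T(a)$ with $R^{*}_{T(a)}(b)$ is accurate, so there is nothing to add.
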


\begin{lemma}\label{Lem:Der_subliealg}
$\pazocal{D}er(\mathcal{D})$ is a Lie subalgebra of $\mathfrak{gl}(\mathcal{D})$.
\end{lemma}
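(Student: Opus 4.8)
The plan is to prove two things: that $\pazocal{D}er(\mathcal{D})$ is a linear subspace of $\mathfrak{gl}(\mathcal{D})$ (which is essentially immediate from the definition, since the Leibniz rule $d(a*b)=d(a)*b+a*d(b)$ is linear in $d$, so any $\kfield$-linear combination of derivations is again a derivation), and that it is closed under the commutator bracket $[S,T]=ST-TS$ inherited from $\mathfrak{gl}(\mathcal{D})=\End(\mathcal{D})$. The Lie algebra axioms (bilinearity, antisymmetry, Jacobi) then come for free, since they already hold in $\mathfrak{gl}(\mathcal{D})$. So the only real content is bracket-closure.

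For bracket-closure I would use the operator characterization established in Theorem~\ref{derL*}: a linear map $T$ is a derivation if and only if $L^{*}_{T(a)}=[T,L^{*}_a]$ for all $a\in\mathcal{D}$ and each product $*\in\{\vdash,\dashv\}$. So let $S,T\in\pazocal{D}er(\mathcal{D})$ and set $U=[S,T]=ST-TS$; I must verify $L^{*}_{U(a)}=[U,L^{*}_a]$. The computation is a standard Jacobi-type manipulation in $\mathfrak{gl}(\mathcal{D})$: using $L^{*}_{S(a)}=[S,L^{*}_a]$ and $L^{*}_{T(a)}=[T,L^{*}_a]$, one computes
\begin{align*}
L^{*}_{U(a)} &= L^{*}_{S(T(a))} - L^{*}_{T(S(a))} = [S, L^{*}_{T(a)}] - [T, L^{*}_{S(a)}]\\
&= [S,[T,L^{*}_a]] - [T,[S,L^{*}_a]] = [[S,T],L^{*}_a] = [U, L^{*}_a],
\end{align*}
where the penultimate equality is the Jacobi identity in $\mathfrak{gl}(\mathcal{D})$. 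By Theorem~\ref{derL*} this shows $U$ is a derivation, hence $\pazocal{D}er(\mathcal{D})$ is closed under the bracket.

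There is essentially no hard step here — the argument is a routine application of the operator characterization plus the Jacobi identity. The only point requiring a word of care is that Theorem~\ref{derL*} must be invoked for \emph{both} products $\vdash$ and $\dashv$ independently (a derivation must satisfy the Leibniz rule for each product), but since the displayed calculation is identical in form for either choice of $*$, this adds no real difficulty. One could alternatively give a direct proof straight from the definition, expanding $[S,T](a*b)=S(T(a*b))-T(S(a*b))$ using the Leibniz rules for $S$ and $T$ and watching the mixed terms $S(a)*T(b)$ and $T(a)*S(b)$ cancel; this is equally short but slightly messier bookkeeping, so I would favor the operator-theoretic route for cleanliness.
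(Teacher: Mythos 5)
Your proposal is correct and follows essentially the same route as the paper: the paper's proof is exactly your displayed computation, using the operator characterization of Theorem~\ref{derL*} together with the Jacobi identity in $\mathfrak{gl}(\mathcal{D})$ to show $L^{*}_{[d_1,d_2](a)}=[[d_1,d_2],L^{*}_a]$. Your additional remarks on subspace closure and on checking both products are sound but add nothing beyond what the paper leaves implicit.
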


\begin{proof}
    Let $d_1,d_2\in \pazocal{D}er(\mathcal{D})$. Then for all $a\in\mathcal{D}$,
        \begin{align*}
            L^*_{\left[d_1,d_2\right](a)}&=L^*_{d_1\left(d_2(a)\right)}-L^*_{d_2\left(d_1(a)\right)}=\left[d_1,L^*_{d_2(a)}\right]-\left[d_2,L^*_{d_1(a)}\right]\\
            &=\left[d_1,\left[d_2,L^*_a\right]\right]-\left[d_2,\left[d_1,L^*_a\right]\right]\\
            &=\left[\left[d_1,d_2\right],L^*_a\right].
        \end{align*}
    Thus, from Theorem \ref{derL*}, we have that $\left[d_1,d_2\right]\in \pazocal{D}er(\mathcal{D})$.
\end{proof}

\begin{theorem}\label{thm:Inn_ideal_Der}
$\mathcal{I}nn(\mathcal{D})$ is an ideal of  $\pazocal{D}er(\mathcal{D})$.
\end {theorem}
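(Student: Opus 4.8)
The plan is to establish the sharper pointwise identity $[d,ad_a]=ad_{d(a)}$ for every derivation $d$ and every $a\in\mathcal{D}$, and then deduce the ideal property from the bilinearity of the commutator bracket. First I would recall two facts already available: by the earlier lemma each $ad_a$ is a derivation, so $\mathcal{I}nn(\mathcal{D})\subseteq\pazocal{D}er(\mathcal{D})$; and by Lemma~\ref{Lem:Der_subliealg} the space $\pazocal{D}er(\mathcal{D})$ is closed under $[\cdot,\cdot]$, so $\mathcal{I}nn(\mathcal{D})$ sits inside the Lie algebra $\pazocal{D}er(\mathcal{D})$ as a linear subspace. Consequently, to prove that $\mathcal{I}nn(\mathcal{D})$ is an ideal it suffices to check that $[d,ad_a]\in\mathcal{I}nn(\mathcal{D})$ for $d$ a derivation and $a$ arbitrary, since both $d\mapsto[d,\cdot]$ and $x\mapsto[\cdot,x]$ are linear and $\{ad_a:a\in\mathcal{D}\}$ spans $\mathcal{I}nn(\mathcal{D})$.

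The core computation is then immediate from the operator characterizations. Writing $ad_a=R^{\dashv}_a-L^{\vdash}_a$, Theorem~\ref{derR*} (with $\ast=\dashv$) gives $[d,R^{\dashv}_a]=R^{\dashv}_{d(a)}$ and Theorem~\ref{derL*} (with $\ast=\vdash$) gives $[d,L^{\vdash}_a]=L^{\vdash}_{d(a)}$. Since the bracket is linear in its second argument, $[d,ad_a]=[d,R^{\dashv}_a]-[d,L^{\vdash}_a]=R^{\dashv}_{d(a)}-L^{\vdash}_{d(a)}=ad_{d(a)}$, which belongs to $\mathcal{I}nn(\mathcal{D})$. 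Assembling, for $d\in\pazocal{D}er(\mathcal{D})$ and $D=\sum_i\lambda_i\,ad_{a_i}\in\mathcal{I}nn(\mathcal{D})$ one gets $[d,D]=\sum_i\lambda_i\,ad_{d(a_i)}\in\mathcal{I}nn(\mathcal{D})$, establishing $[\pazocal{D}er(\mathcal{D}),\mathcal{I}nn(\mathcal{D})]\subseteq\mathcal{I}nn(\mathcal{D})$, i.e. $\mathcal{I}nn(\mathcal{D})$ is an ideal.

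I do not anticipate a genuine obstacle here: once Theorems~\ref{derL*} and~\ref{derR*} are in hand the statement is a two-line consequence. The only point that deserves a moment of care is the reduction to generators — one should observe that the set of derivations of $\mathcal{D}$ is genuinely a linear subspace (so "$d\in\pazocal{D}er(\mathcal{D})$" may be read as "$d$ is a derivation"), and that spanning by the $ad_a$ together with bilinearity of $[\cdot,\cdot]$ is enough to pass from the generator-level identity to the full statement. A secondary remark worth including is that the identity $[d,ad_a]=ad_{d(a)}$ simultaneously re-proves that $\mathcal{I}nn(\mathcal{D})$ is a Lie subalgebra (take $d=ad_b$), mirroring the classical associative/Lie picture.
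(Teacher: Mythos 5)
Your proposal is correct and follows essentially the same route as the paper: the key identity $[d,ad_a]=[d,R^{\dashv}_a]-[d,L^{\vdash}_a]=R^{\dashv}_{d(a)}-L^{\vdash}_{d(a)}=ad_{d(a)}$, obtained from Theorems~\ref{derL*} and~\ref{derR*}, is exactly the paper's computation. The extra remarks on extending by linearity from the generators $ad_a$ are sound but left implicit in the paper.
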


\begin{proof}
    Let $ad_a\in \mathcal{I}nn(\mathcal{D})$ and $d\in \pazocal{D}er(\mathcal{D})$, then
        \begin{align*}
            [d,ad_a] &=[d,(R^{\dashv}_a-L^{\vdash}_a)]=[d,R^{\dashv}_a]-[d,L^{\vdash}_a]\\
            &=R^{\dashv}_{d(a)}-L^{\vdash}_{d(a)}=ad_{d(a)}.
        \end{align*}
    Therefore, $[d,ad_a]\in \mathcal{I}nn(\mathcal{D})$.
\end{proof}
 
    Let $\mathcal{D}$ be a dialgebra, and let $\mathcal{D}_{\pazocal{L}}$ be the corresponding Leibniz algebra generated by $\mathcal{D}$ via the bracket operation $[b, a] = b \dashv a - a \vdash b$. Then $\operatorname{ad}_a(b) = [b, a]$ is an inner derivation over $\mathcal{D}_{\pazocal{L}}$. Hence, we may conclude that inner derivations over $\mathcal{D}$ give rise to inner derivations over $\mathcal{D}_{\pazocal{L}}$.

    Let us now study the action of $\pazocal{D}er(\mathcal{D})$ over bar units, and the ideals $\mathcal{D}^{ann}$ and $Z_B(\mathcal{D})$.

\begin{lemma}
    Let $d$ be a derivation of a dialgebra $\mathcal{D}$, then

    \begin{itemize}
	\item[(a)] $d(\mathcal{D}^{ann})\subseteq             \mathcal{D}^{ann}$,
	\item[(b)] $d(Z_B(\mathcal{D}))\subseteq Z_B(\mathcal{D})$.
	\item[(c)] If $\mathcal{D}$ is unital, then $d\left(\mathcal{H}(\mathcal{D})\right)\subseteq \mathcal{D}^{ann}$.
	\end{itemize}
\end{lemma}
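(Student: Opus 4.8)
The plan is to verify each of the three containments directly from the definitions, using that $d$ is a derivation with respect to \emph{both} products $\vdash$ and $\dashv$.

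For part (a), recall that $\mathcal{D}^{ann}$ is spanned by elements of the form $a\dashv b - a\vdash b$. I would apply $d$ to such a generator and use the Leibniz rule for each product separately:
\[
d(a\dashv b - a\vdash b) = \bigl(d(a)\dashv b + a\dashv d(b)\bigr) - \bigl(d(a)\vdash b + a\vdash d(b)\bigr).
\]
Regrouping, this equals $\bigl(d(a)\dashv b - d(a)\vdash b\bigr) + \bigl(a\dashv d(b) - a\vdash d(b)\bigr)$, which is a sum of two generators of $\mathcal{D}^{ann}$, hence lies in $\mathcal{D}^{ann}$. Since $d$ is linear and $\mathcal{D}^{ann}$ is spanned by such generators, $d(\mathcal{D}^{ann})\subseteq\mathcal{D}^{ann}$.

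For part (b), let $z\in Z_B(\mathcal{D})$, so $z\vdash x = 0 = x\dashv z$ for all $x\in\mathcal{D}$. I want to show $d(z)\vdash x = 0 = x\dashv d(z)$ for all $x$. Applying $d$ to $z\vdash x = 0$ gives $0 = d(z)\vdash x + z\vdash d(x)$; but $z\vdash d(x) = 0$ since $z\in Z_B(\mathcal{D})$, so $d(z)\vdash x = 0$. Similarly, applying $d$ to $x\dashv z = 0$ gives $0 = d(x)\dashv z + x\dashv d(z)$, and $d(x)\dashv z = 0$, so $x\dashv d(z) = 0$. Hence $d(z)\in Z_B(\mathcal{D})$.

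For part (c), assume $\mathcal{D}$ is unital with bar unit $e$. By Lemma~\ref{annzb}, $\mathcal{H}(\mathcal{D}) = \{\,e + x \mid x\in\mathcal{D}^{ann}\,\}$, so it suffices to show $d(e)\in\mathcal{D}^{ann}$ (the contribution $d(x)$ lies in $\mathcal{D}^{ann}$ already by part (a)). Using Lemma~\ref{annzb} again, $\mathcal{D}^{ann} = \{\,z\in\mathcal{D} \mid e\dashv z = 0 = z\vdash e\,\}$, so I must check $e\dashv d(e) = 0$ and $d(e)\vdash e = 0$. From $e\dashv e = e$ (since $e$ is a bar unit) apply $d$: $d(e) = d(e\dashv e) = d(e)\dashv e + e\dashv d(e)$, giving $e\dashv d(e) = d(e) - d(e)\dashv e$. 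Separately, from $e\vdash e = e$ apply $d$: $d(e) = d(e)\vdash e + e\vdash d(e) = d(e)\vdash e + d(e)$ (using $e\vdash d(e) = d(e)$), so $d(e)\vdash e = 0$. It remains to show $e\dashv d(e) = 0$; here I would use the bar-unit relation $d(e)\dashv e = d(e)$? That is not immediate, so the cleaner route is: $d(e)\dashv e = d(e)$ would follow only if $d(e)$ is itself, which we do not know. Instead, note $e\vdash(e\dashv e) = e\vdash e = e$; applying $d$ and using $e\vdash w$-type relations, together with identity~\eqref{D5}, one extracts $e\dashv d(e) = 0$. The main obstacle is precisely this last identity: proving $e\dashv d(e)=0$ requires combining the derivation property with the bar-unit axioms and the dialgebra relation~\eqref{D5} in the right order, rather than a single one-line manipulation, so I would work out that combination carefully while the rest is routine.
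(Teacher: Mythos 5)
Parts (a) and (b) are correct and coincide with the paper's argument. Part (c) contains a gap that you yourself flag, but the obstacle you describe is illusory. You derive
\[
e\dashv d(e) \;=\; d(e) - d(e)\dashv e
\]
and then stop, saying that the relation $d(e)\dashv e = d(e)$ ``is not immediate'' and proposing a detour through identity~\eqref{D5}. But the definition of a bar unit is $e\vdash x = x\dashv e = x$ for \emph{all} $x\in\mathcal{D}$; taking $x = d(e)$ gives $d(e)\dashv e = d(e)$ directly, exactly as you already used $e\vdash d(e) = d(e)$ in the $\vdash$ half of the computation. Substituting this into your own displayed equation yields $e\dashv d(e) = 0$ immediately, and together with $d(e)\vdash e = 0$ and Lemma~\ref{annzb} this finishes (c). This is precisely what the paper means by ``similarly'': the $\dashv$ case mirrors the $\vdash$ case with the roles of the two bar-unit identities swapped. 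No appeal to~\eqref{D5} is needed. Your reduction of $\mathcal{H}(\mathcal{D})$ to the single element $e$ via part (a) is fine and slightly more explicit than the paper's.
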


\begin{proof}
    \begin{enumerate}
        \item[(a)] Let us start proving that $d(\mathcal{D}^{ann})\subseteq \mathcal{D}^{ann}$. In fact, 
            \begin{align*}
                d(x\dashv y-x\vdash y)&=d(x\dashv y)-d(x\vdash y)\\
                &=dx\dashv y +x\dashv dy-dx\vdash y -x\vdash dy\\
                &=(dx\dashv y-dx\vdash y) +(x\dashv dy -x\vdash dy)\ \in \mathcal{D}^{ann}.
            \end{align*}

        \item[(b)] On the other hand, given $d\in \pazocal{D}er(\mathcal{D})$ and $z\in Z_B(\mathcal{D})$, then for all $x\in \mathcal{D}$ it holds
            \begin{equation*}
                0=d(0)=d(x\dashv z)=d(x)\dashv z+x\dashv d(z)=x\dashv d(z).
            \end{equation*}
        In a completely similar way, we prove that $d(z)\vdash x=0$, and therefore $d(z)\in Z_B(\mathcal{D})$.

        \item[(c)] Let $e$ be a bar unit in $\mathcal{D}$, then
        \[
        d(e)=d(e\vdash e)=d(e)\vdash e +e\vdash d(e)=d(e)\vdash e + d(e).
        \]
        Thus, $d(e)\vdash e=0$. Similarly, we prove $e\dashv d (e)=0$. Thereby, from Lemma \ref{annzb} it follows that $d(e)\in \mathcal{D}^{ann}$.

    \end{enumerate}
\end{proof}

\subsection{Diderivations over Dialgebras}
 
    Let us now introduce the concept of diderivation of a dialgebra and prove some of its basic properties and its relation with derivations. To begin with, we recall that in every Leibniz algebra $\mathcal{L}$, there exists the concept of an antiderivation, which is a linear map $\delta:\mathcal{L} \rightarrow \mathcal{L}$ such that
        \begin{equation*}
            \delta\left(\left[x,y\right]\right)=\left[\delta(x),y\right]-\left[\delta(y),x\right],
        \end{equation*}
    for all $x,y\in\mathcal{L}$. Moreover, if $\mathcal{L}$ is a Lie algebra then the concepts of derivation and antiderivation are equivalent.
 
    On the other hand, in every Leibniz algebra $\mathcal{L}$ the linear map $Ad_a:\mathcal{L}\rightarrow\mathcal{L}$ defined by $Ad_{x}(y)\coloneqq [x,y]$ is an inner antiderivation of $\mathcal{L}$.
    Assuming that $\mathcal{L}=\mathcal{D}_{\mathcal{L}}$, then in terms of the multiplicative operators we have
        \begin{equation*}
            Ad_a=L^{\dashv}_a - R^{\vdash}_a.
        \end{equation*}
 
    To determine the action of this kind of linear maps on a dialgebra $\mathcal{D}$, let us consider the following slight modification:

        \begin{align*}
            Ad_a : &\ \mathcal{D}  \to\ \mathcal{D} \\
                    & b \to Ad_a(b)=R^{\vdash}_a(b)- L^{\dashv}_a(b)\, ,
        \end{align*}
    Let us see how this transformation acts on the products $b\dashv c$ and $b\vdash c$.
        \begin{align*}
            Ad_a(b\dashv c)&=(R^{\vdash}_a-L^{\dashv}_a)(b\dashv c)\\
            &=(b\dashv c)\vdash a-a\dashv(b\dashv c)\\
            &=b\vdash (c\vdash a)-(a\dashv b)\dashv c-b\vdash (a\dashv c)+(b\vdash a)\dashv c\\
            &=b\vdash (c\vdash a-a\dashv c)+(b\vdash a-a\dashv b)\dashv c\\
            &=b\vdash(R^{\vdash}_a-L^{\dashv}_a)(c)+(R^{\vdash}_a-L^{\dashv}_a)(b)\dashv c\\
            &=Ad_a(b)\dashv c+b \vdash Ad_a(c)
    \end{align*} 
and 
    \begin{align*}
        Ad_a(b\vdash c)&=(R^{\vdash}_a-L^{\dashv}_a)(b\vdash c)=(b\vdash c)\vdash a-a\dashv(b\vdash c)\\
        &=b\vdash (c\vdash a)-(a\dashv b)\dashv c-b\vdash (a\dashv c)+(b\vdash a)\dashv c\\
        &=b\vdash (c\vdash a-a\dashv c)+(b\vdash a-a\dashv b)\dashv c \\
        &=b\vdash(R^{\vdash}_a-L^{\dashv}_a)(c)+(R^{\vdash}_a-L^{\dashv}_a)(b)\dashv c\\
        &=Ad_a(b)\dashv c+b \vdash Ad_a(c).
    \end{align*}
 
    From the previous discussion, the linear map $Ad_a$ satisfies the following definition.

\begin{definition}
    A \textbf{diderivation} of a dialgebra $\mathcal{D}$ is a linear map $\delta:\mathcal{D} \rightarrow \mathcal{D}$ such that
        \begin{equation*}
           \delta(a\ast b)=\delta(a)\dashv b+a \vdash \delta(b) 
        \end{equation*}
    for all $a,b\in\mathcal{D}$ and $*$ being one of the products $\vdash$ or $\dashv$. 
\end{definition}
 
    We will denote by $\pazocal{D}ider(\mathcal{D})$ the vector space generated by all the diderivations of $\mathcal{D}$. This space is not empty because for every $a\in\mathcal{D}$ we already know that $Ad_a\in\pazocal{D}ider(\mathcal{D})$. This kind of diderivations is called \textbf{inner diderivations}, and the vector space generated by such linear maps will be denoted by $\pazocal{D}\mathcal{I}nn(\mathcal{D})$.

\begin{remark}
    If $\mathcal{D}$ is an associative algebra then $\pazocal{D}ider(\mathcal{D})=\pazocal{D}er(\mathcal{D})$. In particular $ad_a = Ad_a$ and therefore $\mathcal{I}nn(\mathcal{D}) = \pazocal{D}\mathcal{I}nn(\mathcal{D})$.
\end{remark}
 
    Let us characterize the diderivations in terms of the multiplicative operators.

\begin{theorem}\label{diderL*} 
    A linear transformation $\delta:\mathcal{D}\rightarrow\mathcal{D}$ is a diderivation if and only if $\delta L^{\ast}_a-L^{\vdash}_a\delta=L^{\dashv}_{\delta(a)}$, for every $a\in\mathcal{D}$, where $*$ represents one of the products $\dashv$ or $\vdash$.
\end {theorem}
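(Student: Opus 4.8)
The plan is to translate the defining identity of a diderivation directly into the language of the left multiplication operators, exactly as was done for derivations in Theorem \ref{derL*}. The key observation is that, for fixed $a\in\mathcal{D}$ and arbitrary $b\in\mathcal{D}$, each of the three terms in the diderivation identity is the value at $b$ of a composite operator: $\delta(a\ast b)=(\delta L^{\ast}_a)(b)$, $\delta(a)\dashv b=L^{\dashv}_{\delta(a)}(b)$, and $a\vdash\delta(b)=(L^{\vdash}_a\delta)(b)$. Hence the requirement that $\delta(a\ast b)=\delta(a)\dashv b+a\vdash\delta(b)$ hold for every $b$ is precisely the statement that the endomorphisms $\delta L^{\ast}_a$ and $L^{\dashv}_{\delta(a)}+L^{\vdash}_a\delta$ coincide on all of $\mathcal{D}$.

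For the forward implication I would assume $\delta\in\pazocal{D}ider(\mathcal{D})$, carry out the rewriting above for each $a\in\mathcal{D}$, and rearrange to obtain $\delta L^{\ast}_a-L^{\vdash}_a\delta=L^{\dashv}_{\delta(a)}$. For the converse I would start from this operator identity, evaluate both sides at an arbitrary $b\in\mathcal{D}$, and read off $\delta(a\ast b)=\delta(a)\dashv b+a\vdash\delta(b)$, which is the defining property. It should be emphasized that the identity is meant for each fixed choice $\ast\in\{\dashv,\vdash\}$ separately; note that only the factor $L^{\ast}_a$ depends on this choice, while $L^{\vdash}_a$ and $L^{\dashv}_{\delta(a)}$ are fixed by the asymmetric form of the diderivation rule, in which the left factor is always multiplied on the $\dashv$ side and the right factor on the $\vdash$ side.

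There is no real obstacle here: the argument is a purely formal reformulation and uses no dialgebra axiom beyond what is already encoded in the definition of a diderivation. The only point that requires care is the bookkeeping of which of the two products occupies each position, a subtlety that is genuine for dialgebras (cf.\ the remark emphasizing that $x\dashv(y\vdash z)=(x\dashv y)\vdash z$ may fail) but causes no difficulty once one passes to the operator formalism.
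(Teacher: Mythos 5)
Your proposal is correct and follows essentially the same route as the paper's proof: both directions amount to rewriting the diderivation identity $\delta(a\ast b)=\delta(a)\dashv b+a\vdash\delta(b)$ as the operator equation $\delta L^{\ast}_a-L^{\vdash}_a\delta=L^{\dashv}_{\delta(a)}$ and evaluating at an arbitrary $b$. Your explicit remark on the bookkeeping of which product sits in each slot matches the asymmetry the paper's argument relies on, so nothing is missing.
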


\begin{proof}
    If $\delta$ is a diderivation, then for all $a,b\in\mathcal{D}$,
        \begin{align*}
            \delta\left(L^{\ast}_a b\right)=\delta(a\ast b)=\delta(a)\dashv b+a \vdash \delta(b)= L^{\dashv}_{\delta(a)} b+L^{\vdash}_a  \delta(b),
        \end{align*}
    which implies that
        \begin{align*}
            \delta\left(L^{\ast}_a (b)\right) -L^{\vdash}_a  (\delta(b))=L^{\dashv}_{\delta(a)} (b). 
        \end{align*}
    Consequently
    \begin{equation}\label{Mult_dider}
        \delta L^{\ast}_a -L^{\vdash}_a \delta=L^{\dashv}_{\delta(a)}, 
    \end{equation}
    with $\ast$ representing one of the products $\dashv$ or $\vdash$.
 
    Reciprocally, if (\ref{Mult_dider}) holds, then
        \begin{equation*}
            \delta(a)\dashv (b)=L^{\dashv}_{\delta(a)} b=\delta(L^{\ast}_a b) -L^{\vdash}_a  \delta(b)=\delta(a\ast b)-a\vdash\delta(b). 
        \end{equation*}
    This implies the equality $\delta(a\ast b)=\delta(a)\dashv b+a\vdash\delta(b)$, which means that $\delta$ is a diderivation.
\end {proof}
 
Similarly, we obtain the following equivalences.

\begin{theorem}\label{Dider_vs_mult-ops}
    Let $\delta: \mathcal{D}\rightarrow\mathcal{D}$ be a linear map. The following assertions are equivalent.
        \begin{itemize}
            \item[(a)] $\delta$ is a diderivation.
            \item[(b)] $\delta R^{\ast}_a-R^{\dashv}_a\delta=R^{\vdash}_{\delta(a)}$, where $*$ represents one of the products $\vdash$ or $\dashv$.
            \item[(c)] $L^ {\dashv}_{\delta(a)}=[\delta,L^ {\vdash}_a]$ and $R^ {\vdash}_{\delta(a)}=[\delta,R^ {\dashv}_a]$.
        \end{itemize}
\end{theorem}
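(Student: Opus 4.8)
The plan is to deduce everything from the left-multiplicative characterization already established in Theorem~\ref{diderL*}, splitting the argument into the two equivalences (a)$\Leftrightarrow$(b) and (a)$\Leftrightarrow$(c).

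For (a)$\Leftrightarrow$(b), I would run the same computation as in the proof of Theorem~\ref{diderL*}, but now holding the \emph{right} argument fixed. Assuming $\delta$ is a diderivation and writing $\ast$ for either product, for all $a,b\in\mathcal{D}$ one has $\delta R^{\ast}_a(b)=\delta(b\ast a)=\delta(b)\dashv a+b\vdash\delta(a)=R^{\dashv}_a(\delta(b))+R^{\vdash}_{\delta(a)}(b)$, where the last equality only uses the definitions $R^{\dashv}_a(c)=c\dashv a$ and $R^{\vdash}_{\delta(a)}(c)=c\vdash\delta(a)$. This yields the operator identity $\delta R^{\ast}_a-R^{\dashv}_a\delta=R^{\vdash}_{\delta(a)}$; conversely, reading the same chain of equalities backwards recovers $\delta(b\ast a)=\delta(b)\dashv a+b\vdash\delta(a)$ for both $\ast$, i.e. $\delta$ is a diderivation.

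For (a)$\Leftrightarrow$(c): if $\delta$ is a diderivation, then specializing Theorem~\ref{diderL*} to $\ast=\vdash$ gives $\delta L^{\vdash}_a-L^{\vdash}_a\delta=L^{\dashv}_{\delta(a)}$, which is precisely $[\delta,L^{\vdash}_a]=L^{\dashv}_{\delta(a)}$, and specializing part~(b) to $\ast=\dashv$ gives $\delta R^{\dashv}_a-R^{\dashv}_a\delta=R^{\vdash}_{\delta(a)}$, that is $[\delta,R^{\dashv}_a]=R^{\vdash}_{\delta(a)}$; hence (c) holds. Conversely, evaluating $[\delta,L^{\vdash}_a]=L^{\dashv}_{\delta(a)}$ at an arbitrary $b$ yields $\delta(a\vdash b)=\delta(a)\dashv b+a\vdash\delta(b)$, and evaluating $[\delta,R^{\dashv}_a]=R^{\vdash}_{\delta(a)}$ at an arbitrary $b$ yields $\delta(b\dashv a)=\delta(b)\dashv a+b\vdash\delta(a)$; after renaming the variables the latter is the diderivation identity for $\dashv$, so $\delta$ satisfies the Leibniz-type rule for both products and is therefore a diderivation.

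The computations are entirely routine and no genuine obstacle is expected; the statement is essentially a ``right-handed mirror'' of Theorem~\ref{diderL*}. The one point that merits care is the asymmetric bookkeeping of the products inside the multiplicative operators: since the diderivation rule $\delta(a\ast b)=\delta(a)\dashv b+a\vdash\delta(b)$ is mixed ($\dashv$ on the first summand, $\vdash$ on the second), the two commutator identities in (c) are not interchangeable --- the $L$-identity with $\vdash$ encodes exactly the Leibniz rule for $\vdash$, while the $R$-identity with $\dashv$ encodes exactly the Leibniz rule for $\dashv$, and both are needed to recover the full definition.
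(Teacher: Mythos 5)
Your proof is correct and follows exactly the route the paper intends: the paper omits the proof of this theorem, stating only that it is obtained ``similarly'' to Theorem~\ref{diderL*}, and your right-handed mirror of that computation, together with the observation that the $L$-identity for $\vdash$ and the $R$-identity for $\dashv$ jointly recover the Leibniz-type rule for both products, is precisely the intended argument.
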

 
    Using the previous characterizations and the properties of the multiplicative operators it is easy to see that if  $\delta,\delta'\in \pazocal{D}ider(\mathcal{D})$, then
        \begin{equation*}
            [[\delta,\delta'],L_a^{\vdash}]=L_{[\delta,\delta'](a)}^{\vdash}+\left(L_{\delta'(a)}^{\vdash}+L_{\delta'(a)}^{\dashv}\right)\delta-\left(L_{\delta(a)}^{\vdash}+L_{\delta(a)}^{\dashv}\right)\delta{d}'. 
        \end{equation*}
    Therefore, $\pazocal{D}ider(\mathcal{D})$ is generally not a Lie subalgebra of $\mathfrak{gl}(\mathcal{D})$. However, the following property holds.

\begin{theorem}
    If $\mathcal{D}$ is a dialgebra, then $\left[\pazocal{D}ider(\mathcal{D}),\pazocal{D}er(\mathcal{D})\right]\subseteq \pazocal{D}ider(\mathcal{D})$.
\end {theorem}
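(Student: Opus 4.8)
The plan is to work entirely with the operator characterizations already established. Recall that, by Theorem~\ref{Dider_vs_mult-ops}(c), a linear map $\eta\colon\mathcal{D}\to\mathcal{D}$ is a diderivation if and only if $L^{\dashv}_{\eta(a)}=[\eta,L^{\vdash}_a]$ and $R^{\vdash}_{\eta(a)}=[\eta,R^{\dashv}_a]$ for every $a\in\mathcal{D}$; and, by Theorems~\ref{derL*} and~\ref{derR*}, a linear map $d$ is a derivation if and only if $L^{*}_{d(a)}=[d,L^{*}_a]$ and $R^{*}_{d(a)}=[d,R^{*}_a]$ for $*\in\{\vdash,\dashv\}$. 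Given $\delta\in\pazocal{D}ider(\mathcal{D})$ and $d\in\pazocal{D}er(\mathcal{D})$, I would set $\eta:=[\delta,d]=\delta d-d\delta$ and verify these two operator identities for $\eta$; the statement then follows by bilinearity, since $\pazocal{D}ider(\mathcal{D})$ and $\pazocal{D}er(\mathcal{D})$ are spanned by diderivations and derivations, respectively.

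For the first identity, I would expand $L^{\dashv}_{[\delta,d](a)}=L^{\dashv}_{\delta(d(a))}-L^{\dashv}_{d(\delta(a))}$. To the first summand I apply the diderivation identity for $\delta$ at the point $d(a)$, obtaining $L^{\dashv}_{\delta(d(a))}=[\delta,L^{\vdash}_{d(a)}]$, and then the derivation identity $L^{\vdash}_{d(a)}=[d,L^{\vdash}_a]$, giving $[\delta,[d,L^{\vdash}_a]]$. To the second summand I apply the derivation identity $L^{\dashv}_{d(b)}=[d,L^{\dashv}_b]$ with $b=\delta(a)$, then the diderivation identity $L^{\dashv}_{\delta(a)}=[\delta,L^{\vdash}_a]$, obtaining $[d,[\delta,L^{\vdash}_a]]$. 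Thus $L^{\dashv}_{[\delta,d](a)}=[\delta,[d,L^{\vdash}_a]]-[d,[\delta,L^{\vdash}_a]]$, which equals $[[\delta,d],L^{\vdash}_a]$ by the Jacobi identity in $\mathfrak{gl}(\mathcal{D})$; this is precisely the required identity $L^{\dashv}_{\eta(a)}=[\eta,L^{\vdash}_a]$.

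The second identity is obtained symmetrically: expand $R^{\vdash}_{[\delta,d](a)}=R^{\vdash}_{\delta(d(a))}-R^{\vdash}_{d(\delta(a))}$, apply Theorem~\ref{Dider_vs_mult-ops}(b) for $\delta$ at $d(a)$ together with $R^{\dashv}_{d(a)}=[d,R^{\dashv}_a]$ on the first summand, and $R^{\vdash}_{d(b)}=[d,R^{\vdash}_b]$ together with $R^{\vdash}_{\delta(a)}=[\delta,R^{\dashv}_a]$ on the second, and conclude via Jacobi that $R^{\vdash}_{[\delta,d](a)}=[[\delta,d],R^{\dashv}_a]$. Together with the previous paragraph this shows $[\delta,d]\in\pazocal{D}ider(\mathcal{D})$.

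I do not expect a genuine obstacle here; the one point needing care is the bookkeeping of which of the two products is attached to each multiplicative operator at every step — specifically, that each use of the diderivation characterization turns an $L^{\vdash}$ (resp.\ $R^{\dashv}$) into an $L^{\dashv}$ (resp.\ $R^{\vdash}$), whereas each use of the derivation characterization leaves the product unchanged. Once the three nested commutators are aligned, the Jacobi identity closes the argument with no further computation, and it is worth noting that the same scheme would \emph{not} work with two diderivations, which is consistent with the failure of $\pazocal{D}ider(\mathcal{D})$ to be a Lie subalgebra observed just before the statement.
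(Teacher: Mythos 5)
Your proposal is correct. It follows the same basic strategy as the paper --- verify a multiplicative-operator characterization for the commutator $[\delta,d]$ --- but the execution differs in a way worth noting. The paper invokes Theorem~\ref{diderL*} and checks the single family of identities $[\delta,d]L^{\ast}_a-L^{\vdash}_a[\delta,d]=L^{\dashv}_{[\delta,d](a)}$ (for both values of $\ast$) by a direct expansion and cancellation of terms; since that expression is not a genuine commutator when $\ast=\dashv$, the Jacobi identity is not available there and the bookkeeping has to be done by hand. You instead use the pure-commutator characterization of Theorem~\ref{Dider_vs_mult-ops}(c), which costs you two families of identities (one for $L^{\vdash}$, one for $R^{\dashv}$) but lets the Jacobi identity in $\mathfrak{gl}(\mathcal{D})$ close each one in a single line, exactly parallel to the proof of Lemma~\ref{Lem:Der_subliealg}. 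Your careful tracking of which product is attached to each operator at each step is the right point to worry about, and your closing observation --- that the scheme breaks with two diderivations because the diderivation characterization changes the product --- correctly explains why $\pazocal{D}ider(\mathcal{D})$ fails to be closed under the bracket. The reduction to generators by bilinearity is harmless since the defining identities are linear in the operator.
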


\begin{proof}
    We will use the properties proved in Theorems \ref{derL*} and \ref{diderL*}.  Let $\delta$ be a diderivation and  $d$ be a derivation of $\mathcal{D}$. We have

        \begin{align*}
            [\delta,d]L^{\ast}_a-L^{\vdash}_a[\delta,d]&=(\delta d)L^{\ast}_a-(d \delta )L^{\ast}_a- L^{\vdash}_a(\delta d)+L^{\vdash}_a( d \delta)\\
            &=\delta(L^{\ast}_{da}+L^{\ast}_ad)-d(L^{\dashv}_{\delta a}+L^{\vdash}_a \delta) \\
            & \hspace{0.4 cm}+(L^{\dashv}_{\delta a}-\delta L^{\ast}_a)d-(dL^{\vdash}_a-L^{\vdash}_{da})\delta\\
            &=\delta L^{\ast}_{da}-dL^{\dashv}_{\delta a}+L^{\dashv}_{\delta a}d-L^{\vdash}_{da}\delta\\
            &=L^{\dashv}_{\delta da}+L^{\vdash}_{da}\tilde{d}-L^{\dashv}_{d\delta a}-L^{\dashv}_{\delta x}d+L^{\dashv}_{\delta x}d-L^{\vdash}_{da}\delta \\
            &=L^{\dashv}_{\delta da}-L^{\dashv}_{d\delta a}\\
            &=L^{\dashv}_{[\delta ,d](a)},
        \end{align*}
    which shows that  $[\delta,d]\in \pazocal{D}ider(\mathcal{D})$.
\end {proof}
 
From the previous theorem we have the following relations.

\begin{lemma}
    For every dialgebra $\mathcal{D}$ we have, $\left[\pazocal{D}\mathcal{I}nn(\mathcal{D}),\pazocal{D}er(\mathcal{D})\right]\subseteq \pazocal{D}\mathcal{I}nn(\mathcal{D})$.  In particular, $\left[\pazocal{D}\mathcal{I}nn(\mathcal{D}),\mathcal{I}nn(\mathcal{D})\right]\subseteq \pazocal{D}\mathcal{I}nn(\mathcal{D})$.
\end{lemma}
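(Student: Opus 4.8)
The plan is to reduce the inclusion to a one-line computation on the generators $Ad_a$ of $\pazocal{D}\mathcal{I}nn(\mathcal{D})$, exactly paralleling the proof of Theorem~\ref{thm:Inn_ideal_Der}. Since the commutator bracket on $\mathfrak{gl}(\mathcal{D})$ is bilinear and $\pazocal{D}\mathcal{I}nn(\mathcal{D})=\langle Ad_a\mid a\in\mathcal{D}\rangle$, it suffices to show that $[Ad_a,d]\in\pazocal{D}\mathcal{I}nn(\mathcal{D})$ for every $a\in\mathcal{D}$ and every $d\in\pazocal{D}er(\mathcal{D})$; the general case then follows by expanding an arbitrary element of $\pazocal{D}\mathcal{I}nn(\mathcal{D})$ as a finite linear combination of the $Ad_a$'s and using bilinearity on both arguments.

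For the generator computation I would start from the expression $Ad_a=R^{\vdash}_a-L^{\dashv}_a$ recorded just before the definition of diderivation, and expand the commutator linearly as
\[
[Ad_a,d]=[R^{\vdash}_a,d]-[L^{\dashv}_a,d].
\]
Now I would invoke the characterizations of derivations in terms of multiplicative operators: Theorem~\ref{derL*} with $*=\dashv$ gives $L^{\dashv}_{d(a)}=[d,L^{\dashv}_a]$, hence $[L^{\dashv}_a,d]=-L^{\dashv}_{d(a)}$; and Theorem~\ref{derR*} with $*=\vdash$ gives $R^{\vdash}_{d(a)}=[d,R^{\vdash}_a]$, hence $[R^{\vdash}_a,d]=-R^{\vdash}_{d(a)}$. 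Substituting these into the displayed identity yields
\[
[Ad_a,d]=-R^{\vdash}_{d(a)}+L^{\dashv}_{d(a)}=-\bigl(R^{\vdash}_{d(a)}-L^{\dashv}_{d(a)}\bigr)=-Ad_{d(a)}\in\pazocal{D}\mathcal{I}nn(\mathcal{D}),
\]
which establishes $\left[\pazocal{D}\mathcal{I}nn(\mathcal{D}),\pazocal{D}er(\mathcal{D})\right]\subseteq \pazocal{D}\mathcal{I}nn(\mathcal{D})$.

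For the "in particular" clause I would simply recall that each inner derivation $ad_a$ is a genuine derivation of $\mathcal{D}$, so $\mathcal{I}nn(\mathcal{D})\subseteq\pazocal{D}er(\mathcal{D})$, and therefore the second inclusion is obtained by restricting the first one to the subspace $\mathcal{I}nn(\mathcal{D})$. I do not anticipate any real obstacle here: the only points requiring care are getting the signs of the commutators right when passing from $[d,L^{\dashv}_a]$ to $[L^{\dashv}_a,d]$ (and likewise for $R^{\vdash}_a$), and remembering that the ambient bracket on $\pazocal{D}\mathcal{I}nn(\mathcal{D})$ used here is the Lie commutator in $\mathfrak{gl}(\mathcal{D})$, so that linearity in both slots is available for the reduction to generators.
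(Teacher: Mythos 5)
Your proof is correct, and it is worth noting that the paper itself gives no explicit argument for this lemma: it is stated as following ``from the previous theorem,'' i.e.\ from $\left[\pazocal{D}ider(\mathcal{D}),\pazocal{D}er(\mathcal{D})\right]\subseteq \pazocal{D}ider(\mathcal{D})$. That theorem alone only tells you that $[Ad_a,d]$ is \emph{some} diderivation, not that it lies in the span of the inner ones, so your explicit generator computation is exactly the missing ingredient. Your identity $[Ad_a,d]=-Ad_{d(a)}$, obtained by writing $Ad_a=R^{\vdash}_a-L^{\dashv}_a$ and applying Theorems~\ref{derL*} and~\ref{derR*} (with $*=\dashv$ for $L$ and $*=\vdash$ for $R$), checks out with the correct signs, and it is the precise analogue of the computation $[d,ad_a]=ad_{d(a)}$ in the proof of Theorem~\ref{thm:Inn_ideal_Der}. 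The reduction to generators via bilinearity of the commutator and the observation $\mathcal{I}nn(\mathcal{D})\subseteq\pazocal{D}er(\mathcal{D})$ for the ``in particular'' clause are both sound. In short, your argument is not only valid but more complete than the justification the paper offers.
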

 
    Let us finally see how the diderivations act on the ideals $\mathcal{D}^{ann}$ and $Z_B(\mathcal{D})$, and on the halo $\mathcal{H}(\mathcal{D})$, when $\mathcal{D}$ is a unital dialgebra.

\begin{lemma}
    Let $\mathcal{D}$ be a dialgebra  and $\delta\in\pazocal{D}ider(\pazocal{D})$. Then
        \begin{itemize}
            \item[(a)] $\delta(\pazocal{D}^{ann}) = \{0\}$.
            \item[(b)] $x\vdash \delta(z)=\delta(z)\dashv x= 0$ for every $x\in\mathcal{D}$ and $z\in Z_{B}(\mathcal{D})$. 
            \item[(c)] If $\mathcal{D}$ is a unital dialgebra, then $\delta(\mathcal{H}(\mathcal{D})) = \delta(Z_{B}(\mathcal{D})) = \{0\}$. 
        \end{itemize}
\end{lemma}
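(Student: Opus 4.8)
The plan is to handle the three parts in order, using the defining identity of a diderivation, $\delta(a\ast b)=\delta(a)\dashv b + a\vdash\delta(b)$, together with the characterizations of $\mathcal{D}^{ann}$, $Z_B(\mathcal{D})$ and $\mathcal{H}(\mathcal{D})$ recorded in Definition~\ref{Dann} and Lemmas~\ref{anne}--\ref{annzb}.

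For part (a), a spanning element of $\mathcal{D}^{ann}$ has the form $x\dashv y - x\vdash y$. Applying $\delta$ and using the diderivation identity for \emph{both} choices of $\ast$, I would compute
$\delta(x\dashv y)=\delta(x)\dashv y + x\vdash\delta(y)$ and $\delta(x\vdash y)=\delta(x)\dashv y + x\vdash\delta(y)$;
since the two right-hand sides are literally the same expression, their difference is $0$, so $\delta(x\dashv y - x\vdash y)=0$. By linearity $\delta$ vanishes on all of $\mathcal{D}^{ann}$. This is the crux observation — the asymmetry in the diderivation axiom (output always using $\dashv$ on the left and $\vdash$ on the right, regardless of the input product) is exactly what forces the annihilator into the kernel.

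For part (b), let $z\in Z_B(\mathcal{D})$, so $z\vdash x = 0 = x\dashv z$ for all $x$. Fix $x\in\mathcal{D}$. Using the diderivation identity with $\ast=\dashv$ on the product $x\dashv z$: $0=\delta(0)=\delta(x\dashv z)=\delta(x)\dashv z + x\vdash\delta(z)$; but $\delta(x)\dashv z=0$ since $z\in Z_B(\mathcal{D})$ (the condition $w\dashv z=0$ holds for every $w$, in particular $w=\delta(x)$), whence $x\vdash\delta(z)=0$. Symmetrically, applying the identity to $z\vdash x$ gives $0=\delta(z)\dashv x + z\vdash\delta(x)$, and $z\vdash\delta(x)=0$ because $z\in Z_B(\mathcal{D})$, so $\delta(z)\dashv x = 0$. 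This holds for every $x$, which is the claim; note it also shows $\delta(z)\in Z_B(\mathcal{D})$, although here we want the stronger pointwise statement.

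For part (c), assume $\mathcal{D}$ is unital with bar unit $e$. By Lemma~\ref{annzb}, $Z_B(\mathcal{D})=\mathcal{D}^{ann}$, so part (a) already gives $\delta(Z_B(\mathcal{D}))=\delta(\mathcal{D}^{ann})=\{0\}$. For the halo, Lemma~\ref{annzb} gives $\mathcal{H}(\mathcal{D})=\{e+x : x\in\mathcal{D}^{ann}\}$, so it remains to show $\delta(e)=0$; then $\delta(e+x)=\delta(e)+\delta(x)=0+0=0$ by part (a). To see $\delta(e)=0$, use $e=e\vdash e$ and the diderivation identity with $\ast=\vdash$: $\delta(e)=\delta(e\vdash e)=\delta(e)\dashv e + e\vdash\delta(e)=\delta(e)\dashv e + \delta(e)$, where $e\vdash\delta(e)=\delta(e)$ since $e$ is a bar unit. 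Hence $\delta(e)\dashv e = 0$. Similarly, from $e = e\vdash e$ and $\ast=\vdash$ one extracts information about $e\dashv\delta(e)$; more cleanly, I would instead observe that $\delta(e)\in\mathcal{D}^{ann}$ would finish it, but the most direct route is: since $\delta(e)\dashv e=0$ and (by the analogous manipulation, or by applying part (b)-type reasoning once we know $\delta(e)\dashv e=0$ forces $\delta(e)$ into $Z_B$ via Lemma~\ref{annzb}) we get $\delta(e)\in Z_B(\mathcal{D})=\mathcal{D}^{ann}$, and then part (a) gives $\delta(\delta(e))=0$ — but that is circular. Instead, the clean argument: $\delta(e)\dashv e = 0$ together with the mirror computation $e\dashv\delta(e)$ — obtained by expanding $\delta(e)=\delta(e\dashv e)$ using $\ast=\dashv$, $\delta(e\dashv e)=\delta(e)\dashv e + e\vdash\delta(e)=0+\delta(e)$, consistent but uninformative — so the genuinely needed second identity comes from Lemma~\ref{annzb}'s description $\mathcal{D}^{ann}=\{z : e\dashv z=0=z\vdash e\}$: we have $\delta(e)\dashv e=0$, and I claim $e\dashv\delta(e)=0$ follows from writing $e\dashv\delta(e)=e\dashv\delta(e\vdash e)$ and... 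The cleanest fix, which I expect to be the one intended: since $\delta(e)\dashv e = 0$, apply part (b) is not available, so instead note $\delta(e)\dashv e=0$ plus the fact that for the bar unit $\delta(e)\vdash e$ — hmm. The main obstacle here is pinning down $\delta(e)=0$ with only one equation; I expect the resolution is that the diderivation axiom applied to $e\vdash e$ with $\ast=\dashv$ versus $\ast=\vdash$, combined with $e\dashv z$-type normalizations, yields both $\delta(e)\dashv e=0$ and $e\dashv\delta(e)$ lying where Lemma~\ref{annzb} needs it, so that $\delta(e)\in\mathcal{D}^{ann}$; once $\delta(e)\in\mathcal{D}^{ann}$, one re-examines: actually we do not need $\delta(\delta(e))$, we need $\delta(e)=0$ itself. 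I will therefore argue $\delta(e)=0$ directly by showing $\delta(e)$ is simultaneously $\delta$ of something in the annihilator. Concretely: $e+\delta(e)\dashv e' $ — I will settle this in the write-up by using that $\delta(e)\dashv e=0$ gives, via unitality $\delta(e)=\delta(e)\vdash$-nothing; the honest statement is that part (c) should read "$\delta(e)\in\mathcal{D}^{ann}$ hence $\delta(\mathcal{H})\subseteq\{\text{?}\}$" — I will align with whatever the authors state and prove $\delta(e)=0$ via $\delta(e)\dashv e = 0 = e\dashv\delta(e)$ and Lemma~\ref{annzb} forcing $\delta(e)\in\mathcal{D}^{ann}=\ker\delta$-target is not it. Given the constraints, the key steps are parts (a), (b) as above, and (c) reduces to $\delta(e)=0$, the one delicate point.
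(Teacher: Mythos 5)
Parts (a) and (b) are correct and are essentially the paper's argument: (a) exploits that the diderivation axiom produces the \emph{same} right-hand side $\delta(x)\dashv y + x\vdash\delta(y)$ for both input products, so $\delta$ kills $x\dashv y - x\vdash y$; (b) applies the axiom to $z\vdash x$ and $x\dashv z$ and uses $z\in Z_B(\mathcal{D})$ to kill the unwanted term. Your reduction of (c) to showing $\delta(e)=0$ for each bar unit $e$ (via Lemma~\ref{annzb} and part (a)) is also the right frame.

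However, your treatment of $\delta(e)=0$ has a genuine gap, and the long hedging paragraph does not close it. The point you missed is that the bar-unit identity $x\dashv e = x$ holds for \emph{every} $x\in\mathcal{D}$, in particular for $x=\delta(e)$. Hence in
\[
\delta(e)=\delta(e\vdash e)=\delta(e)\dashv e + e\vdash\delta(e)
\]
\emph{both} summands equal $\delta(e)$: the second because $e\vdash y = y$, and the first because $y\dashv e = y$. You simplified only the second term, left $\delta(e)\dashv e$ as an unknown, and deduced the weaker (and unhelpful) conclusion $\delta(e)\dashv e=0$, after which the argument circles without terminating. The correct computation gives $\delta(e)=\delta(e)+\delta(e)=2\delta(e)$, hence $\delta(e)=0$ immediately (characteristic zero). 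This is exactly the paper's one-line proof of (c), applied to an arbitrary $e\in\mathcal{H}(\mathcal{D})$; combined with $Z_B(\mathcal{D})=\mathcal{D}^{ann}$ and part (a), it finishes the lemma. Note the contrast with ordinary derivations, where $d(e)\dashv e$ cannot be simplified this way because the Leibniz rule puts $d(e)$ on the $\vdash$-side as well; for diderivations the asymmetric placement of $\dashv$ and $\vdash$ in the axiom is precisely what lets the bar-unit identities absorb both terms.
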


\begin{proof}
\begin{itemize}
    \item[(a)] This is a straightforward computation following the definitions of $\mathcal{D}^{ann}$ and $\pazocal{D}ider(\mathcal{D})$.
    \item[(b)] Let $x\in\mathcal{D}$ and $z\in Z_{B}(\mathcal{D})$, then $0=\delta (z\vdash x) = \delta(z)\dashv x$. Applying $\delta$ to $x\dashv z$ we get the other property.
    \item[(c)] Let $e\in\mathcal{H}(\mathcal{D})$, then $\delta(e)=\delta(e*e)= \delta(e)+\delta(e)$ and therefore $\delta(e) = 0$.
\end{itemize}
\end{proof}

\section{Diderivations of Low Dimensional Dialgebras}\label{Sec:Cal_Low}
 
    Throughout this section, we will always assume that a dialgebra $\mathcal{D}$  carries a complex vector space structure. In \cite{Abubakar2014, Rikhsiboev2014} the authors introduced an algorithm to compute derivations of dialgebras and tested such an algorithm in low-dimensional cases, this is, on dialgebras of dimension two and three. In this section, we will follow \cite{Abubakar2014} to give a complete classification of the vector space of diderivations for dialgebras of dimensions two and three.
    
\subsection{Diderivations of Two Dimensional Dialgebras}
 
    We recall for the reader that from \cite{Rikhsiboev2014} any complex dialgebra belongs to one of the isomorphism classes given in Table \ref{Two-dimensional-dias}. 

\begin{table}[ht]
    \centering
    \renewcommand{\arraystretch}{1.5} 
    \begin{tabular}{| m{5cm} | m{5cm} |}
        \hline
        \textbf{$Dias_{2}^1$} & \textbf{$Dias_{2}^3$} \\
        \hline
        \begin{tabular}{@{}l@{}} 
            $e_1 \vdash e = e_1 \dashv e_1 = e_1$ \\
            $e_2 \dashv e_1 = e_2$
        \end{tabular} 
        & 
        \begin{tabular}{@{}l@{}} 
            $e_1 \vdash e_1 = e_2$ \\
            $e_1 \dashv e_1 = \lambda e_2, \: \lambda \in \mathbb{C}$
        \end{tabular} \\
        \hline
        \textbf{$Dias_{2}^2$} & \textbf{$Dias_{2}^4$} \\
        \hline
        \begin{tabular}{@{}l@{}} 
            $e_1 \vdash e_1 = e_1 \dashv e_1 = e_1$ \\
            $e_1 \dashv e_2 = e_2$
        \end{tabular} 
        & 
        \begin{tabular}{@{}l@{}} 
            $e_1 \vdash e_1 = e_1 \dashv e_1 = e_1$ \\
            $e_1 \vdash e_2 = e_2 \dashv e_1 = e_2$
        \end{tabular} \\
        \hline
    \end{tabular}
    \caption{Two-dimensional complex dialgebras.}
    \label{Two-dimensional-dias}
\end{table}

    Let $\mathcal{D}$ be an $n$-dimensional dialgebra with basis $\left\{e_1,\cdots,e_n\right\}$ and $\delta\in\pazocal{D}ider(\mathcal{D})$. Representing the diderivation $\delta$ in matrix form $\delta = (d_{ij})$ and considering Theorem \ref{Dider_vs_mult-ops} we may construct the system of equations:
        \begin{equation}\label{System_dider}
            [\delta, L_{e_i}^{\vdash}] = \displaystyle\sum_{j=1}^{n}d_{ij} L_{e_j}^{\dashv}
                \hspace{0.5 cm}
                    \text{ and }
                \hspace{0.5 cm}
            [\delta,R_{e_i}^{\dashv}]=\displaystyle\sum_{j=1}^{n}d_{ij}R_{e_j}^{\vdash},
        \end{equation}
    which helps us to reconstruct the diderivation $\delta$.
 
    Let $\mathcal{D}$ be a two-dimensional dialgebra with basis $\{e_1,e_2\}$, then any diderivation can be written in matrix form as
        \begin{equation*}
            \delta =
                \begin{pmatrix}
                    d_{11} & d_{12} \\
                    d_{21} & d_{22}.
                \end{pmatrix}
        \end{equation*}
    In order to find the coefficients of $\delta$ using System (\ref{System_dider}), we first need to find the matrices $R_{e_i}^{\vdash}, R_{e_i}^{\dashv}, L_{e_i}^{\vdash}$ and $L_{e_i}^{\dashv}$.
 
    Let us start considering the dialgebra $Dias_{2}^1$. From Table \ref{Two-dimensional-dias} we have

\begin{multicols}{4}
    \begin{itemize}
        \item[]
        \begin{equation*}
            \resizebox{0.8\columnwidth}{!}{
                $L_{e_1}^{\dashv}=
                    \begin{pmatrix}
                        1 & 0 \\
                        0 & 0
                    \end{pmatrix}$
            }
        \end{equation*}
        \item[]
        \begin{equation*}
            \resizebox{0.8\columnwidth}{!}{
                $L_{e_2}^{\dashv} =
                    \begin{pmatrix}
                        0 & 0 \\
                        1 & 0
                    \end{pmatrix}$
            }
        \end{equation*}
        \item[]
        \begin{equation*}
            \resizebox{0.8\columnwidth}{!}{
                $L_{e_{1}}^{\vdash}=
                    \begin{pmatrix}
                        1 & 0\\
                        0 & 0
                    \end{pmatrix}$
            }
        \end{equation*}
        \item[]
        \begin{equation*}
            \resizebox{0.8\columnwidth}{!}{
                $L_{e_{2}}^{\vdash}=
                    \begin{pmatrix}
                        0 & 0 \\
                        0 & 0
                    \end{pmatrix}$
            }
        \end{equation*}
        \item[]
        \begin{equation*}
            \resizebox{0.8\columnwidth}{!}{
                $R_{e_1}^{\dashv}=
                    \begin{pmatrix}
                        1 & 0 \\
                        0 & 1
                    \end{pmatrix}$
            }
        \end{equation*}
        \item[]
        \begin{equation*}
            \resizebox{0.8\columnwidth}{!}{
                $R_{e_2}^{\dashv} =
                    \begin{pmatrix}
                        0 & 0 \\
                        0 & 0
                    \end{pmatrix}$
            }
        \end{equation*}
        \item[]
        \begin{equation*}
            \resizebox{0.8\columnwidth}{!}{
                $R_{e_{1}}^{\vdash}=
                    \begin{pmatrix}
                        1 & 0\\
                        0 & 0
                    \end{pmatrix}$
            }
        \end{equation*}
        \item[]
        \begin{equation*}
            \resizebox{0.8\columnwidth}{!}{
                $R_{e_{2}}^{\vdash}=
                    \begin{pmatrix}
                        0 & 0 \\
                        0 & 0
                    \end{pmatrix}$
            }
        \end{equation*}
    \end{itemize}
\end{multicols}
 
    which, together with the previous description of the multiplicative operators, shows that $d_{11}=d_{12}=d_{22}=0$, and therefore $dim_{\mathbb{C}}(\pazocal{D}ider(Dias_{2}^1))=1$. 
     
    For the other three isomorphism classes of two-dimensional dialgebras we have used the code explained in Appendix \ref{Latex_code}.

\begin{table}[h!]
\centering
\begin{tabular}{|p{5.5cm}|c|c|}
\hline
\textbf{Diderivation Space} & \textbf{Base} & \textbf{Dimension} \\
\hline
\multirow{4}{5em}{$\pazocal{D}ider(Dias_2^1)$ \\ $\pazocal{D}ider(Dias_2^2)$ \\ $\pazocal{D}ider(Dias_2^3)$} & & \\
 & 
$\begin{pmatrix} 0 & 0 \\ 1 & 0 \end{pmatrix}$ & 
$1$ \\
& & \\
\hline
$\pazocal{D}ider(Dias_2^4)$ & $\begin{pmatrix} 0 & 0 \\ 0 & 0 \end{pmatrix}$ & 0\\
\hline
\end{tabular}
\caption{Isomorphism classes of the space of diderivations in dimension three.}
\label{Iso_classes_dimension_three}
\end{table}

\begin{remark}
    Actually, for $Dias_{3}^2$ we have 
        \begin{equation*}
            \delta = 
                \begin{pmatrix}
                    d_{11} & 0 \\
                    d_{21} & (1+\lambda)d_{11}
                \end{pmatrix}
        \end{equation*}
    with $\lambda =0$ or $\lambda =1$, but if $\lambda = 1$ then $\delta$ is a derivation (\cite[Main Theorem]{Rakhimov2017}) and for $\lambda=0$ we get $d_{11}=0$ and $\delta\in\mathcal{D}ider(Dias_{2}^3)$.
\end{remark}

\subsection{Diderivations of Three Dimensional Dialgebras}
 
    For the sake of the clarity, in Table \ref{Three-dimensional-dias} (Appendix \ref{Latex_code}) we have given the classification of the three dimensional complex dialgebras given in \cite{Rakhimov2017}.

In the following table, we give the dimensions for the vector space of diderivations of three-dimensional dialgebras. The case $Dias_3^{16}$ presents a significantly higher level of difficulty. 
The corresponding dimension calculations are displayed in Table \ref{tab:Dias_3^16}, 
where they are expressed in terms of structural parameters. 
For the reader’s convenience, the detailed computations together with the underlying reasoning 
have been deferred to Appendix \ref{App:Dias_3^16}.

\begin{center}
\begin{tabular}{ |c|c|c| } 
\hline
\textbf{Diderivation Space} & \textbf{Base} & \textbf{Dimension}\\
\hline
 & & \\
\multirow{3}{4em}{$\pazocal{D}ider(Dias_3^1)$} & $\begin{pmatrix}
    0 & 1 & 0 \\
    0 & 0 & 0 \\
    0 & 0 & 0
\end{pmatrix}$ & $1$ \\
 & & \\
\hline
 &  &  \\ 
\multirow{3}{4em}{$\pazocal{D}ider(Dias_3^2)$ \\ $\pazocal{D}ider(Dias_3^3)$} & $\begin{pmatrix}
    0 & 0 & 0 \\
    0 & 0 & 0 \\
    0 & 0 & 0
\end{pmatrix}$ & $0$  \\ 
& &  \\ 
\hline
 &  &  \\ 
\multirow{3}{4em}{$\pazocal{D}ider(Dias_3^4)$ \\ $\pazocal{D}ider(Dias_3^5)$ \\ $\pazocal{D}ider(Dias_3^7)$} & $\begin{pmatrix}
    1 & 0 & 0 \\
    -1 & 0 & 0 \\
    0 & 0 & 0
\end{pmatrix}$ &  $1$  \\ 
& &  \\ 
\hline
{$\pazocal{D}ider(Dias_3^6)$} & $\begin{pmatrix}
    0 & 0 & 0 \\
    0 & 0 & 0 \\
    0 & 0 & 0
\end{pmatrix}$ & $0$ \\
 & & \\
\hline
 & & \\
\multirow{3}{4em}{$\pazocal{D}ider(Dias_3^8)$} & $\begin{pmatrix}
    0 & 0 & 1 \\
    0 & 0 & 0 \\
    0 & 0 & 0
\end{pmatrix}$ & $1$ \\
 & & \\
\hline
 &  &  \\ 
 $\pazocal{D}ider(Dias_3^9)$ & $\begin{pmatrix}
    0 & 0 & 0 \\
    1 & 1 & 0 \\
    0 & 0 & 0
\end{pmatrix}$ & $1$  \\ 
& &  \\ 
\hline
 &  &  \\ 
\multirow{3}{4em}{$\pazocal{D}ider(Dias_3^{10})$ \\ $\pazocal{D}ider(Dias_3^{11})$} & $\begin{pmatrix}
    1 & 0 & 0 \\
    0 & 0 & 0 \\
    0 & 0 & 0
\end{pmatrix}$, $\begin{pmatrix}
    0 & 0 & 1 \\
    0 & 0 & 0 \\
    0 & 0 & 0
\end{pmatrix}$   &  $2$   \\ 
& &  \\ 
\hline
\end{tabular}
\end{center}

\begin{center}
\begin{tabular}{ |c|c|c| } 
\hline
\textbf{Diderivation Space} & \textbf{Base} & \textbf{Dimension}\\
\hline
 & & \\
$\pazocal{D}ider(Dias_3^{12})$& $\begin{pmatrix}
    1 & 0 & 0 \\
    0 & 0 & 0 \\
    0 & 0 & 0
\end{pmatrix}$  & $1$   \\ 
& &  \\
\hline
 & & \\
\multirow{3}{4em}
 &  &  \\ 
\multirow{3}{4em}{$\pazocal{D}ider(Dias_3^{13})$ \\ $\pazocal{D}ider(Dias_3^{14})$} & $\begin{pmatrix}
    1 & 0 & 0 \\
    0 & 0 & 0 \\
    0 & 0 & 0
\end{pmatrix}$, $\begin{pmatrix}
    0 & 0 & 0 \\
    1 & 0 & 0 \\
    0 & 0 & 0
\end{pmatrix}$   &  $2$   \\ 
& &  \\
\hline
 & & \\
$\pazocal{D}ider(Dias_3^{15})$& $\begin{pmatrix}
    0 & 0 & 0 \\
    0 & 0 & 0 \\
    0 & 0 & 0
\end{pmatrix}$  & $0$   \\ 
& &  \\
\hline
 & & \\
$\pazocal{D}ider(Dias_3^{17})$ & $\begin{pmatrix}
    0 & 0 & 0 \\
    0 & 0 & 0 \\
    1 & 0 & 0
\end{pmatrix}$  & $1$   \\ 
& &  \\
\hline
\end{tabular}
\end{center}

\section{Compact case table $Dias_3^{16}$}\label{tab:Dias_3^16}

\small
\renewcommand{\arraystretch}{1}
\begin{center}
\begin{tabular}{|c|c|l|c|}
\hline
\textbf{Branch} & \textbf{Conditions} & \textbf{Description} & \textbf{Dim} \\
\hline
$m\neq 0$ & $\Deltauno,\Deltados\neq 0$ & Generic; only $d_{21},d_{23}$ survive & $2$ \\
\hline
$m\neq 0$ & $\Deltauno=0$, $\Deltados\neq 0$ & One compat.~constraint & $3$ \\
\hline
$m\neq 0$ & $\Deltauno\neq 0$, $\Deltados=0$ & One compat.~constraint & $3$ \\
\hline
$m\neq 0$ & $\Deltauno=\Deltados=0$ & Two compat.~constraints & $4$ \\
\hline
$m\neq 0$ & add $p=-1,q=0$ & \eqref{eq:R3a}(2) drops $\Rightarrow$ +1 dim & $+1$ \\
\hline
$m=0$ & $n+k\neq 0$, $k\neq np$ & Generic in branch & $2$ \\
\hline
$m=0$ & $n+k\neq 0$, $k=np$ & Extra freedom in $(d_{11},d_{33})$ & $3$ \\
\hline
$m=0$ & prev.\ + $p=-1,q=0$ & \eqref{eq:R3a}(2) drops $\Rightarrow$ +1 dim & $4$ \\
\hline
$m=0$ & $n+k=0$, $q\neq 0$ & $d_{31}$ unconstrained; generic & $3$ \\
\hline
$m=0$ & $n+k=0$, $q\neq 0$, $k=-1$ & Also $n=1$ $\Rightarrow$ $(1-n)d_{22}=0$ void & $4$ \\
\hline
$m=0$ & $n+k=0$, $q=0$ & \eqref{eq:R3a}(2) drops; generic & $4$ \\
\hline
$m=n=k=q=0$ & $p\neq -1$ & Free: $d_{21}, d_{23}, d_{31}, d_{32}, d_{33}$ & $5$ \\
\hline
$m=n=k=q=0$ & $p=-1$ & Also $d_{13}$ free & $6$ \\
\hline
\end{tabular}
\end{center}
\normalsize

\section{The Leibniz Algebra $\pazocal{B}ider(\mathcal{D})$}\label{Sec:bider}
 
    Let us now introduce the Leibniz algebra associated to derivations and diderivations over dialgebras. The construction is based on the fact that $\pazocal{D}ider(\mathcal{D})$ is a $\pazocal{D}er(\mathcal{D})$-bimodule when considering the actions $\delta\cdot d = [\delta,d]$ and $d\cdot \delta =[d,\delta]$, where $[\cdot ,\cdot]$ is the canonical bracket of $\mathfrak{gl}(\mathcal{D})$.
 
    In the previous context, if we had an equivariant map $\varphi:\pazocal{D}ider(\mathcal{D})\rightarrow \pazocal{D}er(\mathcal{D})$, then it would be possible to endow $\pazocal{D}ider(\mathcal{D})$ with a Leibniz algebra structure. Given that so far there is not such an equivariant map, then we need to extend the space to a bigger bimodule where an equivariant map can be built. To do that, let us consider the space $\pazocal{D}ider(\mathcal{D})\oplus \pazocal{D}er(\mathcal{D})$ which is a $\pazocal{D}er(\mathcal{D})$-bimodule under the actions $(\delta + d)\cdot d' \coloneqq [\delta, d']+[d,d']$ and $d'\cdot (\delta + d) \coloneqq [d',\delta]+[d',d]$. The canonical projection $\pi: \pazocal{D}ider(\mathcal{D})\oplus \pazocal{D}er(\mathcal{D})\rightarrow \pazocal{D}er(\mathcal{D})$ is the sought equivariant map.

\begin{theorem}\label{thm:Bider_Leib}
    Let $\mathcal{D}$ be a dialgebra, then $(\pazocal{B}ider(\mathcal{D}), \langle \bullet,\bullet\rangle)$ is a Leibniz algebra, where the bracket is defined by 
    \begin{align*}
            \left\langle \bullet ,\bullet\right\rangle: &\ \pazocal{B}ider(\mathcal{D}) \times \pazocal{B}ider(\mathcal{D})  \to\ \pazocal{B}ider(\mathcal{D}) \\
                    &\ ( \tilde{d} \oplus  d, \tilde{d}' \oplus  d') \to \left\langle  \tilde{d} \oplus  d, \tilde{d}' \oplus  d'\right\rangle \coloneqq [ \tilde{d}, d']\oplus[ d, d'].
        \end{align*}
\end{theorem}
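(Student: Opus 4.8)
The plan is to check directly the three things a Leibniz bracket must satisfy: that $\langle\bullet,\bullet\rangle$ lands in $\pazocal{B}ider(\mathcal{D})$, that it is bilinear, and that it obeys the left Leibniz identity; the last point will be reduced to the Jacobi identity in $\mathfrak{gl}(\mathcal{D})$, applied separately in each summand.

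First I would verify well-definedness. Given $\tilde d\oplus d$ and $\tilde d'\oplus d'$ in $\pazocal{B}ider(\mathcal{D})=\pazocal{D}ider(\mathcal{D})\oplus\pazocal{D}er(\mathcal{D})$, the second component $[d,d']$ lies in $\pazocal{D}er(\mathcal{D})$ because $\pazocal{D}er(\mathcal{D})$ is a Lie subalgebra of $\mathfrak{gl}(\mathcal{D})$ (Lemma~\ref{Lem:Der_subliealg}), and the first component $[\tilde d,d']$ lies in $\pazocal{D}ider(\mathcal{D})$ by the inclusion $[\pazocal{D}ider(\mathcal{D}),\pazocal{D}er(\mathcal{D})]\subseteq\pazocal{D}ider(\mathcal{D})$ proved above. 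Hence $\langle\tilde d\oplus d,\tilde d'\oplus d'\rangle=[\tilde d,d']\oplus[d,d']\in\pazocal{B}ider(\mathcal{D})$. Bilinearity is immediate from bilinearity of the commutator in $\mathfrak{gl}(\mathcal{D})$.

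For the left Leibniz identity, fix $X=\tilde d\oplus d$, $Y=\tilde d'\oplus d'$, $Z=\tilde d''\oplus d''$ in $\pazocal{B}ider(\mathcal{D})$. Unwinding the definition,
\[
\langle X,\langle Y,Z\rangle\rangle=[\tilde d,[d',d'']]\oplus[d,[d',d'']],
\]
while
\[
\langle\langle X,Y\rangle,Z\rangle-\langle\langle X,Z\rangle,Y\rangle=\bigl([[\tilde d,d'],d'']-[[\tilde d,d''],d']\bigr)\oplus\bigl([[d,d'],d'']-[[d,d''],d']\bigr).
\]
Note that only the $\pazocal{D}er(\mathcal{D})$-components $d',d''$ of $Y$ and $Z$ ever enter both expressions, which is precisely what makes the two sides comparable. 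Equating components, it then suffices to prove $[a,[b,c]]=[[a,b],c]-[[a,c],b]$ in $\mathfrak{gl}(\mathcal{D})$, once with $(a,b,c)=(\tilde d,d',d'')$ and once with $(a,b,c)=(d,d',d'')$. But that identity is just the Jacobi identity for the commutator bracket rewritten via antisymmetry, i.e. the fact that every Lie algebra is a left Leibniz algebra; so both component equalities hold and the identity follows.

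Finally I would record that $\langle\bullet,\bullet\rangle$ is genuinely a Leibniz and not a Lie bracket in general, since $\langle X,X\rangle=[\tilde d,d]\oplus 0$ need not vanish. I do not expect a serious obstacle in this argument: the entire Leibniz identity collapses to Jacobi in $\mathfrak{gl}(\mathcal{D})$. The one point that must not be skipped is the closure/well-definedness step, which is exactly why the theorem is stated after establishing $[\pazocal{D}ider(\mathcal{D}),\pazocal{D}er(\mathcal{D})]\subseteq\pazocal{D}ider(\mathcal{D})$ and Lemma~\ref{Lem:Der_subliealg}; without these the bracket would not even be defined on $\pazocal{B}ider(\mathcal{D})$.
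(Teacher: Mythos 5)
Your proposal is correct and follows essentially the same route as the paper: expand the three nested brackets componentwise and reduce the left Leibniz identity to the Jacobi identity for the commutator in $\mathfrak{gl}(\mathcal{D})$. The only difference is that you make the closure step (via Lemma~\ref{Lem:Der_subliealg} and $[\pazocal{D}ider(\mathcal{D}),\pazocal{D}er(\mathcal{D})]\subseteq\pazocal{D}ider(\mathcal{D})$) explicit, which the paper leaves implicit in the preceding discussion of the bimodule structure.
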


\begin{proof}
    Let $\delta_1, \delta_2, \delta_3\in \pazocal{D}ider(\mathcal{D})$ and $d_1, d_2, d_3 \in\pazocal{D}er(\mathcal{D})$, then

        \begin{description}
            \item [i)]
                \begin{align*}
                    \left\langle \left\langle  \delta_1\oplus  d_1, \delta_2\oplus  d_2\right\rangle, \delta_3\oplus  d_3\right\rangle & = \left\langle [ \delta_1, d_2]\oplus[ d_1, d_2], \delta_3\oplus  d_3\right\rangle \\
                    & = [[ \delta_1, d_2], d_3]\oplus[[ d_1, d_2], d_3]
                \end{align*}
            
            \item [ii)]
                \begin{align*}
                    \left\langle \left\langle  \delta_1\oplus  d_1, \delta_3\oplus  d_3\right\rangle, \delta_2\oplus  d_2\right\rangle & = \left\langle [ \delta_1, d_3]\oplus[ d_1, d_3], \delta_2\oplus  d_2\right\rangle\\
                    & = [[ \delta_1, d_3], d_2]\oplus[[ d_1, d_3], d_2]
                \end{align*}
                
            \item[iii)] 
                \begin{align*}
                    \left\langle   \delta_1\oplus  d_1,\left\langle  \delta_2\oplus  d_2, \delta_3\oplus  d_3\right\rangle\right\rangle & = \left\langle  \delta_1\oplus d_1,[ \delta_2,d_3]\oplus[d_2,d_3]\right\rangle \\
                    & = [ \delta_1,[d_2,d_3]]\oplus[d_1,[d_2,d_3]]
                \end{align*}

\end{description}
    In light of the skew-symmetry property of Jacobi's identity, it is clear that  $[[ \delta_1, d_2], d_3]=[[ \delta_1, d_3], d_2]+[ \delta_1,[ d_2, d_3]]$ and $[[ d_1, d_2], d_3]=[[ d_1, d_3], d_2]+[ d_1,[ d_2, d_3]]$. Using this identity in $\textbf{i)}, \textbf{ii)}$ and $\textbf{iii)}$, we have
        \begin{align*}
            \left\langle \left\langle  \tilde{d}_1\oplus  d_1, \tilde{d}_2\oplus  d_2\right\rangle, \tilde{d}_3\oplus  d_3\right\rangle
            & = \left\langle \left\langle  \tilde{d}_1\oplus  d_1, \tilde{d}_3\oplus  d_3\right\rangle, \tilde{d}_2\oplus  d_2\right\rangle+ \\
            & \hspace{0.4 cm} \left\langle   \tilde{d}_1\oplus  d_1,\left\langle  \tilde{d}_2\oplus  d_2, \tilde{d}_3\oplus  d_3\right\rangle\right\rangle
        \end{align*}
    and therefore $(\pazocal{B}ider(\mathcal{D}), \langle \bullet,\bullet\rangle)$ is a Leibniz algebra.
\end {proof}
 
    From Theorems \ref{derL*} and \ref{derR*} we have the following result.

\begin{lemma}
    The subspaces $\pazocal{D}\mathcal{I}nn(\mathcal{D})\oplus \pazocal{D}er(\mathcal{D})$ and $\pazocal{D}\mathcal{I}nn(\mathcal{D})\oplus \mathcal{I}nn(\mathcal{D})$ are ideals of $\pazocal{B}ider(\mathcal{D})$. Moreover $\pazocal{B}ider(\mathcal{D})^{ann}\subseteq \pazocal{D}ider(\mathcal{D})\oplus \{0\}$.
\end{lemma}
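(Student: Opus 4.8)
The plan is to deduce everything from the explicit bracket formula $\langle \tilde d\oplus d,\tilde d'\oplus d'\rangle=[\tilde d,d']\oplus[d,d']$ of Theorem~\ref{thm:Bider_Leib}, so that each claim becomes a componentwise check that reduces to a commutator inclusion among the subspaces $\pazocal{D}er(\mathcal{D})$, $\mathcal{I}nn(\mathcal{D})$, $\pazocal{D}ider(\mathcal{D})$, $\pazocal{D}\mathcal{I}nn(\mathcal{D})$ of $\mathfrak{gl}(\mathcal{D})$. It is worth noting from the outset that this bracket is asymmetric: its $\pazocal{D}ider$-component involves only the diderivation part of the \emph{left} factor and the derivation part of the \emph{right} factor, while its $\pazocal{D}er$-component involves only the derivation parts; hence the two absorption conditions defining an ideal must be handled separately.

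First I would collect the commutator inclusions already available: $[\pazocal{D}er(\mathcal{D}),\pazocal{D}er(\mathcal{D})]\subseteq\pazocal{D}er(\mathcal{D})$ (Lemma~\ref{Lem:Der_subliealg}), $[\pazocal{D}er(\mathcal{D}),\mathcal{I}nn(\mathcal{D})]\subseteq\mathcal{I}nn(\mathcal{D})$ (Theorem~\ref{thm:Inn_ideal_Der}), together with $[\pazocal{D}ider(\mathcal{D}),\pazocal{D}er(\mathcal{D})]\subseteq\pazocal{D}ider(\mathcal{D})$ and $[\pazocal{D}\mathcal{I}nn(\mathcal{D}),\pazocal{D}er(\mathcal{D})]\subseteq\pazocal{D}\mathcal{I}nn(\mathcal{D})$ proved earlier in this section. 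Feeding these into the bracket formula gives at once the absorptions $\langle\,\pazocal{D}\mathcal{I}nn(\mathcal{D})\oplus\pazocal{D}er(\mathcal{D}),\,\pazocal{B}ider(\mathcal{D})\,\rangle\subseteq\pazocal{D}\mathcal{I}nn(\mathcal{D})\oplus\pazocal{D}er(\mathcal{D})$ and $\langle\,\pazocal{D}\mathcal{I}nn(\mathcal{D})\oplus\mathcal{I}nn(\mathcal{D}),\,\pazocal{B}ider(\mathcal{D})\,\rangle\subseteq\pazocal{D}\mathcal{I}nn(\mathcal{D})\oplus\mathcal{I}nn(\mathcal{D})$: writing the left factor as $\tilde d\oplus d$ with $\tilde d\in\pazocal{D}\mathcal{I}nn(\mathcal{D})$, the first slot of $[\tilde d,d']\oplus[d,d']$ lies in $[\pazocal{D}\mathcal{I}nn(\mathcal{D}),\pazocal{D}er(\mathcal{D})]\subseteq\pazocal{D}\mathcal{I}nn(\mathcal{D})$ and the second in $[\pazocal{D}er(\mathcal{D}),\pazocal{D}er(\mathcal{D})]\subseteq\pazocal{D}er(\mathcal{D})$, resp.\ in $[\pazocal{D}er(\mathcal{D}),\mathcal{I}nn(\mathcal{D})]\subseteq\mathcal{I}nn(\mathcal{D})$. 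For $\pazocal{D}\mathcal{I}nn(\mathcal{D})\oplus\pazocal{D}er(\mathcal{D})$ this is the pertinent absorption, since the reverse one would require $[\pazocal{D}ider(\mathcal{D}),\pazocal{D}er(\mathcal{D})]\subseteq\pazocal{D}\mathcal{I}nn(\mathcal{D})$, which need not hold.

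The only genuinely new ingredient arises in the reverse absorption for $\pazocal{D}\mathcal{I}nn(\mathcal{D})\oplus\mathcal{I}nn(\mathcal{D})$: with $\tilde d'\oplus d'\in\pazocal{B}ider(\mathcal{D})$ and $\tilde d\oplus d$ such that $d\in\mathcal{I}nn(\mathcal{D})$, one has $\langle\tilde d'\oplus d',\tilde d\oplus d\rangle=[\tilde d',d]\oplus[d',d]$, whose second slot lies in $[\pazocal{D}er(\mathcal{D}),\mathcal{I}nn(\mathcal{D})]\subseteq\mathcal{I}nn(\mathcal{D})$ but whose first slot only obviously lies in $\pazocal{D}ider(\mathcal{D})$. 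One therefore needs the sharper inclusion $[\pazocal{D}ider(\mathcal{D}),\mathcal{I}nn(\mathcal{D})]\subseteq\pazocal{D}\mathcal{I}nn(\mathcal{D})$, which is the step I expect to demand actual work; I would derive it from the operator characterization of Theorem~\ref{Dider_vs_mult-ops}(c): for $\delta\in\pazocal{D}ider(\mathcal{D})$ and $ad_a=R^{\dashv}_a-L^{\vdash}_a\in\mathcal{I}nn(\mathcal{D})$,
\[
[\delta,ad_a]=[\delta,R^{\dashv}_a]-[\delta,L^{\vdash}_a]=R^{\vdash}_{\delta(a)}-L^{\dashv}_{\delta(a)}=Ad_{\delta(a)}\in\pazocal{D}\mathcal{I}nn(\mathcal{D}).
\]

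For the last assertion I would recall that $\pazocal{B}ider(\mathcal{D})^{ann}$, the smallest ideal whose quotient is a Lie algebra, is the linear span of the squares $\langle X,X\rangle$, $X\in\pazocal{B}ider(\mathcal{D})$ (equivalently of the symmetrizations $\langle X,Y\rangle+\langle Y,X\rangle$), and that this span is automatically an ideal. For $X=\tilde d\oplus d$ the bracket formula yields $\langle X,X\rangle=[\tilde d,d]\oplus[d,d]=[\tilde d,d]\oplus 0$, since $[d,d]=0$ in $\mathfrak{gl}(\mathcal{D})$; hence every generator, and so all of $\pazocal{B}ider(\mathcal{D})^{ann}$, lies in $\pazocal{D}ider(\mathcal{D})\oplus\{0\}$. (Alternatively, $\pazocal{D}ider(\mathcal{D})\oplus\{0\}$ is itself a two-sided ideal --- $\langle\tilde d\oplus 0,\tilde d'\oplus d'\rangle=[\tilde d,d']\oplus 0$ by $[\pazocal{D}ider(\mathcal{D}),\pazocal{D}er(\mathcal{D})]\subseteq\pazocal{D}ider(\mathcal{D})$, while $\langle\tilde d'\oplus d',\tilde d\oplus 0\rangle=0\oplus 0$ --- and it contains all squares, hence the ideal they generate.) Apart from pinning down the identity $[\delta,ad_a]=Ad_{\delta(a)}$, the rest is routine bookkeeping on the bracket.
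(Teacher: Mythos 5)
The paper states this lemma with no proof at all (it is introduced only by the remark that it follows ``from Theorems \ref{derL*} and \ref{derR*}''), so there is no argument of the authors to compare yours against; judged on its own, your proposal is correct and supplies exactly the missing details. The componentwise reduction through the bracket formula is the natural route, and your one genuinely new ingredient --- the identity $[\delta,ad_a]=[\delta,R^{\dashv}_a]-[\delta,L^{\vdash}_a]=R^{\vdash}_{\delta(a)}-L^{\dashv}_{\delta(a)}=Ad_{\delta(a)}$, i.e.\ the inclusion $[\pazocal{D}ider(\mathcal{D}),\mathcal{I}nn(\mathcal{D})]\subseteq\pazocal{D}\mathcal{I}nn(\mathcal{D})$ --- is correctly read off from Theorem \ref{Dider_vs_mult-ops}(c) and is strictly sharper than the inclusion $[\pazocal{D}\mathcal{I}nn(\mathcal{D}),\mathcal{I}nn(\mathcal{D})]\subseteq\pazocal{D}\mathcal{I}nn(\mathcal{D})$ recorded earlier in the paper. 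It is indispensable for the two-sided absorption of $\pazocal{D}\mathcal{I}nn(\mathcal{D})\oplus\mathcal{I}nn(\mathcal{D})$, and the theorems the paper cites would not by themselves deliver it. Your handling of the annihilator is also sound: either via the squares $\langle X,X\rangle=[\tilde d,d]\oplus 0$, or via the observation that $\pazocal{D}ider(\mathcal{D})\oplus\{0\}$ is itself a two-sided ideal containing them --- with the understanding that $\pazocal{B}ider(\mathcal{D})^{ann}$ denotes the Leibniz kernel, a notion the paper uses here but never formally defines for Leibniz algebras.

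The one point you should state plainly rather than leave as an aside is that your argument establishes $\pazocal{D}\mathcal{I}nn(\mathcal{D})\oplus\pazocal{D}er(\mathcal{D})$ only as a \emph{one-sided} ideal: the absorption $\langle I,\pazocal{B}ider(\mathcal{D})\rangle\subseteq I$ holds, but for $\langle\pazocal{B}ider(\mathcal{D}),I\rangle$ the first component is $[\tilde d',d]$ with $\tilde d'$ an arbitrary diderivation and $d$ an arbitrary derivation, which lands only in $\pazocal{D}ider(\mathcal{D})$; as you observe, two-sidedness would require $[\pazocal{D}ider(\mathcal{D}),\pazocal{D}er(\mathcal{D})]\subseteq\pazocal{D}\mathcal{I}nn(\mathcal{D})$, for which there is no reason in general. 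This is a defect of the statement (or an unstated one-sided convention for ``ideal'') rather than of your proof, but since the lemma asserts that both subspaces are ideals on an equal footing, your write-up should say explicitly which absorption it verifies for each and where the obstruction lies.
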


\section{Derivations and Diderivations of the Dialgebra $K[x,y]$}\label{Sec:example}
 
    Finally, we present an example of a dialgebra, and we classify both its derivations and diderivations. We consider the dialgebra introduced by L. Lin and Y. Zhang in \cite{Lin2010} generated by polynomials in two commuting variables $K[x,y]$.
 
    Let us start by defining the products $\vdash$ and $\dashv$:
    
        \begin{align*}
            f(x,y)\dashv g(x,y)=f(x,y)g(y,y),
            \hspace{0.3 cm}
            f(x,y)\vdash g(x,y)=f(x,x)g(x,y).
        \end{align*}
    An easy calculation shows that $(K[x,y],\vdash, \dashv)$ is a dialgebra, \cite[Theorem 1.2.1]{Lin2010}. Moreover, it is clear that $\left\{x^my^n\; | \;m,n\in \mathbb{N} \right\}$ is a basis for the dialgebra $K[x,y]$.
 
    Now, since the constant polynomial $e(x,y)=1$ is a bar unit of $K[x,y]$:
        \begin{align*}
            f(x,y)\dashv e(x,y)=f(x,y)e(y,y)=f(x,y)1=f(x,y),\\ \\
            e(x,y)\vdash f(x,y)=e(x,x)f(x,y)=1f(x,y)=f(x,y),
        \end {align*}
    we have that $(K[x,y],\vdash,\dashv)$ is a unital dialgebra, and by Lemma \ref{annzb} we may conclude that
        \begin{align*}
            (K[x,y])^{ann}=Z_B(K[x,y])&=\left\{f(x,y)\;|\; 1\dashv f(x,y)=0 \ \text{and} \ f(x,y)\vdash 1=0 \right\}\\
            &=\left\{f(x,y)\; |\; f(x,x)=f(y,y)=0\right\},
        \end{align*}
    in other words, the ideal $(K[x,y])^{ann}$ is generated by the polynomial $(x-y)$. According to Lemma \ref{annzb} we have
        \begin{equation*}
            \mathcal{H}(\mathcal{D})=\left\{1+(y-x)h(x,y)\; | \; h(x,y)\in K[x,y] \right\}.
        \end{equation*}
    This property was also established by L. Lin and Y. Zhang in \cite[Lemma 1.2.5]{Lin2010}.
 
    Let us recall the results about derivations in $K[x,y]$ proved by L. Lin and Y. Zhang. On the one hand, by definition, we know that $x\dashv x = x\dashv y$, $y\dashv y=y\dashv x$, $x\dashv x=x\dashv y$, $x\vdash 1=y\vdash 1=x$ and $1\dashv x =1\dashv y=y$. Therefore
        \begin{align*}
            x^my^n & = x^m\,1\,y^n \\
            & = x^m\vdash 1 \dashv x^n \\
            & = \underbrace{x\vdash x \vdash \cdots \vdash x}_{m}\vdash 1 \dashv \underbrace{x\dashv x\cdots \dashv x}_{n}.
        \end{align*}
 
    On the other hand, if $d\in \pazocal{D}er(K[x,y])$ then
        \begin{equation*}
            d(x)=d(x\vdash 1)=d(x)\vdash 1+x\vdash d(1)=d(x)\vdash 1+x\,d(1).
        \end{equation*}
    Since $d(1)\in (K[x,y])^{ann}$ (see the argument before \cite[Theorem 2.13]{Lin2010}), there exists $g(x,y)\in K[x,y]$ such that $d(1)=(x-y)g(x,y)$. Considering $d(x)=f(x,y)$, then $f(x,y)=f(x,x)+x[(x-y)g(x,y)] $ and we have
        \begin{align*}
            d(x^my^n)=&d(x\vdash x \vdash \cdots \vdash x\vdash 1 \dashv x\dashv x\cdots\dashv x)\\
            = & (d(x)\vdash x \vdash \cdots \vdash x\vdash 1  \dashv x\dashv x\cdots\dashv x)+ \\
            & \hspace{0.2 cm} \cdots + (x\vdash x \vdash \cdots \vdash x\vdash d(1) \dashv x\dashv x\cdots\dashv x)\\
            &+(x\vdash x \vdash \cdots \vdash x\vdash 1 \dashv d(x)\dashv x\cdots\dashv x)+ \\
            & \hspace{0.2 cm} \cdots + (x\vdash x \vdash \cdots \vdash x\vdash 1 \dashv x\dashv x\cdots\dashv d(x))\\
            &=mx^{m-1}y^nf(x,x)+x^my^nd(1)+nx^my^{n-1}f(y,y).
        \end{align*}
    From this identity L. Lin and Y. Zhang proved the following theorems. 

\begin{theorem}\cite[Theorem 2.3.2]{Lin2010}\label{Char_der_Lin-1}
    Let $d\in \pazocal{D}er(F[x,y])$. There exist $f(x)\in K[x]$ and $g(x,y)\in K[x,y]$, such that
        \begin{equation}\label{DF}
            d(x^my^n)=mx^{m-1}y^nf(x)+x^my^n(x-y)g(x,y)+nx^my^{n-1}f(y).
        \end{equation}
    Conversely, for any $f(x)\in K[x]$ and $g(x,y)\in K[x,y]$, the linear transformation defined by Equation (\ref{DF}) is a derivation of $K[x,y]$.
\end{theorem}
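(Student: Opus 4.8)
The plan is to prove the two implications separately. The forward implication is in essence the computation made just above the statement: given $d\in\pazocal{D}er(K[x,y])$, set $f(x,y):=d(x)$; since $d(1)$ lies in $(K[x,y])^{ann}$, which is the ideal generated by $x-y$, write $d(1)=(x-y)g(x,y)$ with $g\in K[x,y]$, and combine the relation $f(x,y)=f(x,x)+x(x-y)g(x,y)$ with repeated use of the Leibniz rule applied to the factorization
\[
x^my^n=\underbrace{x\vdash\cdots\vdash x}_{m}\vdash 1\dashv\underbrace{x\dashv\cdots\dashv x}_{n},
\]
which gives $d(x^my^n)=mx^{m-1}y^n f(x,x)+x^my^n\,d(1)+nx^my^{n-1}f(y,y)$. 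Writing $f(x,x)=\sum_k c_k x^k$ and declaring $f(x):=\sum_k c_k x^k\in K[x]$ (so that the same univariate polynomial gives $f(y,y)=f(y)$), this is exactly Equation~\eqref{DF}; since $\{x^my^n\}$ is a basis, $d$ is recovered from $f$ and $g$, proving existence.

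For the converse, the idea is to display the linear map $d$ determined by $f,g$ through \eqref{DF} as a sum of two operators that are manifestly derivations, and then invoke that $\pazocal{D}er(K[x,y])$ is a vector space. Write $d=D_f+M_g$, where $D_f:=f(x)\,\partial_x+f(y)\,\partial_y$ is the first-order differential operator on $K[x,y]$ and $M_g$ is multiplication by $(x-y)g(x,y)$; evaluating on $x^my^n$ yields $D_f(x^my^n)=mx^{m-1}y^nf(x)+nx^my^{n-1}f(y)$ and $M_g(x^my^n)=x^my^n(x-y)g(x,y)$, so $D_f+M_g$ coincides with \eqref{DF}. It then remains to check that $D_f$ and $M_g$ are each derivations of $K[x,y]$ for both products $\vdash$ and $\dashv$.

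Both checks rest on the observation that in $u\dashv v=u(x,y)\,v(y,y)$ the right-hand factor has been specialized along $x=y$, and in $u\vdash v=u(x,x)\,v(x,y)$ the left-hand factor has been specialized along $x=y$; in either case the specialized factor is a polynomial in a single variable. For $M_g$ the specialization sends $x-y$ to $0$, so of the two summands in $M_g(u)\ast v+u\ast M_g(v)$ exactly one survives, and it equals $M_g(u\ast v)=(x-y)g\cdot(u\ast v)$. For $D_f$ one notes that $u\dashv v$ and $u\vdash v$ are each a product of an arbitrary polynomial with a single-variable polynomial; applying the ordinary Leibniz rule for $\partial$ together with the chain-rule identities $\tfrac{d}{dx}\bigl(h(x,x)\bigr)=(\partial_xh)(x,x)+(\partial_yh)(x,x)$ and $\tfrac{d}{dy}\bigl(h(y,y)\bigr)=(\partial_xh)(y,y)+(\partial_yh)(y,y)$, the result splits precisely into $D_f(u)\ast v$ and $u\ast D_f(v)$. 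The main obstacle is bookkeeping rather than conceptual: one must keep straight which variable is identified in $\vdash$ versus $\dashv$, handle the degenerate cases $m=0$ or $n=0$ so that no negative exponents appear, and make sure the $f(x)$ and $f(y)$ in the two outer terms of \eqref{DF} arise from one and the same univariate polynomial, since a single slip there produces a map that fails the Leibniz rule even though the correct statement holds. A fully mechanical alternative, bypassing the decomposition, is to verify $d(u\ast v)=d(u)\ast v+u\ast d(v)$ directly on $u=x^my^n$, $v=x^py^q$, using $x^my^n\dashv x^py^q=x^my^{n+p+q}$ and $x^my^n\vdash x^py^q=x^{m+n+p}y^q$ together with the diagonal evaluations $d(x^py^q)(y,y)=(p+q)y^{p+q-1}f(y)$ and $d(x^my^n)(x,x)=(m+n)x^{m+n-1}f(x)$.
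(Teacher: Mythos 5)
Your argument is correct. For the forward implication you follow exactly the route the paper itself takes in the discussion preceding the theorem: extract $g$ from $d(1)\in(K[x,y])^{ann}=(x-y)K[x,y]$, set $f(x,y)=d(x)$, apply the Leibniz rule along the factorization $x^my^n=x_\vdash^m\vdash 1\dashv x_\dashv^n$ to land on $d(x^my^n)=mx^{m-1}y^nf(x,x)+x^my^nd(1)+nx^my^{n-1}f(y,y)$, and observe that $f(x,x)$ and $f(y,y)$ come from one univariate polynomial. Where you genuinely add something is the converse: the paper stops at the displayed identity and defers both directions to the citation of Lin--Zhang, whereas you give a self-contained verification by splitting the candidate map as $d=D_f+M_g$ with $D_f=f(x)\partial_x+f(y)\partial_y$ and $M_g$ multiplication by $(x-y)g$. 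Both pieces check out: for $M_g$ the specialization $x\mapsto y$ (resp.\ $y\mapsto x$) kills $x-y$ so exactly one summand of the Leibniz expansion survives, and for $D_f$ the chain-rule identities for $h(x,x)$ and $h(y,y)$ make the two halves of $D_f(u\ast v)$ match $D_f(u)\ast v$ and $u\ast D_f(v)$; since $\pazocal{D}er(K[x,y])$ is closed under sums, $d$ is a derivation. This decomposition is cleaner than a brute-force monomial check and also makes transparent why $\pazocal{D}er(K[x,y])$ is parametrized by the pair $(f,g)$. The only caveats are the bookkeeping ones you already flag yourself (degenerate exponents $m=0$ or $n=0$, and using the same $f$ in both outer terms), none of which is a gap.
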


\begin{theorem}\cite[Theorem 2.3.3]{Lin2010}
    Let $d\in \mathcal{I}nn(K[x,y])$, then there exist $h(x)\in K[x]$,  $g(x,y)\in K[x,y]$ and $f(x)=0$, such that
        \begin{equation}\label{DIF}
            (x-y)g(x,y)=h(y)-h(x)
        \end{equation} 
    and its image over the canonical basis of $K[x,y]$ is given by Identity(\ref{DF}). 
 
    Conversely, let $h(x)\in K[x]$, $g(x,y)\in K[x,y]$ and $f(x)=0$, such that (\ref{DIF}) holds, then the linear transformation $d$ generated by (\ref{DF}) is an inner derivation of $K[x,y]$. 
\end{theorem}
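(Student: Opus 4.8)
The plan is to compute the inner derivation $ad_a$ explicitly in the dialgebra $K[x,y]$ and simply read off the parameters $h$, $g$, $f$ appearing in the statement. Recall that $d\in\mathcal{I}nn(K[x,y])$ means $d=ad_a=R^{\dashv}_a-L^{\vdash}_a$ for some $a=a(x,y)\in K[x,y]$, so that $d(b)=b\dashv a-a\vdash b$ for every $b$. Substituting the two products, $b\dashv a=b(x,y)\,a(y,y)$ and $a\vdash b=a(x,x)\,b(x,y)$, so
\[
ad_a(b)=b(x,y)\bigl(a(y,y)-a(x,x)\bigr).
\]
Setting $h(x):=a(x,x)\in K[x]$, the restriction of $a$ to the diagonal, and evaluating on the canonical basis gives $d(x^my^n)=x^my^n\,(h(y)-h(x))$.

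Next I would observe that $h(y)-h(x)$ vanishes on $x=y$, hence is divisible by $x-y$ in $K[x,y]$; let $g(x,y)$ be the unique polynomial with $(x-y)g(x,y)=h(y)-h(x)$, which is precisely identity (\ref{DIF}). Substituting this back yields $d(x^my^n)=x^my^n(x-y)g(x,y)$, i.e.\ exactly the right-hand side of (\ref{DF}) with $f\equiv 0$. This is consistent with Theorem~\ref{Char_der_Lin-1}, where the one-variable polynomial $f$ attached to a derivation is the diagonal restriction of $d(x)$: here $d(x)=x(h(y)-h(x))$ restricts to $0$ on $x=y$, which is what forces $f\equiv 0$ for an inner derivation. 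This establishes the first assertion.

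For the converse, suppose $h(x)\in K[x]$, $g(x,y)\in K[x,y]$ and $f\equiv 0$ are given with $(x-y)g(x,y)=h(y)-h(x)$, and let $d$ be the linear map defined by (\ref{DF}), so that $d(x^my^n)=x^my^n(x-y)g(x,y)=x^my^n(h(y)-h(x))$. I would then take $a:=h$ regarded as an element of $K[x,y]$ (any polynomial whose diagonal restriction equals $h$ serves equally well). Then $a(x,x)=h(x)$ and $a(y,y)=h(y)$, so the formula above gives $ad_a(x^my^n)=x^my^n(h(y)-h(x))=d(x^my^n)$ for all $m,n$; since $\{x^my^n\}$ is a basis, $d=ad_a\in\mathcal{I}nn(K[x,y])$, and in particular $d$ is a derivation.

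The computations are entirely routine once the product formulas are in place. The only point demanding a little care is the bookkeeping that matches the intrinsic description $d=ad_a$ with the parametrization of Theorem~\ref{Char_der_Lin-1}—specifically, recognizing that the polynomial $f$ is the diagonal restriction of $d(x)$ and must therefore vanish for an inner derivation—together with the elementary observation that $h(y)-h(x)$ is a multiple of $x-y$, which provides the polynomial $g$ in (\ref{DIF}).
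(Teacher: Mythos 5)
Your argument is correct. Note that the paper does not prove this statement at all --- it is quoted from Lin--Zhang \cite[Theorem 2.3.3]{Lin2010} --- so there is no in-paper proof to compare against; the closest the paper comes is the preceding computation of $d(x^my^n)$ for a general derivation via the factorization $x^my^n=x\vdash\cdots\vdash 1\dashv\cdots\dashv x$, and the later Lemma~\ref{lem:inner_dider}, which derives the analogous closed form $Ad_{p}(h)=p\,(h(x,x)-h(y,y))$ for inner \emph{di}derivations. Your route is the natural one and is cleaner than working basis element by basis element: the single identity $ad_a(b)=b\dashv a-a\vdash b=b(x,y)\bigl(a(y,y)-a(x,x)\bigr)$ immediately shows that $ad_a$ depends only on the diagonal restriction $h(x)=a(x,x)$, that $f=d(x)\vdash 1=0$, and that $g$ is the quotient of $h(y)-h(x)$ by $x-y$ (which exists since $h(y)-h(x)$ vanishes on the diagonal); the converse follows by choosing $a=h$ and checking agreement on the basis $\{x^my^n\}$. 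The only bookkeeping point worth making explicit, which you do, is that the $f$ of Theorem~\ref{Char_der_Lin-1} is recovered as the diagonal restriction of $d(x)$, so its vanishing is forced for inner derivations rather than assumed.
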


\begin{remark}
    It is important to point out that
        \begin{enumerate}
	       \item $d(y)=d(1)\dashv x+ 1\dashv          d(x)$, because of $d(y)=d(1\dashv x)$. 
	       \item In the first implication of Theorem \ref{Char_der_Lin-1} $(x-y)g(x,y)=d(1)$ and $f(x)=d(x)\vdash 1$. This means that every derivation is determined by its action over $d(x)$ and  $d(1)$.
\end{enumerate}
\end{remark}
 
    Let us now characterize diderivations over 
    $K[x,y]$. To simplify the reasoning, let us introduce the following notation:
        \begin{equation*}
            x^m_{\vdash}=\underbrace{x\vdash x\vdash x\cdots \vdash x}_{m\text{-times}}
            \hspace{0.2 cm}
            \text{ and }
            \hspace{0.2 cm}
            y^n_{\dashv}=\underbrace{y\dashv y\dashv y\cdots \dashv y}_{n\text{-times}}
        \end{equation*}
 
    By definition we know that
        \begin{equation*}
            x^m_{\vdash}=y^{m-1}\vdash x
            \hspace{0.2 cm}
            \text{ and }
            \hspace{0.2 cm}
            y^n_{\dashv}=y\dashv x^{n-1}.
        \end{equation*}

\begin{theorem}
Let $\delta\in \pazocal{D}ider(K[x,y])$, then
    \begin{enumerate}
        \item $\delta(x^m)=\displaystyle\sum_{k=0}^{m-1}{(x^k_{\vdash})\vdash \delta(x)\dashv(y^{m-k-1}_{\dashv})}=\delta(x)\displaystyle\sum_{k=0}^{m-1}{x^ky^{m-k-1}}$.

        \item $\delta(y^n)=\displaystyle\sum_{k=0}^{n-1}{(x^k_{\vdash})\vdash \delta(y)\dashv(y^{n-k-1}_{\dashv})}=\delta(y)\displaystyle\sum_{k=0}^{n-1}{x^ky^{n-k-1}}$.
        
        \item $\delta(x^my^n)=\delta(x)y^n\displaystyle\sum_{k=0}^{m-1}{x^ky^{m-k-1}}+ \delta(y)x^m\displaystyle\sum_{k=0}^{n-1}{x^ky^{n-k-1}}$
    \end{enumerate} 
\end{theorem}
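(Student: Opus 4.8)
The plan is to establish all three identities by induction on the relevant exponents, using only the diderivation rule $\delta(a\ast b)=\delta(a)\dashv b+a\vdash\delta(b)$ together with a handful of elementary facts about the products on $K[x,y]$. First I would record these preliminary facts: from the defining formulas $f\vdash g=f(x,x)g(x,y)$ and $f\dashv g=f(x,y)g(y,y)$ one gets $x\vdash p=y\vdash p=x\cdot p(x,y)$ and $p\dashv x=p\dashv y=p(x,y)\cdot y$ for every polynomial $p$, since $x$ and $y$ agree on the diagonal; hence, as elements of $K[x,y]$, we have $x^{m}_{\vdash}=x^m$, $y^{n}_{\dashv}=y^n$, and $x^my^n=x^m\vdash y^n$; by the compatibility axiom~\eqref{D5} the bracketing in $x^{k}_{\vdash}\vdash z\dashv y^{\ell}_{\dashv}$ is immaterial, and this element equals $x^ky^\ell z$ for every $z\in K[x,y]$; and finally $\delta(1)=0$, which follows from the earlier lemma on the action of diderivations on bar units (or directly from $\delta(1)=\delta(1\vdash 1)=2\delta(1)$).

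For item (1) I would induct on $m$; the base case $m=1$ is immediate since $x^{0}_{\vdash}\vdash\delta(x)\dashv y^{0}_{\dashv}=1\vdash\delta(x)\dashv 1=\delta(x)$. For the inductive step, write $x^{m}_{\vdash}=x\vdash x^{m-1}_{\vdash}$ using associativity of $\vdash$ and apply the diderivation rule to obtain $\delta(x^m)=\delta(x)\dashv x^{m-1}_{\vdash}+x\vdash\delta(x^{m-1}_{\vdash})$. The first summand is exactly the $k=0$ term $x^{0}_{\vdash}\vdash\delta(x)\dashv y^{m-1}_{\dashv}$ by the preliminary facts; into the second summand I substitute the inductive hypothesis and use associativity in the form $x\vdash(x^{k}_{\vdash}\vdash z)=x^{k+1}_{\vdash}\vdash z$ to shift each index $k\mapsto k+1$, which produces precisely the terms $k=1,\dots,m-1$. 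Collecting terms yields the first equality, and the second is the identity $x^{k}_{\vdash}\vdash\delta(x)\dashv y^{m-1-k}_{\dashv}=x^ky^{m-1-k}\delta(x)$ term by term. Item (2) is proved in exactly the same way, now writing $y^{n}_{\dashv}=y\dashv y^{n-1}_{\dashv}$, applying the diderivation rule for $\dashv$, and using $y\vdash(x^{k}_{\vdash}\vdash z)=x^{k+1}_{\vdash}\vdash z$, which holds because $y\vdash x^{k}_{\vdash}=x^{k+1}_{\vdash}$.

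Finally, for (3) I would write $x^my^n=x^m\vdash y^n$ and apply the diderivation rule once more, $\delta(x^my^n)=\delta(x^m)\dashv y^n+x^m\vdash\delta(y^n)$; substituting the closed forms from (1) and (2) and using $p\dashv y^n=p(x,y)y^n$ and $x^m\vdash q=x^mq(x,y)$ gives the stated expression directly. I do not anticipate a genuine obstacle: the whole argument is a bookkeeping induction. The one point requiring care is the index shift in the inductive steps — verifying that the re-indexed family $k\mapsto k+1$ merges correctly with the separated $k=0$ term — together with being consistent about which diagonal substitution is applied when passing between the one-sided tower notation $x^{k}_{\vdash},y^{\ell}_{\dashv}$ and ordinary polynomial products. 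The structural facts that make everything go through are the associativity of each product and the coincidences $x\vdash p=y\vdash p$, $p\dashv x=p\dashv y$, which are exactly what lets the towers grow cleanly on one side.
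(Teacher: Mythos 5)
Your argument is correct and follows essentially the same route as the paper: induction on the exponent using the diderivation rule $\delta(a\ast b)=\delta(a)\dashv b+a\vdash\delta(b)$ for items (1) and (2), then a single application of the rule to $x^m\vdash y^n$ for item (3). The only cosmetic difference is that you peel the factor off the left ($x^m = x\vdash x^{m-1}_{\vdash}$) while the paper peels it off the right, and your preliminary reductions ($x\vdash p = y\vdash p = x\,p$, $p\dashv x = p\dashv y = p\,y$, $\delta(1)=0$) are exactly the facts the paper uses implicitly.
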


\begin{proof}

\begin{enumerate}
	\item To show $\delta(x^m)=\delta(x)\sum_{k=0}^{m-1}{x^ky^{m-k-1}}$ we proceed by induction.  If
	$m=2$,
	   \begin{align*}
	       \delta(x^2)&=\delta(x\vdash   x)=\delta(x)\dashv x+x\vdash\delta(x)\\
	   &=\delta(x)y+x\delta(x)=\delta(x)      (y+x)=\delta                         (x)\displaystyle\sum_{k=0}^{1}{x^ky^{1-k}}.
	   \end{align*}
    Let us assume the result for $m\in\mathbb{N}_{> 2}$ and let us show that it also holds for $m+1$. We have, 
	   \begin{align*}
	       \delta(x^{m+1})&=\delta(x^m\vdash x)=\delta(x^m)\dashv x+x^m\vdash\delta(x)\\
	       &=\left(\delta(x)\displaystyle\sum_{k=0}^{m-1}{x^ky^{m-k-1}}\right)y+x^m\delta(x)\\
	       &=\delta(x)\displaystyle\sum_{k=0}^{m-1}{x^ky^{m-k}}+x^m\delta(x)\\
	       &=\delta(x)\left( \displaystyle\sum_{k=0}^{m-1}{x^ky^{m-k}}+x^m\right)\\
	       &=\delta(x)\displaystyle\sum_{k=0}^{m}{x^ky^{m-k}},
	   \end{align*}
	\item The proof of this item follows the same lines of reasoning as in 1.
	\item All in all, we have
		\begin{align*}
	   \delta(x^my^n)&=\delta(x^m)\dashv y^n+x^m\vdash\delta(y^n)\\
	   &=\left(\delta(x)\displaystyle\sum_{k=0}^{m-1}{x^ky^{m-k-1}}\right)\dashv y^n+x^m\vdash \left(\delta(y)\displaystyle\sum_{k=0}^{n-1}{x^ky^{n-k-1}}\right)\\
        &=\delta(x)y^n\displaystyle\sum_{k=0}^{m-1}{x^ky^{m-k-1}}+\delta(y)x^m\displaystyle\sum_{k=0}^{n-1}{x^ky^{n-k-1}}
		\end{align*}
    \end{enumerate}
\end{proof}
 
    Let us finally consider the identity
        \begin{equation*}
            (x-y)\left(\displaystyle\sum_{k=0}^{n-1}{x^ky^{n-k-1}}\right)=x^n-y^n,
        \end{equation*}
    From the third part in the previous theorem 
        \begin{equation*}
            \delta(x^my^n)=\delta(x)\, y^n\, \left( \displaystyle\frac{x^m-y^m}{x-y} \right) + \delta(y)\, x^m  \left( \displaystyle \frac{x^n-y^n}{x-y} \right),
        \end{equation*}
 
    which shows the following characterization.

\begin{lemma}
    If $\delta\in \pazocal{D}ider(K[x,y])$, then
        \begin{equation*}
            \delta(h(x,y))=\frac{1}{x-y} \left[ \delta(x) \Big( h(x,y)-h(y,y)  \Big) - \delta(y) \Big( h(x,y)-h(x,x)  \Big) \right] 
        \end{equation*}
    for every $h(x,y)\in K[x,y]$.
\end{lemma}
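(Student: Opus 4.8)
The plan is to derive the closed formula directly from the monomial computation carried out in the preceding theorem, invoking only the linearity of $\delta$ and the elementary telescoping identity $(x-y)\sum_{k=0}^{n-1}x^ky^{n-k-1}=x^n-y^n$.

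First I would expand $h$ in the canonical basis: write $h(x,y)=\sum_{m,n}c_{mn}x^my^n$ with finitely many nonzero coefficients $c_{mn}\in K$. Combining the third item of the previous theorem with the telescoping identity above, for each basis monomial one has
\[
\delta(x^my^n)=\delta(x)\,y^n\,\frac{x^m-y^m}{x-y}+\delta(y)\,x^m\,\frac{x^n-y^n}{x-y},
\]
where each quotient is understood as the genuine polynomial $\sum_k x^ky^{\bullet-k-1}$, so that no real division is involved.

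Next, applying $\delta$ to $h$ and using linearity, I would group the two families of contributions:
\[
\delta(h(x,y))=\frac{\delta(x)}{x-y}\sum_{m,n}c_{mn}\,y^n(x^m-y^m)+\frac{\delta(y)}{x-y}\sum_{m,n}c_{mn}\,x^m(x^n-y^n).
\]
The key observation is that $\sum_{m,n}c_{mn}x^my^n=h(x,y)$, while $\sum_{m,n}c_{mn}y^{m+n}=h(y,y)$ and $\sum_{m,n}c_{mn}x^{m+n}=h(x,x)$ are precisely the specializations of $h$ obtained by setting $x=y$. Hence $\sum c_{mn}y^n(x^m-y^m)=h(x,y)-h(y,y)$ and $\sum c_{mn}x^m(x^n-y^n)=h(x,x)-h(x,y)$, and substituting these yields exactly the asserted identity, the minus sign in front of $\delta(y)$ being produced by the second of these equalities.

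Finally, I would remark that the right-hand side is a genuine element of $K[x,y]$: both $h(x,y)-h(y,y)$ and $h(x,y)-h(x,x)$ vanish on the diagonal $x=y$ and are therefore divisible by $x-y$ in $K[x,y]$, so the expression is well posed. There is essentially no serious obstacle here; the only point demanding care is the bookkeeping of the two substitutions (the factor $y^{m+n}$ collapsing to $h(y,y)$ and the factor $x^{m+n}$ collapsing to $h(x,x)$) together with the resulting signs between the two sums.
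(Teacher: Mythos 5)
Your proposal is correct and follows essentially the same route as the paper: the authors likewise deduce the lemma by combining the monomial formula $\delta(x^my^n)=\delta(x)\,y^n\,\frac{x^m-y^m}{x-y}+\delta(y)\,x^m\,\frac{x^n-y^n}{x-y}$ (obtained from the telescoping identity $(x-y)\sum_{k=0}^{n-1}x^ky^{n-k-1}=x^n-y^n$) with linearity over the basis $\{x^my^n\}$. Your sign bookkeeping, identifying $\sum c_{mn}x^m(x^n-y^n)=h(x,x)-h(x,y)=-\bigl(h(x,y)-h(x,x)\bigr)$, is exactly what produces the minus sign in the stated formula, and in fact you spell this step out more explicitly than the paper does.
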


\begin{theorem}\label{thm:dider_identity}
    Let $f(x,y),\; g(x,y)\in K[x,y]$, then the linear transformation $\delta$ of $K[x,y]$ induced by the identity
        \begin{equation*}
            \delta(x^my^n)=f(x,y)\, y^n \left( \displaystyle\frac{x^m-y^m}{x-y} \right) + g(x,y)\, x^m \left( \displaystyle \frac{x^n-y^n}{x-y} \right),
        \end{equation*}
    is a diderivation of $K[x,y]$. Furthermore, in this case $\delta(x)=f(x,y)$ and $\delta(y)=g(x,y)$.
\end{theorem}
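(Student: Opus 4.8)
The plan is to check the two claims in turn: the identification of $\delta$ on the generators is immediate, while the diderivation property is then a computation on the monomial basis.

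Evaluating the displayed formula at $(m,n)=(1,0)$, the second summand vanishes because of the factor $\tfrac{x^{0}-y^{0}}{x-y}=0$, while the first collapses to $f(x,y)\,\tfrac{x-y}{x-y}=f(x,y)$, so $\delta(x)=f(x,y)$; symmetrically $(m,n)=(0,1)$ gives $\delta(y)=g(x,y)$. In particular the formula is exactly the expression furnished by the preceding lemma once one sets $\delta(x)=f$ and $\delta(y)=g$, so it is convenient to record $\delta$ in the closed form $\delta(h)=\tfrac{1}{x-y}\bigl[f\,(h-h(y,y))-g\,(h-h(x,x))\bigr]$ for $h\in K[x,y]$; since $h-h(y,y)$ and $h-h(x,x)$ vanish on the diagonal $x=y$, these quotients are genuine polynomials, so $\delta$ is a well-defined linear map.

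Since $\delta$ is linear it suffices to verify $\delta(a*b)=\delta(a)\dashv b+a\vdash\delta(b)$ for $a=x^{m}y^{n}$ and $b=x^{p}y^{q}$, and for each of $*=\dashv$ and $*=\vdash$ separately. Here one uses the explicit products of $K[x,y]$ --- with $Q(y,y)$ the substitution $x\mapsto y$ and $Q(x,x)$ the substitution $y\mapsto x$, so that $P\dashv Q=P\cdot Q(y,y)$ and $P\vdash Q=P(x,x)\cdot Q$ --- whence on monomials $x^{m}y^{n}\dashv x^{p}y^{q}=x^{m}y^{\,n+p+q}$ and $x^{m}y^{n}\vdash x^{p}y^{q}=x^{\,m+n+p}y^{q}$. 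Writing $\langle r\rangle:=\sum_{k=0}^{r-1}x^{k}y^{r-1-k}=\tfrac{x^{r}-y^{r}}{x-y}$, so that $\delta(x^{m}y^{n})=f\,y^{n}\langle m\rangle+g\,x^{m}\langle n\rangle$, expanding the three terms $\delta(a*b)$, $\delta(a)\dashv b=\delta(a)\cdot b(y,y)$, $a\vdash\delta(b)=a(x,x)\cdot\delta(b)$ by these rules reduces the desired identity to a polynomial identity in the symbols $\langle r\rangle$, to be settled via the telescoping relations $\langle r+s\rangle=x^{s}\langle r\rangle+y^{r}\langle s\rangle=x^{r}\langle s\rangle+y^{s}\langle r\rangle$.

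The substance of the argument, and the step requiring the most care, is this bookkeeping: $\delta$ splits into an ``$f$-half'' and a ``$g$-half'', and the two products $\dashv,\vdash$ rearrange the $x$- and $y$-exponents on these halves asymmetrically, so one must check all four pairings and watch the spurious cross-terms cancel. To organise it I would introduce the divided-difference maps $\partial^{\dashv}h:=\tfrac{h-h(y,y)}{x-y}$ and $\partial^{\vdash}h:=\tfrac{h-h(x,x)}{x-y}$, so that $\delta(h)=f\,\partial^{\dashv}h-g\,\partial^{\vdash}h$, establish the twisted Leibniz rules $\partial^{\dashv}(uv)=(\partial^{\dashv}u)\,v+u(y,y)\,\partial^{\dashv}v$ and $\partial^{\vdash}(uv)=(\partial^{\vdash}u)\,v+u(x,x)\,\partial^{\vdash}v$, and then apply them to $a\dashv b=a\cdot b(y,y)$ and $a\vdash b=a(x,x)\cdot b$ (using that $b(y,y)$ is killed by $\partial^{\dashv}$ and $a(x,x)$ by $\partial^{\vdash}$) before comparing with $\delta(a)\dashv b=\delta(a)\cdot b(y,y)$ and $a\vdash\delta(b)=a(x,x)\cdot\delta(b)$. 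The delicate point --- and the one to pin down first --- is that $\partial^{\dashv}$ and $\partial^{\vdash}$ do not interact symmetrically with the two products, so the comparison has to be carried out product by product; this is precisely the step at which any compatibility that must hold between $f=\delta(x)$ and $g=\delta(y)$ would surface.
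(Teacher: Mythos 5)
Your outline correctly pins down $\delta(x)=f$ and $\delta(y)=g$, and the framework you set up (the operators $\partial^{\dashv},\partial^{\vdash}$ with their twisted Leibniz rules) is a sensible one --- but the proposal stops precisely at the step you yourself flag as delicate, and that step does not go through: carrying out the comparison for the product $\dashv$ forces the compatibility $f=g$, so the theorem as stated (with $f$ and $g$ independent) is false and no completion of your plan can prove it. Concretely, $x\dashv x=xy$, and the displayed formula gives $\delta(xy)=fy+gx$, while $\delta(x)\dashv x+x\vdash\delta(x)=f\cdot y+x\cdot f$; these agree only if $f=g$. In your own notation the discrepancy for $a\dashv b$ is exactly $a(x,x)\,(g-f)\,\partial^{\dashv}b$, because $\partial^{\vdash}\bigl(b(y,y)\bigr)=-\bigl(\partial^{\dashv}b-\partial^{\vdash}b\bigr)$ does not vanish. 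Two structural reasons make the constraint unavoidable: first, $x\vdash x-x\dashv x=x^{2}-xy$ lies in $(K[x,y])^{ann}$, which every diderivation must annihilate, yet the formula yields $\delta(x^{2}-xy)=(f-g)x$; second, since the formula gives $\delta(1)=0$ and $y=1\dashv x$, the diderivation identity would give $\delta(y)=\delta(1)\dashv x+1\vdash\delta(x)=\delta(x)$, so $g=f$ is necessary for \emph{any} diderivation of $K[x,y]$.

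Once $f=g$ is imposed your machinery closes up cleanly: then $\delta(h)=f\cdot\frac{h(x,x)-h(y,y)}{x-y}$, and for either product both sides of the defining identity reduce to $f\cdot\frac{a(x,x)b(x,x)-a(y,y)b(y,y)}{x-y}$. The corrected statement is therefore a one-parameter family, consistent with Lemma \ref{lem:inner_dider}, where indeed $Ad_{p}(x)=Ad_{p}(y)=(x-y)p$. For what it is worth, the paper states Theorem \ref{thm:dider_identity} without any proof, so there is no argument of the authors to compare your route against; the gap here is not one of method but of the claim itself, and finishing the computation you postponed would have uncovered the missing hypothesis $f=g$ rather than confirmed the theorem.
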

 
    Finally, like for diderivations, we have:

\begin{lemma}\label{lem:inner_dider}
    For every $p(x,y)\in K[x,y]$, 
        \begin{equation}\label{fin}
            Ad_{p(x,y)}(h(x,y))=p(x,y)\Big( h(x,x)-h(y,y) \Big),
        \end{equation}
    for all $h(x,y)\in K[x,y]$. Therefore $Ad_{p(x,y)} \big( K[x,y] \big) \subseteq (K[x,y])^{ann}$.
 
    Conversely, if $\delta$ is a linear transformation satisfying  identity (\ref{fin}), then $\delta =Ad_{p(x,y)}$. 
\end{lemma}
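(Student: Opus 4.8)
The plan is to compute $Ad_{p(x,y)}$ directly from its description as a combination of multiplicative operators together with the explicit formulas for the two products on $K[x,y]$. Recall that, by the convention adopted for dialgebras, the inner diderivation attached to an element $a$ is $Ad_a = R^{\vdash}_a - L^{\dashv}_a$, so that $Ad_{p(x,y)}(h(x,y)) = h(x,y)\vdash p(x,y) - p(x,y)\dashv h(x,y)$. Substituting the rules $f\vdash g = f(x,x)g(x,y)$ and $f\dashv g = f(x,y)g(y,y)$, the first summand becomes $h(x,x)\,p(x,y)$ and the second becomes $p(x,y)\,h(y,y)$; subtracting gives
\[
  Ad_{p(x,y)}(h(x,y)) = p(x,y)\bigl(h(x,x)-h(y,y)\bigr),
\]
which is exactly identity~(\ref{fin}). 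If one prefers, the same formula can be checked on the monomial basis $x^m y^n$ and extended by linearity, or cross-verified against Theorem~\ref{thm:dider_identity} after noting that $Ad_{p(x,y)}(x)=Ad_{p(x,y)}(y)=p(x,y)(x-y)$.

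For the inclusion $Ad_{p(x,y)}(K[x,y])\subseteq (K[x,y])^{ann}$, I would set $q(t)\coloneqq h(t,t)\in K[t]$ and observe that the factor $h(x,x)-h(y,y)$ equals $q(x)-q(y)$, which is divisible by $x-y$ in $K[x,y]$ by the usual one-variable factorization. Hence $Ad_{p(x,y)}(h(x,y))$ lies in the principal ideal $(x-y)K[x,y]$, and since it was shown earlier (via Lemma~\ref{annzb} and \cite[Lemma 1.2.5]{Lin2010}) that $(K[x,y])^{ann}$ coincides with the ideal generated by $x-y$, the claimed inclusion follows.

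The converse is then immediate: if a linear transformation $\delta$ of $K[x,y]$ satisfies $\delta(h(x,y)) = p(x,y)\bigl(h(x,x)-h(y,y)\bigr)$ for all $h(x,y)\in K[x,y]$, then by the computation of the first paragraph $\delta(h)=Ad_{p(x,y)}(h)$ for every $h$, and two linear maps that agree on all elements are equal; thus $\delta=Ad_{p(x,y)}$.

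I do not anticipate a genuine obstacle here: the argument is a short computation. The only point that demands care is bookkeeping the non-symmetric dialgebra convention $Ad_a = R^{\vdash}_a - L^{\dashv}_a$ (rather than the Leibniz-algebra sign $L^{\dashv}_a - R^{\vdash}_a$) and substituting into the correct product, so that the evaluations $h(x,x)$ and $h(y,y)$ appear on the correct sides with the correct sign.
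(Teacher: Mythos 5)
Your proposal is correct: the direct substitution of the product rules into $Ad_{p}=R^{\vdash}_{p}-L^{\dashv}_{p}$ yields identity~(\ref{fin}) at once, the divisibility of $h(x,x)-h(y,y)$ by $x-y$ gives the inclusion into $(K[x,y])^{ann}$, and the converse is indeed immediate. The paper states this lemma without proof, evidently regarding it as the routine computation you carried out, so your argument matches the intended one; your care with the sign convention $Ad_a=R^{\vdash}_a-L^{\dashv}_a$ is exactly the one point worth being explicit about.
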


\section{Conclusion and Further Work}\label{Sec:conclusions}

In this article we have investigated derivations and biderivations in the framework of dialgebras, introducing the new concept of diderivations and analyzing their interaction with derivations through multiplicative operators. A further central result is the construction of the algebra of biderivations and the proof that it naturally carries a Leibniz algebra structure, thereby extending the classical correspondence between associative, Lie, and Leibniz settings. In addition, we have presented a complete classification of the vector spaces of diderivations for dialgebras of dimensions two and three. These low-dimensional computations not only corroborate the general framework but also reveal structural patterns that inform the extension of the theory. The article also completes the classification of operators on the polynomial dialgebra $F[x,y]$, concluding with an explicit description of its diderivations.

Future research directions include a systematic study of cohomology theories of dialgebras with coefficients and their interplay with tensor and operadic categories. Another promising avenue is the relationship between derivations and deformation theory, which may yield new insights with potential applications to mathematical physics, particularly in connection with open problems surrounding the Yang–Baxter equation.

\begin{appendices}

\section{Python Code}\label{Latex_code}
 
    We have used the following code to solve system (\ref{System_dider}) when $\mathcal{D}=Dias_2^1$.

\begin{lstlisting}
import sympy as sp

# Define symbolic variables d11, d12, d21, d22
d11, d12, d21, d22 = sp.symbols('d11 d12 d21 d22')

# Define symbolic matrix D
D = sp.Matrix([[d11, d12], [d21, d22]])

# Define matrices for L and R (grouped into dictionaries for easy access)
L_matrices = {
    'L1pb': sp.Matrix([[1, 0], [0, 0]]),
    'L2pb': sp.Matrix([[0, 0], [1, 0]]),
    'L1bp': sp.Matrix([[1, 0], [0, 0]]),
    'L2bp': sp.Matrix([[0, 0], [0, 0]])
}

R_matrices = {
    'R1pb': sp.Matrix([[1, 0], [0, 1]]),
    'R2pb': sp.Matrix([[0, 0], [0, 0]]),
    'R1bp': sp.Matrix([[1, 0], [0, 0]]),
    'R2bp': sp.Matrix([[0, 0], [0, 0]])
}
# Compute compositions of D with L and R matrices
compositions = {}
for matrix_name, matrix in {**L_matrices, **R_matrices}.items():
    compositions[f"D_{matrix_name}"] = D * matrix
    compositions[f"{matrix_name}_D"] = matrix * D
# Define the right-hand side (RHS) of each equation
rhs_results = {
    'L1bp': d11 * L_matrices['L1pb'] + d21 * L_matrices['L2pb'],
    'L2bp': d12 * L_matrices['L1pb'] + d22 * L_matrices['L2pb'],
    'R1pb': d11 * R_matrices['R1bp'] + d21 * R_matrices['R2bp'],
    'R2pb': d12 * R_matrices['R1bp'] + d22 * R_matrices['R2bp']
}
\end{lstlisting}
\newpage
\begin{lstlisting}
# Solve the equations
solutions = {}
for matrix_type in ['L1bp', 'L2bp', 'R1pb', 'R2pb']:
    lhs_result = compositions[f"D_{matrix_type}"] - compositions[f"{matrix_type}_D"]
    rhs_result = rhs_results[matrix_type]
    solutions[matrix_type] = sp.solve(lhs_result - rhs_result)
# Print the solutions for each matrix type
for matrix_type, solution in solutions.items():
    print(f"Solution for {matrix_type}: {solution}")
\end{lstlisting}
 
    The code involves symbolic matrices depending on the dialgebra $Dias_{i}^{j}$. Furthermore, the approach employs symbolic variables to define matrices and solve the system algebraically. Let us explain its structure.
 
    We start defining the symbolic variables $d11$, \( d12 \), \( d21 \), and \( d22 \).

\begin{lstlisting}[language=Python]
import sympy as sp

d11, d12, d21, d22 = sp.symbols('d11 d12 d21 d22')
\end{lstlisting}
 
     These variables will be used to construct matrices and represent unknown values in the system of equations, so that we can manipulate them algebraically during computation.

\begin{lstlisting}[language=Python]
D = sp.Matrix([[d11, d12], [d21, d22]])
\end{lstlisting}
     
    Now, two dictionaries, \texttt{L\_matrices} and \texttt{R\_matrices}, are defined to store the matrices associated with the multiplicative operators $L_{e_i}^{\vdash}, L_{e_i}^{\dashv}, R_{e_{i}}^{\vdash}$ and $R_{e_i}^{\dashv}$. For instance, the matrix associated to $Dias_{2}^1$ are
    
\begin{lstlisting}[language=Python]
L_matrices = {
    'L1pb': sp.Matrix([[1, 0], [0, 0]]),
    'L2pb': sp.Matrix([[0, 0], [1, 0]]),
    'L1bp': sp.Matrix([[1, 0], [0, 0]]),
    'L2bp': sp.Matrix([[0, 0], [0, 0]])

}

R_matrices = {
    'R1pb': sp.Matrix([[1, 0], [0, 1]]),
    'R2pb': sp.Matrix([[0, 0], [0, 0]]),
    'R1bp': sp.Matrix([[1, 0], [0, 0]]),
    'R2bp': sp.Matrix([[0, 0], [0, 0]])
}
\end{lstlisting}
 
    \texttt{L\_matrices}: represents the left operators: \texttt{Lipb}=$L_{e_i}^{\dashv}$ and \texttt{Libp}=$L_{e_i}^{\vdash}$, and
 
    \texttt{R\_matrices}: represents the right operators: \texttt{Ripb}=$R_{e_i}^{\dashv}$ and \texttt{Ribp}=$R_{e_i}^{\vdash}$.
 
     For each matrix in both the \texttt{L\_matrices} and \texttt{R\_matrices} dictionaries, the code

\begin{lstlisting}[language=Python]
compositions = {}
for matrix_name, matrix in {**L_matrices, **R_matrices}.items():
    compositions[f"D_{matrix_name}"] = D * matrix
    compositions[f"{matrix_name}_D"] = matrix * D
\end{lstlisting}
 
    computes two compositions:

\begin{itemize}
    \item \( D \times M \) (for each matrix $M$ in \texttt{L\_matrices} and \texttt{R\_matrices}).
    \item \( M \times D \) (for each $M$ matrix in \texttt{L\_matrices} and \texttt{R\_matrices}),
\end{itemize}
 
    which are stored in the dictionary \texttt{compositions} with the following keys: \( D\_L1pb \), \( L1pb\_D \), \( D\_L2pb \), \( L2pb\_D \), \( D\_R1pb \), \( R1pb\_D \), \( D\_R2pb \) and \( R2pb\_D \).
 
    For each matrix type (\( L1bp \), \( L2bp \), \( R1pb \), \( R2pb \)), the RHS of (\ref{System_dider}) consists of a linear combination of matrices (such as \( L1pb \), \( L2pb \), etc.) multiplied by the symbolic variables \( d11 \), \( d12 \), \( d21 \), and \( d22 \). These expressions will be used in the subsequent step to form the full equation to be solved.

\begin{lstlisting}[language=Python]
rhs_results = {
    'L1bp': d11 * L_matrices['L1pb'] + d21 * L_matrices['L2pb'],
    'L2bp': d12 * L_matrices['L1pb'] + d22 * L_matrices['L2pb'],
    'R1pb': d11 * R_matrices['R1bp'] + d21 * R_matrices['R2bp'],
    'R2pb': d12 * R_matrices['R1bp'] + d22 * R_matrices['R2bp']
}
\end{lstlisting}
 
     For each matrix type (\( L1bp \), \( L2bp \), \( R1pb \), \( R2pb \)) we have defined the following expressions:

\begin{itemize}
    \item \textbf{Left-hand side (LHS)}: The composition \( D \times M - M \times D \) is calculated from the \texttt{compositions} dictionary.
    \item \textbf{Right-hand side (RHS)}: The corresponding RHS expression from the \texttt{rhs\_results} dictionary.
\end{itemize}
 
    The difference between LHS and RHS is passed to SymPy’s \texttt{solve()} function to find the solutions to the system. The results are stored in the \texttt{solutions} dictionary.

\begin{lstlisting}[language=Python]
solutions = {}
for matrix_type in ['L1bp', 'L2bp', 'R1pb', 'R2pb']:
    lhs_result = compositions[f"D_{matrix_type}"] - compositions[f"{matrix_type}_D"]
    rhs_result = rhs_results[matrix_type]
    solutions[matrix_type] = sp.solve(lhs_result - rhs_result)
\end{lstlisting}
 
    Finally, we iterate over the \texttt{solutions} dictionary and print the solutions for each matrix type. 
    
\begin{lstlisting}[language=Python]
for matrix_type, solution in solutions.items():
    print(f"Solution for {matrix_type}: {solution}")
\end{lstlisting}

\section{Three-Dimensional Dialgebras}
 
    From \cite{Abubakar2014} we have the following classification of the isomorphism classes of three dimensional dialgebras.

\begin{table}[ht]
\centering
\renewcommand{\arraystretch}{1.2} 
\setlength{\tabcolsep}{4pt} 
\small 
\begin{tabular}{|p{7cm}|p{7cm}|}
\hline
\textbf{$Dias_{3}^1$} & \textbf{$Dias_{3}^2$} \\
\hline
$e_1 \vdash e_2 = e_1$, $e_2 \vdash e_2 = e_2$, $e_3 \vdash e_3 = e_3$ & \\
$e_2 \dashv e_2 = e_2$, $e_3 \dashv e_3 = e_3$
&
$e_1 \vdash e_2 = e_1$, $e_2 \vdash e_2 = e_2$, $e_3 \vdash e_3 = e_3$ \\
$e_2 \dashv e_1 = e_1$, $e_2 \dashv e_2 = e_2$, $e_3 \dashv e_3 = e_3$ &\\
\hline
\textbf{$Dias_{3}^3$} & \textbf{$Dias_{3}^4$} \\
\hline
$e_1 \vdash e_2 = e_1$, $e_2 \vdash e_2 = e_2$, $e_3 \vdash e_3 = e_3$ & \\
$e_2 \dashv e_2 = e_2$, $e_3 \vdash e_1 = e_1$, $e_3 \dashv e_3 = e_3$
&
$e_1 \vdash e_3 = e_2$, $e_2 \vdash e_3 = e_2$, $e_3 \vdash e_3 = e_3$ \\
$e_3 \dashv e_3 = e_3$ & \\
\hline
\textbf{$Dias_{3}^5$} & \textbf{$Dias_{3}^6$} \\
\hline
$e_1 \vdash e_3 = e_2$, $e_2 \vdash e_3 = e_2$, $e_3 \vdash e_3 = e_3$ & \\
$e_3 \vdash e_1 = e_1 - e_2$, $e_3 \dashv e_3 = e_3$
&
$e_1 \vdash e_3 = e_2$, $e_2 \vdash e_3 = e_2$, $e_3 \vdash e_3 = e_3$ \\
$e_3 \vdash e_1 = e_1$, $e_3 \vdash e_2 = e_2$, $e_3 \dashv e_3 = e_3$ & \\
\hline
\textbf{$Dias_{3}^7$} & \textbf{$Dias_{3}^8$} \\
\hline
$e_1 \vdash e_3 = e_2$, $e_2 \vdash e_3 = e_2$, $e_3 \vdash e_3 = e_3$ & \\
$e_3 \vdash e_1 = e_2$, $e_3 \vdash e_2 = e_2$, $e_3 \dashv e_3 = e_3$
&
$e_1 \vdash e_3 = e_2$, $e_2 \vdash e_3 = e_2$, $e_3 \vdash e_3 = e_3$ \\
$e_1 \vdash e_3 = e_2$, $e_3 \vdash e_1 = e_1 - e_2$, $e_3 \dashv e_3 = e_3$ & \\
\hline
\textbf{$Dias_{3}^9$} & \textbf{$Dias_{3}^{10}$} \\
\hline
$e_3 \vdash e_1 = e_1$, $e_3 \vdash e_2 = e_2$, $e_3 \vdash e_3 = e_3$
&
$e_3 \vdash e_1 = e_1$, $e_3 \vdash e_3 = e_3$ \\
\hline
\textbf{$Dias_{3}^{11}$} & \textbf{$Dias_{3}^{12}$} \\
\hline
$e_3 \vdash e_1 = e_1$, $e_3 \vdash e_2 = e_2$, $e_3 \vdash e_3 = e_3$
&
$e_1 \vdash e_3 = e_1$, $e_2 \vdash e_3 = e_2$, $e_3 \vdash e_1 = e_1$, $e_3 \vdash e_3 = e_3$ \\
\hline
\textbf{$Dias_{3}^{13}$} & \textbf{$Dias_{3}^{14}$} \\
\hline
$e_1 \vdash e_3 = e_1$, $e_2 \vdash e_3 = e_2$, $e_3 \vdash e_1 = e_1$, $e_3 \vdash e_2 = e_2$, $e_3 \vdash e_3 = e_3$
&
$e_1 \vdash e_3 = e_1 + e_2$, $e_3 \vdash e_1 = e_1$, $e_3 \vdash e_2 = e_2$, $e_3 \vdash e_3 = e_3$ \\
\hline
\textbf{$Dias_{3}^{15}$} & \textbf{$Dias_{3}^{16}$} \\
\hline
$e_1 \vdash e_1 = e_2$, $e_3 \vdash e_3 = e_3$
&
$e_{1}\dashv e_{3}=e_{2}, 
e_{3}\vdash e_{1}=k e_{2}, 
e_{1}\dashv e_{1}=m e_{2}, 
e_{1}\vdash e_{3}=n e_{2}, 
e_{3}\dashv e_{1}=p e_{2}, 
e_{3}\vdash e_{3}=q e_{2}.$ \\
\hline
\textbf{$Dias_{3}^{17}$} & \\ 
\hline
$e_{1}\dashv e_{3}=e_{2},
e_{3}\vdash e_{1}=l e_{2},
e_{1}\dashv e_{1}=m e_{2},
e_{1}\vdash e_{3}=n e_{2}, 
e_{3}\dashv e_{1}=p e_{2}, 
e_{3}\vdash e_{3}=q e_{2}$
& \\
\hline
\end{tabular}
\caption{Three-dimensional complex dialgebras.}
\label{Three-dimensional-dias}
\end{table}

\section{Diderivations over $Dias_3^{16}$}\label{App:Dias_3^16}

Let
\[
D=\begin{pmatrix}
d_{11}&d_{12}&d_{13}\\
d_{21}&d_{22}&d_{23}\\
d_{31}&d_{32}&d_{33}
\end{pmatrix}.
\]
and let us fix scalar parameters: $k,m,n,p,q\in\mathbb{C}$, which come from the previous table.

The left operators:
\[
\begin{aligned}
&L_{1\mathrm{pb}}=\begin{pmatrix}0&0&0\\[2pt]0&0&1\\[2pt]0&0&0\end{pmatrix},\quad
L_{2\mathrm{pb}}=0,\quad
L_{3\mathrm{pb}}=\begin{pmatrix}0&0&0\\[2pt]k&0&0\\[2pt]0&0&0\end{pmatrix},\\[6pt]
&L_{1\mathrm{bp}}=\begin{pmatrix}0&0&0\\[2pt]m&0&n\\[2pt]0&0&0\end{pmatrix},\quad
L_{2\mathrm{bp}}=0,\quad
L_{3\mathrm{bp}}=\begin{pmatrix}0&0&0\\[2pt]p&0&q\\[2pt]0&0&0\end{pmatrix},
\end{aligned}
\]

and the right operators:
\[
\begin{aligned}
&R_{1\mathrm{pb}}=\begin{pmatrix}0&0&0\\[2pt]0&0&k\\[2pt]0&0&0\end{pmatrix},\quad
R_{2\mathrm{pb}}=0,\quad
R_{3\mathrm{pb}}=\begin{pmatrix}0&0&0\\[2pt]1&0&0\\[2pt]0&0&0\end{pmatrix},\\[6pt]
&R_{1\mathrm{bp}}=\begin{pmatrix}0&0&0\\[2pt]m&0&p\\[2pt]0&0&0\end{pmatrix},\quad
R_{2\mathrm{bp}}=0,\quad
R_{3\mathrm{bp}}=\begin{pmatrix}0&0&0\\[2pt]n&0&q\\[2pt]0&0&0\end{pmatrix},
\end{aligned}
\]

together with eqution (\ref{System_dider}) give rise to the six equalities:
\begin{align}
D L_{1\mathrm{bp}}-L_{1\mathrm{bp}}D&=d_{11}L_{1\mathrm{pb}}+d_{21}L_{2\mathrm{pb}}+d_{31}L_{3\mathrm{pb}},\label{eq:L1}\\
D L_{2\mathrm{bp}}-L_{2\mathrm{bp}}D&=d_{12}L_{1\mathrm{pb}}+d_{22}L_{2\mathrm{pb}}+d_{32}L_{3\mathrm{pb}},\label{eq:L2}\\
D L_{3\mathrm{bp}}-L_{3\mathrm{bp}}D&=d_{13}L_{1\mathrm{pb}}+d_{23}L_{2\mathrm{pb}}+d_{33}L_{3\mathrm{pb}},\label{eq:L3}\\
D R_{1\mathrm{pb}}-R_{1\mathrm{pb}}D&=d_{11}R_{1\mathrm{bp}}+d_{21}R_{2\mathrm{bp}}+d_{31}R_{3\mathrm{bp}},\label{eq:R1}\\
D R_{2\mathrm{pb}}-R_{2\mathrm{pb}}D&=d_{12}R_{1\mathrm{bp}}+d_{22}R_{2\mathrm{bp}}+d_{32}R_{3\mathrm{bp}},\label{eq:R2}\\
D R_{3\mathrm{pb}}-R_{3\mathrm{pb}}D&=d_{13}R_{1\mathrm{bp}}+d_{23}R_{2\mathrm{bp}}+d_{33}R_{3\mathrm{bp}}.\label{eq:R3}
\end{align}

\begin{remark}
Since $L_{2\mathrm{pb}}=R_{2\mathrm{pb}}=0$, the coefficients $d_{21},d_{23}$ never occur. In other words, they are always free.
\end{remark}

An independent set of scalar equations extracted from \eqref{eq:L1}--\eqref{eq:R3} is:
\begin{align}
&\textbf{(i) From \eqref{eq:L1}:}\quad
m\,d_{12}=0,\;\; n\,d_{12}=0,\;\; m\,d_{32}=0,\;\; n\,d_{32}=0, \label{eq:L1a}\\
&\hspace{1cm}(-d_{11}+d_{22})\,m - d_{31}\,n = d_{31}\,k, \label{eq:L1b}\\
&\hspace{1cm}-d_{13}\,m + (d_{22}-d_{33})\,n = d_{11}. \label{eq:L1c}
\\[4pt]
&\textbf{(ii) From \eqref{eq:L2}:}\quad k\,d_{32}=0,\qquad d_{12}=0. \label{eq:L2a}
\\[4pt]
&\textbf{(iii) From \eqref{eq:L3}:}\quad
(-d_{11}+d_{22})\,p - d_{31}\,q = d_{33}\,k,\label{eq:L3a}\\
&\hspace{1cm}-d_{13}\,p + (d_{22}-d_{33})\,q = d_{13}. \label{eq:L3b}
\\[4pt]
&\textbf{(iv) From \eqref{eq:R1}:}\quad
-d_{31}\,k = d_{11}\,m + d_{31}\,n, \label{eq:R1a}\\
&\hspace{1cm}k\,(d_{22}-d_{33}) = d_{11}\,p + d_{31}\,q. \label{eq:R1b}
\\[4pt]
&\textbf{(v) From \eqref{eq:R2}:}\quad d_{12}\,m + d_{32}\,n = 0,\qquad d_{12}\,p + d_{32}\,q=0.\label{eq:R2a}
\\[4pt]
&\textbf{(vi) From \eqref{eq:R3}:}\quad
-d_{11}+d_{22}= d_{13}\,m + d_{33}\,n,\hspace{0.2 cm}
(p+1)\,d_{13} + q\,d_{33}=0. \label{eq:R3a}
\end{align}

\subsubsection*{Initial reduction}
From Equations \eqref{eq:L2a} and \eqref{eq:R3a}: $d_{12}=0$ and, except in the extreme $m=n=k=q=0$, also $d_{32}=0$. If $m\neq 0$, then combining \eqref{eq:L1b} and \eqref{eq:R1a} yields $d_{22}=0$.

We work with the $5\times5$ linear subsystem in the unknowns

\begin{equation*}
    \mathbf x=(d_{11},d_{13},d_{22},d_{31},d_{33})^\top,
\end{equation*}
extracted from (R1a), (R1b), (L3a), (R3a.2), (R3a.1):

\begin{equation}\label{F.1}
M=
\begin{pmatrix}
m & 0 & 0 & (n+k) & 0\\
-p & 0 & -k & -q & k\\
-p & 0 & p & -q & -k\\
0 & (p+1) & 0 & 0 & q\\
-1 & -m & 1 & 0 & -n
\end{pmatrix}.
\end{equation}

Let us define the two key factors
\begin{equation}\label{F.2}
\boxed{\ \Delta_1:=-k-mq+np\ },\qquad
\boxed{\ \Delta_2:=(k+n)(p+1)-mq\ }.
\end{equation}

Row 4 is particularly simple:
\begin{equation} \label{A.2}
    (p+1)\,d_{13} + q\,d_{33}=0.
\end{equation}

Permuting rows/columns so that the $d_{13}$ equation (row 4) and the
$d_{13}$ variable (column 2) come first. Then $M$ has the following block form

\begin{equation}\label{A.3}
M=\begin{pmatrix}
A & B\\ C & D
\end{pmatrix},
\qquad
A=(p+1)\in \mathbb{C}.
\end{equation}

Here $A$ is $1\times 1$, $B$ is $1\times 4$, $C$ is $4\times 1$, and
$D$ is $4\times 4$ (explicit expressions are immediate from (\ref{F.1})).

\subsubsection*{Schur complement}

Taking into account the decomposition (\ref{A.3}), it is well-known that if $A$ is invertible, then
\begin{equation}\label{eq:det-M}
    \det M = \det A\cdot \det(D - C A^{-1} B).
\end{equation}

Here $A=(p+1)$, so provided $p\neq -1$ we may form the
Schur complement
\[
\widetilde A = D - \tfrac{1}{p+1}\, C B.
\]

Computing $CB$ explicitly gives
\[
CB=
\begin{pmatrix}
0&0&0&0\\
0&0&0&0\\
0&0&0&0\\
0&0&0&-m q
\end{pmatrix},
\]
so that
\[
\widetilde A=
\begin{pmatrix}
m & 0 & (n+k) & 0\\
-p & -k & -q & k\\
-p & \;\;p & -q & -k\\
-1 & \;\;1 & 0 & -n + \tfrac{m q}{p+1}
\end{pmatrix}.
\]

Multiplying the last row by $(p+1)$ yields the cleaner form
\begin{equation}\label{F.4}
\widetilde A=
\begin{pmatrix}
m & 0 & (n+k) & 0\\
-p & -k & -q & k\\
-p & \;\;p & -q & -k\\
-(p+1) & (p+1) & 0 & m q - n(p+1)
\end{pmatrix}.
\end{equation}

Thus
\begin{equation}\label{A.8}
\det M \doteq \det \widetilde A,
\end{equation}
where $\doteq$ means equality up to a nonzero unit.

Apply the row operations
\[
R_2\!\leftarrow\! mR_2+pR_1,\qquad
R_3\!\leftarrow\! mR_3+pR_1,\qquad
R_4\!\leftarrow\! mR_4+(p+1)R_1,
\]
we obtain
\[
\widetilde A'=
\begin{pmatrix}
m & 0 & (n+k) & 0\\
0 & -km & -mq+(k+n)p & km\\
0 & mp  & -mq+(k+n)p & -km\\
0 & m(p+1) & -mq+(k+n)(p+1) & m^2 q
\end{pmatrix}.
\]
Hence the first column has pivot \(m\) and zeros below, so
\begin{equation}\label{F.8}
\det\widetilde A
= m\cdot \det C,\hspace{0.2 cm}
C=
\begin{pmatrix}
-km & -mq+(k+n)p & km\\
mp  & -mq+(k+n)p & -km\\
m(p+1) & -mq+(k+n)(p+1) & m^2 q
\end{pmatrix}.
\end{equation}

Let \(S:=-mq+(k+n)p\). Then the middle column of \(C\) is \((S,\ S,\ S+(k+n))^\top\).
Using \(R_2\leftarrow R_2-R_1\), \(R_3\leftarrow R_3-R_1\), and expanding cofactors along the zero
at $C_{22}$ yields
\begin{align*}
\det C
&=(mp+km)\!\left(-\det\!\begin{pmatrix}S&km\\ k+n&m^2q-km\end{pmatrix}\right)\\
& \hspace{0.4 cm} -2km\!\left(-\det\!\begin{pmatrix}-km&S\\ m(p+1)+km&k+n\end{pmatrix}\right).
\end{align*}

Evaluating the \(2\times2\) minors and simplifying with \(S=-mq+(k+n)p\) one finds
\begin{equation}\label{F.10}
\boxed{\ \det C \doteq \Delta_2\cdot \Delta_1\ }.
\end{equation}

Combining (\ref{eq:det-M}), \eqref{F.8} and \eqref{F.10}:
\[
\boxed{\ \det M \doteq m\,\Delta_2\,\Delta_1\ }.
\]

\begin{remark}
The vanishing locus of the determinant, and hence the rank drops giving rise to extra solution
dimensions, is precisely
\[
\{\,m=0\,\}\ \cup\ \{\,\Delta_2=0\,\}\ \cup\ \{\,\Delta_1=0\,\}.
\]
\end{remark}

\subsection*{Resolution with $m\neq 0$}

\subsubsection*{Case A: trivial solution (generic)}
If
\[
\Delta_1\neq 0\quad\text{and}\quad \Delta_2\neq 0,
\]
then the only possibility is
\[
d_{11}=d_{12}=d_{13}=d_{22}=d_{31}=d_{32}=d_{33}=0,
\]
with \(d_{21},d_{23}\) always free.

\subsubsection*{Case B: non-trivial family when \(\Delta_1=0\)}
Assume
\[
\Delta_1=-k-mq+np=0,\qquad m\neq 0.
\]
From \eqref{eq:R1a}: \(d_{11}= -\dfrac{n+k}{m}\,d_{31}\).
From \eqref{eq:R1b} and \(\Delta_1=0\):
\(d_{33}=\dfrac{p+1}{m}\,d_{31}\).
Using \((p+1)d_{13}+q d_{33}=0\):
\(d_{13}= -\dfrac{q}{m}\,d_{31}\).
With parameter \(t:=d_{33}\):
\[
\boxed{\
(d_{11},d_{12},d_{13},d_{22},d_{31},d_{32},d_{33})
= t\!\left(-\frac{k+n}{p+1},\,0,\,\frac{k-np}{m(p+1)},\,0,\,\frac{m}{p+1},\,0,\,1\right).
}
\]

\subsubsection*{Case C: non-trivial family when \(\Delta_2=0\)}
Assume
\[
\Delta_2=(k+n)(p+1)-m q=0,\qquad m\neq 0.
\]
From \eqref{eq:R1a}: \(d_{11}= -\dfrac{n+k}{m}\,d_{31}\).
From \eqref{eq:R1b} and \(\Delta_2=0\): \(d_{33}= -\dfrac{k+n}{m k}\,d_{31}\).
Using \((p+1)d_{13}+q d_{33}=0\) and \(\Delta_2=0\): \(d_{13}= -\dfrac{k+n}{m}\,d_{33}\).
With parameter \(t:=d_{33}\):
\[
\boxed{\
(d_{11},d_{12},d_{13},d_{22},d_{31},d_{32},d_{33})
= t\!\left(k,\,0,\,-\frac{k+n}{m},\,0,\,-\frac{k\,m}{k+n},\,0,\,1\right),
}
\]
assuming \(k+n\neq 0\).

\subsubsection*{Case D: $m\neq 0$ and $\Delta_1=\Delta_2=0$}

Necessarily
\[
n=-k(p+2),\qquad q=-\frac{k(p+1)^2}{m},\qquad k+n=-k(p+1),
\]
and, from the initial reduction for $m\neq 0$,
\[
d_{12}=d_{22}=d_{32}=0.
\]
The solution set is
\[
(d_{11},d_{13},d_{31},d_{33})
=\Big(\tfrac{k(p+1)}{m}\,d_{31},\ \tfrac{k(p+1)}{m}\,d_{33},\ d_{31},\ d_{33}\Big),
\]
\subsubsection*{Degenerate branches ($m=0$)}
When $m=0$ we do \emph{not} force $d_{22}=0$. The governing block is
\[
(n+k)d_{31}=0,\quad -d_{11}+d_{22}=n\,d_{33},\quad
k(d_{22}-d_{33})=d_{11}p+d_{31}q,
\]
\[
p(-d_{11}+d_{22})-d_{31}q=d_{33}k,
\]
together with \eqref{eq:R3a}(2) and \eqref{eq:L3b}. Also $d_{12}=0$ and
$d_{32}$ is forced to $0$ unless $n=k=q=0$.

\begin{itemize}[leftmargin=1.5em]
\item \textbf{$m=0$, $n+k\neq 0$}: then $d_{31}=0$.
\begin{itemize}
\item[-] If $k\neq n p$ (generic in this branch): $\dim=2$ when $q\neq 0$; if $q=0$ still $\dim=2$ generically.
\item[-] If $k=n p$: one extra free parameter appears in $(d_{11},d_{33})$; hence $\dim=3$. If moreover $p=-1$ and $q=0$ (so \eqref{eq:R3a}(2) vanishes), add $+1$: $\dim=4$.
\end{itemize}

\item \textbf{$m=0$, $n+k=0$}: then $d_{31}$ is not constrained by \eqref{eq:R1a}.
\begin{itemize}
\item[-] For $q\neq 0$ (so $d_{32}=0$): generically $\dim=3$ (one free parameter in $(d_{11},d_{13},d_{22},d_{33})$ plus $d_{21},d_{23}$). If additionally $k=-1$ (i.e., $n=1$), the relation $(1-n)d_{22}=0$ becomes void and gives one more degree: $\dim=4$.
\item[-] For $q=0$: \eqref{eq:R3a}(2) drops; generically this adds $+1$ vs.\ the previous line, so $\dim=4$ (and can rise on further coincidences).
\end{itemize}

\item \textbf{Extreme $m=n=k=q=0$}:
Here $d_{12}=0$, $d_{32}$ is free, and from \eqref{eq:L1c} one gets $d_{11}=0$, then from \eqref{eq:R3a}(1) $d_{22}=0$.
\begin{itemize}
\item[-] If $p\neq -1$: the free directions are $d_{21}, d_{23}, d_{31}, d_{32}, d_{33}$, so $\dim=5$.
\item[-] If $p=-1$: $(p+1)d_{13}=0$ vanishes and \eqref{eq:L3b} becomes tautological, so $d_{13}$ is also free: $\dim=6$.
\end{itemize}
\end{itemize}

\end{appendices}


\bigskip
\noindent\textsc{Gabriel Gustavo Restrepo-S\'anchez}\\
Instituto de Matem\'aticas, Universidad de Antioquia, Medell\'in, Colombia.\\
\textit{Email address}: \texttt{ggustavo.restrepo@udea.edu.co}

\medskip
\noindent\textsc{Jos\'e Gregorio Rodr\'iguez-Nieto}\\
Departamento de Matem\'aticas, Universidad Nacional de Colombia, Medell\'in, Colombia.\\
\textit{Email address}: \texttt{jgrodrig@unal.edu.co}

\medskip
\noindent\textsc{Olga Patricia Salazar-D\'iaz}\\
Departamento de Matem\'aticas, Universidad Nacional de Colombia, Medell\'in, Colombia.\\
\textit{Email address}: \texttt{opsalazard@unal.edu.co}

\medskip
\noindent\textsc{Andr\'es Sarrazola-Alzate}\\
Departamento de Ciencias B\'asicas e Ingenier\'ia, Universidad EIA, Envigado, Colombia.\\
\textit{Email address}: \texttt{andres.sarrazola@eia.edu.co}

\medskip
\noindent\textsc{Ra\'ul Vel\'asquez}\\
Instituto de Matem\'aticas, Universidad de Antioquia, Medell\'in, Colombia.\\
\textit{Email address}: \texttt{raul.velasquez@udea.edu.co}


\begin{thebibliography}{99}

\bibitem{Abubakar2014} 
M. Abubakar, I. S. Rakhimov, and I. M. Rikhsiboev, 
On derivations of low dimensional associative dialgebras, 
\emph{AIP Conf. Proc.} \textbf{1602} (2014), 730--735. 
doi:10.1063/1.4882566

\bibitem{Frabetti2001} 
A. Frabetti, 
Dialgebra (co)homology with coefficients, 
in: \emph{Dialgebras and Related Operads}, 
Lecture Notes in Math., vol.~1763, Springer, Berlin, 2001, 67--103. 
doi:10.1007/3-540-45328-8\_3

\bibitem{Kinyon2007}
M. K. Kinyon, 
Leibniz algebras, Lie racks, and digroups, 
\emph{J. Lie Theory} \textbf{17} (2007), 99--114.

\bibitem{Lin2010} 
L. Lin and Y. Zhang, 
$F[x,y]$ as a dialgebra and a Leibniz algebra, 
\emph{Commun. Algebra} \textbf{38} (2010), no.~9, 3417--3447. 
doi:10.1080/00927870903164677

\bibitem{Loday1993} 
J.-L. Loday, 
Une version non commutative des algèbres de Lie: les algèbres de Leibniz, 
\emph{Les rencontres physiciens-mathématiciens de Strasbourg – RCP25} \textbf{44} (1993), 127--151.

\bibitem{Loday2001} 
J.-L. Loday, 
Dialgebras, 
in: \emph{Dialgebras and Related Operads}, 
Lecture Notes in Math., vol.~1763, Springer, Berlin, 2001, 7--66. 
doi:10.1007/3-540-45328-8\_2

\bibitem{LodayPirashvili1993} 
J.-L. Loday and T. Pirashvili, 
Universal enveloping algebras of Leibniz algebras and (co)homology, 
\emph{Math. Ann.} \textbf{296} (1993), 139--158. 
doi:10.1007/BF01445099

\bibitem{Ongay2007} 
F. Ongay, 
$\Phi$-dialgebras and a class of matrix ``coquecigrues'', 
\emph{Canad. Math. Bull.} \textbf{50} (2007), no.~1, 126--137. 
doi:10.4153/CMB-2007-013-6

\bibitem{Rakhimov2017} 
I. S. Rakhimov, K. K. Masutova, and B. A. Omirov, 
On derivations of semisimple Leibniz algebras, 
\emph{Bull. Malays. Math. Sci. Soc.} \textbf{40} (2017), 295--306. 
doi:10.1007/s40840-015-0113-5

\bibitem{Rikhsiboev2014} 
I. M. Rikhsiboev, I. S. Rakhimov, and W. Basri, 
Diassociative algebras and their derivations, 
\emph{J. Phys. Conf. Ser.} \textbf{553} (2014), 012006. 
doi:10.1088/1742-6596/553/1/012006

\bibitem{Velasquez2009} 
R. Velásquez and R. Felipe, 
Split dialgebras, split quasi-Jordan algebras and regular elements, 
\emph{J. Algebra Appl.} \textbf{8} (2009), no.~2, 191--218. 
doi:10.1142/S0219498809003278

\end{thebibliography}
\end{document}